\newtheorem{defn}{Definition}[section]
\newtheorem{thm}{Theorem}[section]
\newtheorem{prop}{Proposition}[section]
\newtheorem{rmk}{Remark}[section]
\newtheorem{lma}{Lemma}[section]
\newtheorem{exm}{Example}[section]
\newcommand{\E}{\mathbb{E}}
\def\N{{\rm I\kern-0.16em N}}
\def\R{{\rm I\kern-0.16em R}}
\def\E{{\rm I\kern-0.16em E}}
\def\P{{\rm I\kern-0.16em P}}
\def\F{{\rm I\kern-0.16em F}}
\def\B{{\rm I\kern-0.16em B}}
\def\C{{\rm I\kern-0.46em C}}
\def\G{{\rm I\kern-0.50em G}}
\title[Mean number of real zeros of random trigonometric polynomials]{Universality of the mean number of real zeros of random trigonometric polynomials under a weak Cramer condition}
\author{J\"urgen Angst}
\address{IRMAR, Universit\'e de Rennes 1} 
\email{jurgen.angst@univ-rennes1.fr}
\urladdr{http://www.angst.fr}
\author{Guillaume Poly} 
\address{IRMAR, Universit\'e de Rennes 1} 
\email{guillaume.poly@univ-rennes1.fr} 
\urladdr{https://sites.google.com/site/guillaumejpoly/home}
\begin{document}

\begin{abstract}
We investigate the mean number of real zeros over an interval $[a,b]$ of a random trigonometric polynomial of the form $\sum_{k=1}^n a_k \cos(kt)+b_k \sin(kt)$ where the coefficients are i.i.d. random variables. Under mild assumptions on the law of the entries, we prove that this mean number is asymptotically equivalent to $\frac{n(b-a)}{\pi\sqrt{3}}$ as $n$ goes to infinity, as in the known case of standard Gaussian coefficients. Our principal requirement is a new Cramer type condition on the characteristic function of the entries which does not only hold for all continuous distributions but also for discrete ones in a generic sense. To our knowledge, this constitutes the first universality result concerning the mean number of zeros of random trigonometric polynomials. Besides, this is also the first time that one makes use of the celebrated Kac-Rice formula not only for continuous random variables as it was the case so far, but also for discrete ones. Beyond the proof of a non asymptotic version of Kac-Rice formula, our strategy consists in using suitable small balls estimates and Edgeworth expansions for the Kolmogorov metric under our new weak Cramer condition, which both constitute important byproducts of our approach.
\end{abstract}

\keywords{Random trigonometric polynomial, Kac-Rice formula, universality, small ball estimate, Edgeworth expansion}
\subjclass{42A05, 60G50, 60G99}

\maketitle
\setcounter{tocdepth}{2}
\tableofcontents

\section[Introduction]{Introduction}

The study of roots and level lines of random functions is a wide topic, at the crossroad between algebra, analysis and probability, which has been studied extensively since the mid 20th century. It appears for the first time in the seminal paper \cite{little1} where the authors consider the case of random algebraic polynomials with uniform, Gaussian, and discrete entries. Since this pionneering work, lots of developments were made to estimate the asymptotic mean number of real roots of such polynomials, as their degree goes to infinity, under various assumptions on the law of the random entries, see for example \cite{kac1, kac2, erdos, ibragimov1,farah1, kostlan} and the references therein. These breakthroughs culminate with the recent paper \cite{tao} where Tao and Vu establish the universality of the correlation functions of these zeros i.e. they show that, under mild moment hypotheses, the asymptotics of the correlation functions coincides with the one of the Gaussian coefficients case. This implies the universality of the mean number of real zeros, see also \cite{nguyen}. This last result is of course to be compared with similar universality results regarding the asymptotic behaviour of the spectrum of random matrices which is another important instance of roots system of random functions, see e.g. \cite{guionnet} for a nice survey.  
\par
\medskip
Among the class of random functions, of particular interest are random trigonometric polynomials, e.g. $P_n(t)=\sum a_n \cos (n t)$ or $Q_n(t)=\sum a_n \cos (nt)+b_n\sin (nt)$, where $(a_n)_{n \geq 1}$ and $(b_n)_{n\ge 1}$ are independent and identically distributed random variables with common distribution, since the distribution of the zeros of such polynomials occurs in a wide range of problems in science and engineering, from nuclear physics to statistical mechanics and including noise theory or cosmology. The asymptotics of the mean number of real zeros of random trigonometric polynomials with Gaussian coefficients was first explicited by Dunnage in \cite{dunnage}, where it is shown that this number is asymptotically proportional to the degree of the considered polynomial. In the Gaussian case again, the variance of this number of real zeros was then investigated by several authors, see for example \cite{bogomolny1996quantum,faravar} until Granville and Wigman showed in \cite{wigman} that it is also proportional to the degree of the considered polynomial, hence exhibiting a concentration phenomenon. They could also prove a central limit Theorem for the fluctuations of the number of reals roots around its mean by subtely quantifying the decorrelation of this number of zeros in disjoint intervals. Note that a direct but highly computational approach is also possible, see \cite{su2012asymptotics}. Recently, in the groundbreaking articles \cite{azais,azais2014clt}, Aza\"is, Dalmao and Le\'on not only provided several new central limit criteria but also connected these questions to the so-called Malliavin-Stein theory hence bringing a new method to tackle these problems. Let us finally mention another recent direction of research concerning random trigonometric polynomials. Under an appropriate rescaling, it can be shown that the above sequences of random trigonometric polynomials $P_n(t)$ or $Q_n(t)$ converge in distribution towards some Gaussian processes, as their degree $n$ tends to infinity. In the recent preprint \cite{AZ15}, the authors show that, if the law of the random coefficients admits a smooth density, then the distribution of the number of real zeros of the rescaled polynomials on a fixed interval converges in distribution towards the distribution of the number of real zeros of the limit Gaussian process in the same interval.

\noindent
To our knowledge, the study of the distribution of real zeros of random trigonometric polynomial has only been conducted in the case of Gaussian coefficients and all its possible variations (non i.i.d. Gaussian, correlated Gaussian etc...) or the case of smooth coefficients in the last mentioned preprint. In particular, there is no result in the litterature concerning this distribution in the case of discrete random coefficients, as there is no result concerning the asymptotic mean number of real zeros for general entries. This is precisely the main goal of the present article, where we prove that for a large class of entries, the mean number of zeros behaves asymptotically as in the Gaussian setting. Namely, we shall establish the following result.
 
\begin{thm}[Theorem \ref{theo.main} below]\label{theo.universalite}
Let $a_k$ and $b_k$ be independent and identically distributed random variables, centered with variance one, admitting a finite moment of order 5 and belonging to the class $\mathcal C(1,b)$ explicited in Section 2 below, for some $0\leq b<1$. For any sequence $(\theta_k)_{k\ge 1}$ of real numbers, we consider
\[
P_n(t):= \sum_{k=1}^n a_k \cos \left(k t +\theta_k\right)+b_k \sin \left (k t+\theta_k\right).
\] 
We denote by $\mathcal Z_n([a,b])$ the number of real zeros of $P_n$ in the interval $[a, b]$. Then, as $n$ goes to infinity, the expected number of zero satisfies
\[
\lim_{n \to +\infty} \frac{\mathbb E [\mathcal Z_n([a,b])]}{n} = \frac{b-a}{\pi \sqrt{3}}.
\]
\end{thm}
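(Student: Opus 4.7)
The plan is to carry out a Kac-Rice computation that is robust enough to survive the absence of a density for the joint law of $(P_n(t), P_n'(t))$, which is the new feature in the discrete setting. Concretely, I would first establish a regularized, non-asymptotic Kac-Rice type identity
\[
\mathbb{E}[\mathcal{Z}_n([a,b])] = \lim_{\varepsilon \to 0^+}\frac{1}{2\varepsilon}\int_a^b \mathbb{E}\bigl[|P_n'(t)|\,\mathbf{1}_{\{|P_n(t)|<\varepsilon\}}\bigr]\, dt,
\]
obtained by integrating the Banach indicatrix representation of the zero counting measure of $P_n$ and justifying the limit by a uniform integrability argument combining $L^p$ control of $P_n'$ with a crude small ball bound on $P_n(t)$. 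The advantage of this formulation is that it never uses the existence of a density for $(P_n(t),P_n'(t))$ at a finite $n$, so that the discrete case enters the same framework as the continuous one.

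Next I would analyze the inner expectation by freezing $t$ and viewing the bivariate sum $\widetilde X_n(t):=(P_n(t)/\sqrt{n},\,P_n'(t)/n^{3/2})$ as a normalized sum of i.i.d.\ two-dimensional random vectors, whose covariance matrix converges to $\mathrm{diag}(1/2,1/6)$ away from the phase resonances. The key step is to produce an Edgeworth expansion for the law of $\widetilde X_n(t)$ in Kolmogorov distance, with explicit error in $n^{-1/2}$, under the weak Cramer condition $\mathcal C(1,b)$: this condition is designed precisely to provide the decay of the characteristic function of the increments necessary to extract a bona fide smooth density on mesoscopic scales, even when the one-step characteristic function is not integrable. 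Combined with a quantitative small ball estimate $\mathbb P(|P_n(t)|<\varepsilon)\lesssim \varepsilon/\sqrt{n}+n^{-c}$ in a suitable window of $\varepsilon$, this translates into
\[
\frac{1}{2\varepsilon}\mathbb{E}\bigl[|P_n'(t)|\,\mathbf{1}_{|P_n(t)|<\varepsilon}\bigr] \;=\; n\cdot g_n(t) + r_n(t,\varepsilon),
\]
with $g_n(t)\to 1/(\pi\sqrt{3})$ pointwise and $r_n(t,\varepsilon)$ controllable uniformly in $t$.

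Once the pointwise density is in hand, I would recover the constant through the classical Gaussian computation: with $\lambda_0 = \mathrm{Var}(P_n(t))/n \to 1/2$ and $\lambda_2 = \mathrm{Var}(P_n'(t))/n^3\to 1/6$, the limiting Gaussian Kac-Rice density equals $\tfrac{1}{\pi}\sqrt{\lambda_2/\lambda_0} = \tfrac{1}{\pi\sqrt{3}}$, and dominated convergence after integration over $t\in[a,b]$ gives the stated limit. The hardest step, I expect, is not the Gaussian computation but the interchange of the limits $\varepsilon\to 0^+$ and $n\to\infty$ in the regularized Kac-Rice identity: for discrete coefficients one cannot simply evaluate a density at $0$, and one must instead produce an Edgeworth expansion in total variation on scales $\varepsilon$ that shrink with $n$ while remaining above the arithmetic scale imposed by the lattice behavior of the entries. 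The weak Cramer condition $\mathcal C(1,b)$ is exactly the analytic input that makes this possible; translating this spectral condition into usable density and small ball bounds, uniformly in the phases $\theta_k$, is the main technical core of the proof.
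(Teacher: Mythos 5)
Your outline reproduces the architecture of the paper's proof (a non-asymptotic Kac--Rice identity at a mesoscopic window, small ball estimates and an Edgeworth expansion under the weak Cramer condition, then the Gaussian computation), but the quantitative core that makes these pieces compatible is missing, and as stated the key step fails. The scale $\varepsilon$ at which the Kac functional can be identified with $\mathcal Z_n$ with high probability is governed by the bivariate quantity $\min_{t}\bigl(|P_n(t)|+|P_n'(t)|\bigr)$ (a near-double-zero event), not by the univariate small ball bound $\mathbb P(|P_n(t)|<\varepsilon)$ you invoke; controlling it requires a small ball estimate for the pair $(P_n(t),P_n'(t))$ together with a discretization of $[a,b]$ and moment bounds on $\sup_t(|P_n'|+|P_n''|)$, and with $q$ finite moments this forces the normalized window down to order $n^{-r}$ with $r>1+1/q$. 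The Edgeworth error per fixed $t$ is then divided by this window, so an expansion with error $O(n^{-1/2})$, as you propose, yields a relative error of order $n^{r-1/2}\to\infty$ and cannot close the argument: one needs the error $O(n^{-(s-2)/2})=O(n^{-3/2})$ of the order-three expansion, and the correction terms of orders $n^{-1/2}$ and $n^{-1}$ must be checked to integrate to $o(1)$ over the window. This is precisely why the theorem assumes a finite fifth moment and why the constraint $\tfrac32+\tfrac1b-\tfrac s2>0$, i.e.\ $b<1$ with $s=q=5$ and $1+\tfrac1q<r<\tfrac{s-2}{2}$, appears; none of this numerology is in your plan. Moreover, an Edgeworth expansion \emph{in total variation} is impossible for discrete coefficients (the law of the normalized pair is purely atomic, hence at total variation distance one from any absolutely continuous measure); what can be proved under $\mathcal C(1,b)$, and what the paper does, is an expansion tested against the specific functionals $f(x,y)=|y|\mathbf{1}_{\{|x|<\delta\}}$ in the Bhattacharya--Rao style, with an oscillation term $\bar{\omega}_f$ that must itself be shown to be $O(\varepsilon)$ uniformly in $\delta$.

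Two further points. Your step-one identity for fixed $n$ presupposes almost sure non-degeneracy ($P_n(a)P_n(b)\neq0$ and no common zero of $P_n$ and $P_n'$), which is not automatic for discrete entries; the paper sidesteps this by proving exactness only on an event of probability tending to one and absorbing the complement via the deterministic bounds $\mathcal Z_n\le 2n$ and $\sup_{\delta>0}\frac1{2\delta}\int_a^b|P_n'(t)|\mathbf{1}_{\{|P_n(t)|<\delta\}}dt\le Cn$, so you should phrase the reduction the same way rather than as an exact limit identity at fixed $n$. Finally, for the full $\cos$ plus $\sin$ model the covariance of $(P_n(t)/\sqrt n,\,P_n'(t)/n^{3/2})$ is $\mathrm{diag}(1,\sigma_n^2)$ with $\sigma_n^2\to1/3$ for \emph{every} $t$ (there are no phase resonances), not $\mathrm{diag}(1/2,1/6)$; your two factors of $2$ cancel in the ratio, so the constant $\frac1{\pi\sqrt3}$ survives, but the intermediate statement is incorrect.
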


Let us emphasize at this point that the class $\mathcal C(1,b)$ occuring in the previous statement is defined via a new weak Cramer type condition and that it does not only includes all continuous variables but also discrete ones in a generic way. We stress that this weak Cramer's condition has a remarkable independent interest. Indeed, it implies the validity of Edgeworth expansions in the Kolmogorov metric which usually requires a much stronger form of Cramer condition and hence is classically inapplicable for discrete laws, see Section \ref{sec.edgeworth} below.
\par
\medskip
The strategy of our proof is based on the simple fact that, in the deterministic case, Kac-Rice formula which express the number of real zeros of a smooth function as a limit of an integral with parameter, is ``exact'' as soon as the small parameter appearing in the limit is smaller that a fixed threshold. In the random case, provided that we can estimate this threshold,  one can thus establish an analogue formula, which becomes ``exact'' with high probability. When the law of the random entries belongs to the class $\mathcal{C}(1,b)$ (see section 2 below), this simple remark allows us to work efficiently with the Kac-Rice formula even when this law is discrete, which is often considered as difficult if not impossible, see the excellent introduction of \cite{nguyen}. The desired control of the threshold is obtained via a small ball estimate for the polynomial and its derivative. The end of the proof is based on an Edgeworth expansion which allows us to compare the Kac-Rice formula for general entries with its analogue with Gaussian ones. We stress that this new approach is exclusively based on the Kac-Rice formula and brings a lot of new perspectives to tackle universality problems since Kac-Rice formula applies to a much wider variety of contexts than solely polynomial random functions.
\par
\medskip
The plan of the paper is the following. In the next section, we introduce the weak Cramer condition under which the above universality result holds. We exhibit 
simple examples of variables satisfying this condition and show that, in some sense, it is generic for discrete, non-lattice random variables. 
In Section \ref{sec.smallball} and \ref{sec.edgeworth} respectively, we then explicit some small ball estimate and an Edgeworth expansion for the normalized sum of random vectors satisfying the weak Cramer condition. In order to facilitate the reading of the paper, the proofs of the results stated in Sections 3 and 4 are postponed in the last Section 6. Finally, in Section 5, we give the proof of Theorem \ref{theo.universalite}. Namely, with the help of the small ball estimate, we first establish the ``exact'' version of the Kac-Rice formula cited above. We then use the Edgeworth expansion to replace the functional of the general entries in Kac-Rice formula by the analogue functional of Gaussian entries.


\section{Weakening the Cramer condition}

\label{sec.cramer}
All the random variables appearing in the sequel are supposed to be defined on an abstract probability space $(\Omega, \mathcal F, \mathbb P)$ and $\mathbb E$ denotes the associated expectation. A generic element of the set $\Omega$ will be denoted by $\omega$. 
Let us first recall that a random vector with values in $\mathbb R^d$ is said to satisfy the (classical) Cramer condition if its characteristic function or Fourier transform $\phi_X(t) := \mathbb E[e^{i t\cdot X}]$ is such that
\begin{equation}\label{eq.cramer}
\limsup_{||t||_2 \to\infty}|\phi_X(t)|<1.
\end{equation}
For instance, any distribution having a continuous component satisfies the Cramer condition in virtue of Riemann-Lebesgue Lemma. 
There exists also purely singular distributions that satisfy the classical Cramer condition \eqref{eq.cramer}. For example, if $0<\theta<1/2$ is not the inverse of a Pisot number, then Salem proved in Theorem 2 p. 40 of \cite{salem} that if $(\varepsilon_k)_{k \geq 0}$ is sequence of i.i.d. random variables such that $\mathbb P(\varepsilon_k=0) = \mathbb P(\varepsilon_k=1)=\frac{1}{2}$ and if $X:=\sum_{k=0}^{+\infty} \theta^k \varepsilon_k$, then the law of $X$ is both singular with respect to the Lebesgue measure on $[0,1]$ and its Fourier transform is such that $\lim_{|t| \to\infty}|\phi_X(t)|=0$.
On the other hand, it can be shown that for any purely discrete real random variable $X$, one has $\limsup_{|t|\to\infty}\left|\phi_X(t)\right|=1$, see e.g. p. 207 of \cite{bhatta}.

\medskip
In this section, we introduce a new,  weak Cramer type condition that quantifies the fact that the characteristic function of a random vector is bounded away from one at infinity.  As we shall see below, contrarily to the above classical condition, this weaker condition is satisfied by both continuous and discrete (but non-lattice) distributions. In fact, we shall even prove in the next Proposition \ref{genericity1} that this weak Cramer condition is ``generically'' satisfied among discrete distributions.

\begin{defn}\label{def.WCramer}
A random vector $X$ with values in $\mathbb R^d$ and with characteristic function $\phi_X$, is said to satisfy the following weak Cramer condition with exponent $b>0$ if there exists constants $C>0$ and $R>0$ such that for all $\|t\|_2>R$
\begin{equation}\label{eq.WCramer}
|\phi_X(t) | \leq 1 - \frac{C}{\|t\|_2^{b}}.
\end{equation}
For later convenience, the class of probability measures on $\R^d$ satisfying this property will be denoted by $\mathcal{C}(d,b)$.
\end{defn}
The above definition naturally extends to sequences of random vectors.
\begin{defn}\label{def.WMCramer}
A sequence of random vectors $(X_{i,n})_{1 \leq i \leq n}$ with values in $\mathbb R^d$ and with characteristic function $\phi_{X_{i,n}}$ is said to satisfy the following mean weak Cramer condition with exponent $b>0$, if there exists constants $C>0$ and $R>0$ such that for $\|t\|_2>R$ and for $n$ large enough 
\begin{equation}
\label{eq.WMCramer}
\frac{1}{n} \sum_{i=1}^n |\phi_{X_{i,n}}(t) |\leq 1 - \frac{C}{\|t\|_2^{b}}.
\end{equation}
Again, for later convenience, the class of sequences of random vectors with values in $\R^d$ satisfying this property will be denoted by $\overline{\mathcal{C}}(d,b)$.
\end{defn}

Obvioulsy, the classical Cramer condition implies the weak one for any positive value of the parameter $b$. Roughly speaking, the classical Cramer condition might be thought as the limiting case when $b\to 0$ of the conditions $\mathcal{C}(d,b)$. However the major difference between the classical condition \eqref{eq.cramer} and the weak one \eqref{eq.WCramer}, or its average version \eqref{eq.WMCramer}, is that the class $\mathcal{C}(d,b)$ contains discrete distributions whereas, as already noticed just above, probability measures satisfying the classic Cramer condition cannot be discrete.

\begin{rmk}
The weak Cramer condition can be tensorized in the following way. If $X_1$ and $X_2$ are two independent random vectors in the class $\mathcal C(d_1, b_1)$ and $\mathcal C(d_2, b_2)$ respectively, then the random vector $(X_1,X_2)$ belongs to the class $\mathcal C(d_1+d_2, \max(b_1,b_2))$. Indeed, if for $||t_1||_2$ and $||t_2||_2$ large enough we have 
\[
|\phi_{X_1}(t_1) | \leq 1 - \frac{C_1}{||t_1||_2^{b_1}}, \quad |\phi_{X_2}(t_2) | \leq 1 - \frac{C_2}{||t_2||_2^{b_2}},
\]
then for some positive constants $C$ and $C'$ and for $||(t_1,t_2)||_2$ large enough, we have
\[
|\phi_{(X_1,X_2)}(t_1,t_2) | = |\phi_{X_1}(t_1) |\times |\phi_{X_2}(t_2) | \leq 1 - \frac{C}{||(t_1,t_2)||_{\infty}^{b_1 \vee b_2}} \leq 1 - \frac{C'}{||(t_1,t_2)||_{2}^{b_1 \vee b_2}}.
\]
\end{rmk}

The next proposition illustrates the fact that the condition defining the class $\mathcal{C}(1,b)$ is actually generically satisfied by discrete real random variables. It also emphasizes the relation between the exponent $b$ and the number of atoms of the considered distribution. Roughly speaking, the more atoms the distribution has, the smaller the exponent $b$ can be chosen. Equivalently, the more atoms the distribution has, the better is the quantitative bound on the distance between its characteristic function and one.

\begin{prop}\label{genericity1}
Let us fix an integer $p \geq 3$. To any vector $U=(U_i)_{1\le i \le p} \in \mathbb R^p$, we associate the set of measures of the configuration space 
\[
\mathcal M_p(U):= \left \lbrace \sum_{i=1}^p c_i \delta_{U_i}, \, (c_i)_{1 \leq i \leq p} \in (0,+\infty)^p, \, \sum_{i=1}^p c_i = 1 \right \rbrace.
\]
Then, if $\lambda_p$ denote the Lebesgue measure on $\mathbb R^p$, for all $b>\frac{1}{p-2}$ we have
\[
\lambda_p \left( \left \lbrace  U \in \mathbb R^p, \mathcal M_p(U) \notin \mathcal{C}(1,2 b) \right \rbrace \right)=0.
\]
\end{prop}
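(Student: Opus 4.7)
My strategy is to recast the weak Cramer condition as a simultaneous Diophantine approximation problem on the ratios of the differences $U_j - U_k$, and then to invoke the classical simultaneous Khintchine-Groshev theorem in dimension $p-2$.

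First I rephrase the bound to be proved. Writing $\mu_U = \sum_i c_i \delta_{U_i}$ and $\xi_{jk} = U_j - U_k$, a direct expansion gives
\begin{equation*}
1 - |\phi_{\mu_U}(t)|^2 \;=\; 2 \sum_{1 \leq j < k \leq p} c_j c_k \bigl(1 - \cos(t \xi_{jk})\bigr),
\end{equation*}
and the elementary inequality $1-\cos(x) \geq 8 \|x/(2\pi)\|^2$ (with $\|\cdot\|$ denoting the distance to the nearest integer) yields
\begin{equation*}
1 - |\phi_{\mu_U}(t)|^2 \;\geq\; 16 \Bigl( \min_{j<k} c_j c_k \Bigr) \max_{j<k} \bigl\|t \xi_{jk}/(2\pi)\bigr\|^2.
\end{equation*}
Since $1-|\phi_{\mu_U}(t)| \leq 1-|\phi_{\mu_U}(t)|^2$ and $\min c_j c_k > 0$, it is enough to prove that for $\lambda_p$-almost every $U \in \mathbb{R}^p$ there exists $C = C(U) > 0$ such that, for $|t|$ large enough,
\begin{equation*}
\max_{1\leq j<k\leq p} \bigl\|t \xi_{jk}/(2\pi)\bigr\| \geq C/|t|^b. \qquad (\star)
\end{equation*}

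Second, I reduce $(\star)$ to a simultaneous rational approximation problem for the $p-2$ ratios $\alpha_i := \xi_{1,i+2}/\xi_{12}$, $i = 1, \ldots, p-2$, which are well-defined outside the $\lambda_p$-negligible set $\{\xi_{12}=0\}$. Suppose $(\star)$ fails at a real $t$ with left-hand side $\delta$. Then $\|t\xi_{12}/(2\pi)\| \leq \delta$, so $t = 2\pi n/\xi_{12} + r$ for the nearest integer $n$ to $t\xi_{12}/(2\pi)$, with $|r| \leq 2\pi\delta/|\xi_{12}|$. Substituting in $\|t\xi_{1,i+2}/(2\pi)\| \leq \delta$ gives
\begin{equation*}
\|n\alpha_i\| \leq \bigl(1 + |\alpha_i|\bigr)\,\delta \quad \text{for } i = 1, \ldots, p-2,
\end{equation*}
i.e. $\max_i\|n\alpha_i\| \leq K(U)\,\delta$ for an $U$-dependent constant $K(U)$. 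Moreover, for $|t|$ large one has $|n| \geq |t\xi_{12}|/(4\pi)$, so $n$ is itself large and comparable to $|t|$. Hence any lower bound of the form $\max_i\|n\alpha_i\| \geq c/n^b$ valid for all large integers $n$ implies $(\star)$ for all large real $t$, with possibly smaller $C$.

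Third, I apply the \emph{simultaneous Khintchine-Groshev theorem}: for any monotone $\psi:\mathbb{N}\to(0,\infty)$ with $\sum_q \psi(q)^s < \infty$, the set of $\alpha \in \mathbb{R}^s$ for which $\max_i\|q\alpha_i\| < \psi(q)$ holds infinitely often is Lebesgue negligible. With $s = p-2$ and $\psi(q) = q^{-b}$, the series $\sum q^{-b(p-2)}$ converges exactly when $b > 1/(p-2)$. Consequently, for $\lambda_{p-2}$-almost every $\alpha \in \mathbb{R}^{p-2}$, $\max_i \|n\alpha_i\| \geq n^{-b}$ for all sufficiently large $n$, which by the previous step yields $(\star)$. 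Finally, the map $U \mapsto (\alpha_1,\ldots,\alpha_{p-2})$ is a smooth submersion on the open full-measure set $\{U_1\neq U_2\}$; fixing $U_1, U_2$ with $U_1\neq U_2$, it is even a diffeomorphism in $(U_3,\ldots,U_p)$, so a Fubini argument transports the null set in $\alpha$-space to a null set in $\mathbb{R}^p$.

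The main obstacle is the careful handling of step two: one must pass from the real parameter $t$ appearing in $\phi_{\mu_U}$ to an integer parameter $n$ suitable for metric Diophantine approximation, while checking that all absorbed constants depend on $U$ only and that the integer $n$ is genuinely large when $|t|$ is large. Once this transfer is in place, everything else reduces to quoting the classical Khintchine-Groshev statement and a standard Fubini-type preservation of null sets under a submersion.
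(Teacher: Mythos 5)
Your proof is correct in substance and reaches the stated conclusion, but it organizes the argument differently from the paper. Both start from the same identity $1-|\phi_{\mu_U}(t)|^2=2\sum_{j<k}c_jc_k\bigl(1-\cos(t(U_j-U_k))\bigr)$ and the same quadratic lower bound in terms of distances to $2\pi\mathbb{Z}$, and both ultimately rest on a Borel--Cantelli estimate for simultaneous rational approximation of the ratio vector $\bigl((U_1-U_j)/(U_1-U_2)\bigr)_{j\geq 3}$. The paper, however, proves the needed metric statement by hand (Lemma~\ref{lem.uniform}: randomizing the atoms as $U_j=U_1-V_j$ with $V_j$ uniform, building the events $A_r$, and running Borel--Cantelli with an auxiliary exponent $z<x$ to absorb the error term in the real-to-integer transfer), whereas you quote the convergence half of the simultaneous Khintchine--Groshev theorem as a black box --- legitimate here, since that half is exactly the Borel--Cantelli computation and requires no monotonicity --- and you perform the transfer from the real parameter $t$ to the nearest integer $n$ directly, absorbing the error $|\alpha_i|\delta$ into a $U$-dependent constant, which lets you work with a single exponent $b$ instead of the paper's pair $x>z$. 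Your Fubini/submersion argument on $\{U_1\neq U_2\}$ plays the role of the paper's device of giving the $U_j$ a positive joint density; both correctly convert the almost-everywhere statement in the $\alpha$-variables into the $\lambda_p$-almost-everywhere statement in $U$. One small slip: the bridging inequality you invoke, $1-|\phi|\leq 1-|\phi|^2$, points the wrong way for deducing a lower bound on $1-|\phi|$; what you need is $1-|\phi|\geq\tfrac{1}{2}\bigl(1-|\phi|^2\bigr)$, which follows from $1-|\phi|^2=(1-|\phi|)(1+|\phi|)\leq 2(1-|\phi|)$ (this is how the paper obtains its factor $c_{\min}^2/\pi^2$); with that trivial correction your estimate delivers membership in $\mathcal{C}(1,2b)$ exactly as claimed.
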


\begin{proof}
Let us first remark that we can restrict ourselves to vectors $U$ belonging to compact sets, say $[-M,M]^p$ for any real $M>0$.
Let us also note that, if $X$ is a random variable with discrete distribution $\sum_{i=1}^p c_i \delta_{U_i} \in \mathcal M_p(U)$
for some $U=(U_i)_{1\le i \le p} \in \mathbb R^p$, if $c_{\text{min}}:=\min_{1\leq i \leq p} c_i>0$, then we always have, for all $t \in \mathbb R$
\begin{equation} \label{debutpreuvegeneLeb}
1-\left|\E[e^{it  X}]\right| \geq \frac{ c_{\text{min}}^2}{\pi^2}\sum_{j=2}^p\text{dist}^2 (t (U_1-U_j) , 2\pi\mathbb{Z}).
\end{equation}
Indeed, if $X$ and $\tilde{X}$ are two independent variables with discrete distribution $\sum_{i=1}^p c_i \delta_{U_i} $, since $\E[\sin(t (X-\tilde{X}))]=0$, we have
\begin{eqnarray*}
\left|\E(e^{itX})\right|^2&=&\E(e^{itX})\E(e^{-itX}) = \E\left(e^{it(X-\tilde{X})}\right) =\E\left(\cos\left(t(X-\tilde{X})\right)\right)\\
&=&\sum_{i\neq j} c_i c_j \cos\left(t(U_i-U_j)\right)+\sum_{i=1}^p c_i^2.
\end{eqnarray*}
Substracting $1=\sum_{i\neq j} c_i c_j +\sum_i c_i^2$ on both sides of the last equation, we get
\[
\begin{array}{ll}
\displaystyle{1-\left|\E(e^{itX})\right|^2} &= \displaystyle{\sum_{i\neq j} c_i c_j\left(1-\cos\left(t(U_i-U_j)\right)\right)}  \ge  \displaystyle{\frac{2}{\pi^2}\sum_{i\neq j} c_ic_j \text{dist}^2 (t (U_i-U_j) , 2\pi \mathbb{Z} )} \\
\end{array}
\]
where we have used the fact that $1-\cos(x) \ge \frac{2}{\pi^2}x^2$ for all $x \in [-\pi,\pi]$. We have thus 
\[
1-\left|\E[e^{it  X}]\right| \geq \frac{2}{1+\left|\E[e^{it  X}]\right|}\frac{c_{\text{min}}^2}{\pi^2}\sum_{j=2}^p\text{dist}^2 (t (U_1-U_j) , 2\pi\mathbb{Z}),
\]
hence Equation \eqref{debutpreuvegeneLeb}. 
To prove Proposition \ref{genericity1}, it is thus sufficient to exhibit some positive constant $C$ and some random vector $(U_1, \ldots, U_p)$ with positive density on $[-M,M]^d$, such that, almost surely, for all $x>\frac{1}{p-2}$ and for $|t|$ large enough 
\begin{equation}
\sum_{j=2}^p \text{dist}^2(t (U_1-U_j), 2\pi \mathbb{Z})\ge\frac{C}{|t|^{2x}}.
\end{equation}
If $U_1$ is choosen to be a uniform variable in $[-M,M]$, and if we choose $U_j$ of the form $U_j:=U_1-V_j$ for $2\leq j \leq p$, where the variables $(V_j)_{2\leq j \leq p}$ are mutually independent and are also independent of the first component $U_1$, Proposition \ref{genericity1} is therefore a consequence of Equation \eqref{debutpreuvegeneLeb} and of the following Lemma whose proof is given below. 
\end{proof}

\begin{lma}\label{lem.uniform}
Let $(V_j)_{2\leq j\leq p}$ be independent random variables with uniform distribution on $[-2M,2M]$, then for all $x>\frac{1}{p-2}$ 
\[
\mathbb P \left( \liminf_{|t| \to +\infty} |t|^{2x} \sum_{j=2}^p \text{dist}(t V_j, 2\pi \mathbb{Z})^2\geq 1\right)=1.
\]
\end{lma}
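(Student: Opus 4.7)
The plan is a classical Borel--Cantelli argument along a geometric discretisation of $t$, driven by a small ball estimate for $\sum_{j=2}^p \text{dist}(tV_j, 2\pi\mathbb{Z})^2$. By symmetry of the uniform distribution on $[-2M,2M]$, the function $t\mapsto\text{dist}(tV_j,2\pi\mathbb{Z})$ is even, so I would restrict to $t>0$. The core computation is to balance the size of the discretisation net against the small ball probability at each node, and the miracle is that the right threshold condition is precisely $x(p-2)>1$.

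\textbf{Small ball estimate.} For $t\geq 1$ large, $tV_j$ is uniform on $[-2Mt,2Mt]$, whose projection modulo $2\pi$ differs from the uniform measure on $[0,2\pi)$ by $O(1/t)$ in total variation. Hence for any $\delta\in(0,\pi)$,
\[
\mathbb{P}\bigl(\text{dist}(tV_j,2\pi\mathbb{Z})<\delta\bigr)\leq \frac{\delta}{\pi}+\frac{C_0}{t}.
\]
Since $\sum_j \text{dist}(tV_j,2\pi\mathbb{Z})^2<\eta$ forces each summand to be $<\eta$, independence of the $V_j$ gives
\[
\mathbb{P}\Bigl(\sum_{j=2}^p \text{dist}(tV_j,2\pi\mathbb{Z})^2<\eta\Bigr)\leq \Bigl(\frac{\sqrt{\eta}}{\pi}+\frac{C_0}{t}\Bigr)^{p-1}.
\]

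\textbf{Discretisation and union bound.} For $T\geq 1$, partition $[T,2T]$ into $N_T\leq C_1 T^{1+x}$ intervals of length at most $(4MT^x)^{-1}$, with nodes $t_k$. Because $|V_j|\leq 2M$, the map $t\mapsto\text{dist}(tV_j,2\pi\mathbb{Z})$ is $2M$--Lipschitz, so it oscillates by at most $\tfrac{1}{2}T^{-x}$ on each subinterval. Let
\[
A_T:=\Bigl\{\exists\, t\in[T,2T]:\, t^{2x}\sum_{j=2}^p \text{dist}(tV_j,2\pi\mathbb{Z})^2<1\Bigr\}.
\]
If $A_T$ holds, then at some $t\in[T,2T]$ every $\text{dist}(tV_j,2\pi\mathbb{Z})$ is less than $T^{-x}$, and hence at the nearest node $t_k$ every $\text{dist}(t_kV_j,2\pi\mathbb{Z})$ is less than $\tfrac{3}{2}T^{-x}$. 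Applying the small ball estimate at $t=t_k\geq T$ with $\delta=\tfrac{3}{2}T^{-x}$ and a union bound,
\[
\mathbb{P}(A_T)\leq N_T\cdot\Bigl(\frac{3}{2\pi T^x}+\frac{C_0}{T}\Bigr)^{p-1}\leq C_2\,T^{1+x-x(p-1)}=C_2\,T^{1-x(p-2)}.
\]

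\textbf{Borel--Cantelli conclusion.} The assumption $x>\tfrac{1}{p-2}$ makes $1-x(p-2)<0$, so $\sum_n \mathbb{P}(A_{2^n})<\infty$. By the first Borel--Cantelli lemma, almost surely only finitely many $A_{2^n}$ occur, which means almost surely there exists $N$ such that for all $n\geq N$ and all $t\in[2^n,2^{n+1}]$, $t^{2x}\sum_j \text{dist}(tV_j,2\pi\mathbb{Z})^2\geq 1$. Since every large $t>0$ lies in exactly one such dyadic interval (and $t<0$ reduces to $|t|$ by symmetry), this yields the $\liminf$ statement.

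\textbf{Main obstacle.} The delicate point is the joint calibration of (i) the density $T^{1+x}$ of the discretisation, forced by the $2M$-Lipschitz modulus together with the target level $T^{-x}$, and (ii) the small ball exponent $(p-1)/2$, which arises because each of the $p-1$ factors contributes a $\sqrt{\eta}$-gain. The two combine to give the summability threshold $x(p-2)>1$, matching exactly the hypothesis of the lemma; any weaker small ball bound or any coarser discretisation would miss the critical exponent. The residual error $C_0/t$ in the small ball estimate is harmless because $T^{-x}\gg T^{-1}$ for $x<1$, a range one may safely assume since the lemma is trivial for $x\geq 1$ (take $p=3$).
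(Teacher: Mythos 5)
Your dyadic-block discretisation plus Borel--Cantelli scheme is a legitimate alternative to the paper's argument (which conditions away small $|V_j|$ and runs Borel--Cantelli over integer frequencies $r$, i.e.\ over simultaneous rational approximations of $(V_3/V_2,\dots,V_p/V_2)$, followed by a deterministic dichotomy in $t$), and your exponent count $(1+x)-x(p-1)=1-x(p-2)$ is the right one. However, as written there is a genuine gap: your small ball estimate carries the additive error $C_0/t$ coming from the total-variation comparison, so your bound $\mathbb{P}(A_T)\le C_2\,T^{1-x(p-2)}$ is only valid when $x<1$; for $x\ge 1$ the term $C_0/T$ dominates $T^{-x}$ and your bound degrades to $\mathbb{P}(A_T)\le C\,T^{x+2-p}$, which is not summable when $p=3$. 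You dismiss the case $x\ge 1$ as ``trivial (take $p=3$)'', but this is exactly backwards: for $p=3$ the hypothesis of the lemma is $x>\frac{1}{p-2}=1$, so the entire admissible range lies in the regime your estimate does not cover, there is no admissible $x'<1$ to which you could reduce by monotonicity of $|t|^{2x}$ in $x$, and invoking the lemma ``with $p=3$'' there is circular. (For $p\ge 4$ the reduction to $x<1$ by monotonicity is fine.)

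The gap is easily repaired, because the $O(1/t)$ error is an artefact of the TV bound: the event $\{\text{dist}(tV_j,2\pi\mathbb{Z})<\delta\}$ means $V_j$ lies in a union of at most $\frac{2Mt}{\pi}+2$ intervals of length $2\delta/t$, so directly, for $t\ge 1$ and $\delta\in(0,\pi)$,
\[
\mathbb{P}\bigl(\text{dist}(tV_j,2\pi\mathbb{Z})<\delta\bigr)\le \frac{\delta}{\pi}+\frac{\delta}{Mt}\le \Bigl(\frac{1}{\pi}+\frac{1}{M}\Bigr)\delta,
\]
i.e.\ the error is $O(\delta/t)$, not $O(1/t)$. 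With this, $\mathbb{P}(A_T)\le C\,T^{1+x}\,T^{-x(p-1)}=C\,T^{1-x(p-2)}$ for every $x>\frac{1}{p-2}$, including $p=3$, and your Borel--Cantelli conclusion goes through verbatim. Once corrected, your route is somewhat more elementary than the paper's: it trades the conditioning on $|V_j|>m$, the approximation events $A_r$ and the auxiliary exponent $z\in(\frac{1}{p-2},x)$ for a union bound over a polynomial net of order $T^{1+x}$ points in $[T,2T]$, both arguments ultimately resting on the same threshold $x(p-2)>1$.
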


\begin{proof}[Proof of Lemma  \ref{lem.uniform}]
Let us fix $x>z>\frac{1}{p-2}$ and remark that for all $m>0$, the probability of interest is bounded below by the product
\[
 \mathbb P \left( \liminf_{|t| \to +\infty} |t|^{2x} \sum_{j=2}^p \text{dist}(t V_j, 2\pi \mathbb{Z})^2 \geq 1 \, \Big| \,   |V_j| >m, \, 2\leq j \leq p\right) \mathbb P \left( |V_j| > m, \, 2\leq j \leq p\right).
\]
The above conditional probability can then be interpreted as an unconditional probability where the law of the variables $V_j$ is now uniform in $[-2M,2M] \backslash [-m,m]$ and the second probability $\mathbb P \left( |V_j| > m, \, 2\leq j \leq p\right)$ goes to one as $m$ goes to zero. Hence, to establish the desired result, we can restrict ourselves to variables $(V_j)_{2\leq j\leq p}$ that are mutually independent and uniform in $[-2M,2M] \backslash [-m,m]$, for an arbitrary small $m>0$. For a non zero integer $r$, let us define the events
\[
B(r):=\bigcup_{|q|\le 1+\frac{4|r|M}{m}} \left] \frac{q V_2}{r}-\frac{1}{2 |r|^{1+z}},\frac{q V_2}{r}+\frac{1}{2 |r|^{1+z}}\right[ \;\; \hbox{and} \;\; A_r := \bigcap_{j=3}^p \{V_j \in B(r)\}.
\]
Then, the variables $V_j$ being independent, we have
\begin{eqnarray*}
\mathbb{P}\left(\bigcup_{|r|=1}^\infty A_r \right)&\le& \sum_{|r|=1}^\infty \mathbb{P}\left(A_r\right)
=\sum_{|r|=1}^\infty \mathbb{P}\left(V_3\in B(r)\right)^{p-2}
= \sum_{|r|=1}^\infty \left(\frac{\lambda_1\left(B(r)\right)}{2(2M-m)}\right)^{p-2}\\
& \leq  &  \sum_{|r|=1}^\infty \left(\frac{1}{2(2M-m)} \left( 1+2 \left( 1+ \frac{4r M}{m}\right) \right) \frac{1}{|r|^{1+z}}\right)^{p-2}\\
&\le & \left(  \frac{4(M+1)}{m(2M-m)}\right)^{p-2}\sum_{|r|=1}^\infty \frac{1}{|r|^{z(p-2)}}<\infty.
\end{eqnarray*}
Relying on Borel-Cantelli lemma, almost surely, there is only finitely many integers $r$ such that the event $A_r$ is realized. Thus, there exists a set $\Omega' \subset \Omega$ of full measure, such that for all $\omega \in \Omega'$, there exists $\kappa=\kappa(\omega)>0$ such that for any $r \in \mathbb Z\backslash \{0\}$ and any $(q_j)_{3\leq j \leq p} \in \mathbb{Z}^{p-2}$ such that $|q_j|\leq 1+\frac{4|r|M}{m}$ for $3\leq j \leq p$:
\begin{equation}\label{eq.vj}
\sum_{j=3}^p \left|\frac{V_j(\omega)}{V_2(\omega)}-\frac{q_j}{r}\right|\ge \frac{\kappa(\omega)}{|r|^{1+z}}.
\end{equation}
Now, for all $|t|$ large enough so that $\frac{m}{4\pi}>\frac{1}{2\pi |t|^{x+1}}$, let us introduce the set
\begin{eqnarray*}
E_x(t):=\left\{y\in[-2M,2M]\backslash [-m,m]\,\Big{|} \, \exists k \in \mathbb{Z},\,\left|t y-2\pi k\right|\le \frac{1}{|t|^x}\right\}.
\end{eqnarray*}
Let us fix $\omega \in \Omega'$. If $V_2(\omega)\notin E_x(t)$, then we have naturally
\begin{equation}\label{eq.tz}
\sum_{i=2}^p\text{dist}^2 (t V_i(\omega) , 2\pi \mathbb{Z})\ge \text{dist}^2 (t V_2(\omega) , 2\pi \mathbb{Z}) \geq \frac{1}{|t|^{2x}}.
\end{equation}
Otherwise, if $V_2(\omega) \in E_x(t)$ then by definition, there exists some integer $r=r(\omega)$ such that $\left|t V_2(\omega)-2\pi r(\omega)\right|\le |t|^{-x}$, and in particular
\begin{equation}\label{eq.tsurr}
 \left| \frac{r(\omega)}{t}\right| \in \left[\frac{m}{2\pi}-\frac{1}{2\pi |t|^{x+1}},\frac{M}{\pi}+\frac{1}{2\pi |t|^{x+1}}\right] \subset \left] \frac{m}{4 \pi}, \frac{2M}{\pi} \right[.
\end{equation}
For $3\leq j \leq p-2$, let us consider the integer $q_j=q_j(\omega)$ minimizing the distance to $2\pi \mathbb Z$, namely  $|t V_j(\omega) - 2\pi q_j(\omega)|:=\text{dist}(tV_j(\omega), 2\pi \mathbb{Z})$. We can then write
\[
\sum_{j=3}^{p}\text{dist}(tV_j, 2\pi \mathbb{Z})  = \sum_{j=3}^{p} | t V_j - 2\pi q_j | = \sum_{j=3}^{p} \left| \left(t V_j- \frac{2\pi r V_j}{V_2} \right)+ \left(\frac{2\pi r V_j}{V_2} - 2\pi q_j\right) \right|, 
\]
so that by the triangle inequality, we get 
\[\begin{array}{ll}
 \displaystyle{\sum_{j=3}^{p}\text{dist}(tV_j(\omega), 2\pi \mathbb{Z})}  & \geq  \displaystyle{ \sum_{j=3}^{p} \left|\frac{2\pi r(\omega) V_j(\omega)}{V_2(\omega)} - 2 \pi q_j(\omega) \right| - \sum_{j=3}^{p}\left| t V_j(\omega) -  \frac{2\pi r V_j(\omega)}{V_2} \right|}  \\
&  \geq  \displaystyle{2\pi |r| \sum_{j=3}^{p} \left|\frac{V_j(\omega)}{V_2(\omega)} - \frac{q_j}{r} \right| - \frac{2M(p-2)}{m |t|^x}}.
\end{array}
\]
Since $|t V_j(\omega) - 2\pi q_j(\omega)| \leq 2\pi$ and since $|t| \leq 4\pi |r(\omega)| /m$ by Equation \eqref{eq.tsurr}, we have $|q_j(\omega)| \leq 1 + 4|r(\omega)|M/m$. 
Thus, using the estimate \eqref{eq.vj}, we get
\[
\sum_{j=3}^{p}\text{dist}(tV_j(\omega), 2\pi \mathbb{Z}) \geq   \frac{2\pi \kappa(\omega)}{|r(\omega)|^z}  - \frac{2M(p-2)}{m |t|^x}.
\]
Using Equation \eqref{eq.tsurr} relating $|r(\omega)|$ and $|t|$, we deduce that
\[
\sum_{j=3}^{p}\text{dist}(tV_j(\omega), 2\pi \mathbb{Z}) \geq  2\pi \kappa(\omega) \left(\frac{\pi}{2M} \right)^z\times \frac{1}{|t|^z}-\frac{2M(p-2)}{m |t|^x}.
\]
In particular, since we have choosen $x>z$, we get that there exists positive constants $C(\omega)$ and $D(\omega)$, that also depend on $(m,M,p, x, z)$, such that for all $|t|>D(\omega)$, 
\[
\sum_{i=3}^p\text{dist} (t V_i(\omega) , 2\pi \mathbb{Z})\ge \frac{C(\omega)}{|t|^{z}}.
\]
and applying Cauchy-Schwarz inequality, we finally get 
\begin{equation}\label{eq.tx2}
\sum_{i=3}^p\text{dist}^2(t V_i(\omega) , 2\pi \mathbb{Z})\ge \frac{C(\omega)^2}{p |t|^{2z}}.
\end{equation}
Combining estimates \eqref{eq.tz} and \eqref{eq.tx2}, we get that for all $\omega \in \Omega'$ and for all $|t| > D(\omega) $
\begin{equation}\label{eq.tx}
\sum_{i=2}^p\text{dist}^2 (t V_i(\omega) , 2\pi \mathbb{Z})\ge \left( \frac{1}{|t|^{2x}} \wedge  \frac{C(\omega)^2}{p |t|^{2z}} \right).
\end{equation}
In particular, for all $\omega \in \Omega'$, we have 
\[
\liminf_{|t| \to +\infty} |t|^{2x} \sum_{i=2}^p\text{dist}^2 (t V_i(\omega) , 2\pi \mathbb{Z}) \geq 1.
\]
\end{proof}

The aim of next proposition is to give more concrete examples of random variables satisfying the weak Cramer condition \eqref{eq.WCramer}.  Our starting point is Equation \eqref{debutpreuvegeneLeb} of the proof of Proposition \ref{genericity1}, but instead of using Borel-Cantelli lemma i.e. a probabilistic argument as above, the next proof is based on Diophantine approximation, and more precisely on the Subspace theorem on simultaneous rational approximation.

\begin{prop}\label{genericity2}
Let $p \geq 3$ be an integer and $z_1,\ldots,z_p$ be algebraic numbers which are rationally independent. Let $c_1,\cdots,c_p$ be $p$ positive numbers such that $\sum_{i=1}^p c_i=1$. Then, for any $b>\frac{1}{p-2}$, we have
\[
\sum_{i=1}^p c_i \delta_{z_i} \in \mathcal{C}(1, 2 b).
\]
\end{prop}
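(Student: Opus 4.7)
The plan is to exploit the same inequality \eqref{debutpreuvegeneLeb} established in the proof of Proposition \ref{genericity1}, namely
\[
1 - |\phi_X(t)| \geq \frac{c_{\text{min}}^2}{\pi^2} \sum_{j=2}^p \text{dist}^2\bigl(t(z_1 - z_j),\, 2\pi\mathbb{Z}\bigr),
\]
and to show that its right-hand side is at least $C/|t|^{2b}$ for $|t|$ large. In contrast with Proposition \ref{genericity1}, where a Borel--Cantelli argument produced such a lower bound for almost every configuration of atoms, here I would exploit the algebraicity of the $z_i$ via Schmidt's Subspace Theorem on simultaneous rational approximation.

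Set $\beta_j := z_1 - z_j$ for $j = 2, \ldots, p$. The $\mathbb{Q}$-linear independence of $z_1, \ldots, z_p$ readily implies that $\beta_2, \ldots, \beta_p$ are $\mathbb{Q}$-linearly independent: any relation $\sum_{j\geq 2} a_j \beta_j = 0$ expands as $\bigl(\sum_j a_j\bigr) z_1 - \sum_j a_j z_j = 0$, which forces each $a_j$ to vanish. Dividing by $\beta_2$, one deduces that $1, \beta_3/\beta_2, \ldots, \beta_p/\beta_2$ are also $\mathbb{Q}$-linearly independent; being ratios of algebraic numbers, they are real algebraic. Schmidt's Subspace Theorem therefore furnishes, for every $\varepsilon > 0$, a constant $c > 0$ such that
\[
\max_{3 \leq j \leq p} |q \beta_j - r_j \beta_2| \geq \frac{c\, |\beta_2|}{q^{1/(p-2) + \varepsilon}}
\]
for every positive integer $q$ and every $(r_3, \ldots, r_p) \in \mathbb{Z}^{p-2}$.

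To translate this bound into the required estimate, fix a large $|t|$, let $k_j$ denote the integer nearest to $t\beta_j/(2\pi)$, and set $\delta_j := t \beta_j - 2\pi k_j$, so that $|\delta_j| = \text{dist}(t\beta_j, 2\pi\mathbb{Z})$. Eliminating $t$ between the identities $t\beta_2 = 2\pi k_2 + \delta_2$ and $t \beta_j = 2\pi k_j + \delta_j$ yields
\[
2\pi (k_j \beta_2 - k_2 \beta_j) = \delta_2 \beta_j - \delta_j \beta_2,
\]
hence $\max_j |k_j \beta_2 - k_2 \beta_j| \leq C' \max_j |\delta_j|$. For $|t|$ large one has $1 \leq |k_2| \leq C'' |t|$, so applying the Schmidt bound at $q = |k_2|$ (after possibly flipping signs of the $r_j$'s) gives $\max_j |\delta_j| \geq c'/|t|^{1/(p-2) + \varepsilon}$. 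Since $\sum_{j=2}^p \delta_j^2 \geq \max_j \delta_j^2$, plugging back into \eqref{debutpreuvegeneLeb} and choosing $\varepsilon$ so small that $1/(p-2) + \varepsilon \leq b$ closes the argument.

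The main difficulty is to locate and apply the correct form of Schmidt's theorem, an additive statement on $|q\alpha_j - r_j|$ with critical exponent $1/(p-2)$ coming from the dimension of the simultaneous-approximation problem, and to bridge it to the original modulo-$2\pi$ quantities via the elementary identity above. Once this algebraic bridge is set up, the comparison $|k_2| \asymp |t|$ converts the $k_2$-bound into the desired $t$-bound without further subtlety.
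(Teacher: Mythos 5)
Your proposal is correct and follows essentially the same route as the paper: the key inequality \eqref{debutpreuvegeneLeb} combined with Schmidt's Subspace Theorem (simultaneous rational approximation with exponent $\frac{1}{p-2}+\varepsilon$) applied to the algebraic, rationally independent ratios $(z_1-z_j)/(z_1-z_2)$, with the nearest integer to $t(z_1-z_2)/2\pi$ serving as the denominator. The only, harmless, difference is that you bridge the two settings via the exact elimination identity $2\pi(k_j\beta_2-k_2\beta_j)=\delta_2\beta_j-\delta_j\beta_2$ rather than the paper's case distinction on whether $\mathrm{dist}\left(t(z_1-z_2),2\pi\mathbb{Z}\right)$ is small followed by a triangle inequality with an $O(|t|^{-x})$ error term, which slightly streamlines the argument.
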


\begin{proof}
Let $X$ be a discrete random variable with the distribution $\sum_{i=1}^p c_i \delta_{z_i}$ and let us fix $x>z>\frac {1}{p-2}$ and $t$ large enough. In view of Equation \eqref{debutpreuvegeneLeb}, we have to control de distance between $t(z_1-z_j)$ and $2\pi \mathbb Z$, for $2\leq j \leq p$. 
If $\text{dist} (t (z_1-z_2) , 2\pi \mathbb{Z})>\frac{1}{|t|^x}$ then we get
\begin{equation}\label{esti1}
\sum_{i=2}^p\text{dist}^2 (t (z_1-z_i) , 2\pi \mathbb{Z})\ge \frac{1}{|t|^{2x}}.
\end{equation}
Otherwise there exists some integer $q_2=q_2(t)$ such that $\left| t (z_1-z_2)-2\pi q_2\right|\le |t|^{-x}$. Let $q_j \in \mathbb Z$ such that $|t (z_1-z_2) - 2\pi q_j|:= \text{dist}(t (z_1-z_j), 2\pi \mathbb{Z})$ for $3\leq j\leq p$. As in the proof of Proposition \ref{genericity1}, using the triangle inequality we have
\begin{equation}\label{intermed1}
\sum_{j=3}^{p}\text{dist}(t (z_1-z_j), 2\pi \mathbb{Z}) \geq 2\pi |q_2| \sum_{j=3}^p \left|\frac{z_1-z_j}{z_1-z_2}-\frac{q_j}{q_2}\right| - \frac{M(p-2)}{ m |t|^x},
\end{equation}
where $m=\min_{i\ge 2} |z_1-z_i|$ and $M=\max_{i\ge 2} |z_1-z_i|.$  At this point, instead of using the Borel-Cantelli argument as above to control the first term on the right hand side, we use here the powerful Subspace Theorem based on the fact that $\frac{z_1-z_j}{z_1-z_2}$ are still algebraic and rationally independent. Indeed, with the same notations as the ones of Remark 7.3.4 of \cite{bombieri}, taking 
\[
N=q_2, \quad \alpha_j = \frac{z_1-z_j}{z_1-z_2}, \quad n=p-2 \;\;  \text{and} \;\; \varepsilon=z-\frac{1}{p-2}>0,
\]
there is some constant $\kappa>0$ such that for any $(q_2,\cdots,q_p)\in\mathbb{Z}^p$:
\begin{equation}\label{esti2}
\sum_{j=3}^p  \left|\frac{z_1-z_j}{z_1-z_2}-\frac{q_j}{q_2}\right|\ge \frac{\kappa}{|q_2|^{z}}.
\end{equation}
Therefore, combining estimates \eqref{intermed1} and \eqref{esti2}, we get that there exists a positive constant $C$ such that, if $|t|$ is large enough and if $\text{dist} (t (z_1-z_2) , 2\pi \mathbb{Z})\leq \frac{1}{|t|^x}$ 
\begin{equation}\label{esti3}
\sum_{j=3}^{p}\text{dist}^2(t (z_1-z_j), 2\pi \mathbb{Z}) \geq \frac{1}{p} \left( \sum_{j=3}^{p}\text{dist}(t (z_1-z_j), 2\pi \mathbb{Z})\right)^2 \geq \frac{C^2}{p |t|^{2z}}
\end{equation}
Finally, combining \eqref{esti1} and \eqref{esti3}, we get that for all $|t|$ large enough 
\begin{equation}
\sum_{i=2}^p\text{dist}^2 (t (z_1-z_j), 2\pi \mathbb{Z})\ge \left( \frac{1}{|t|^{2x}} \wedge  \frac{C(\omega)^2}{p |t|^{2z}} \right),
\end{equation}
which, in view of Equation \eqref{debutpreuvegeneLeb}, yields the desired result.
\end{proof}

Below are two explicit examples of such distributions.
\begin{exm}\label{ex.1}
For instance, let $N$ be a random variable following a Poisson distribution of parameter $\lambda>0$. Then 
$$\sqrt{N} \in \bigcap_{b >0} \mathcal{C}(1,b).$$
Indeed, among the atoms of $\sqrt{N}$ are the numbers $\sqrt{p}$ for any primer number $p$. It is well-known that these numbers are linearly independent over the field $\mathbb{Q}$ and the conclusion follows from the infinity of prime numbers and Proposition \ref{genericity2}.  
\end{exm}
\begin{exm}\label{ex.2}
Let $p \geq 5$ be a prime number and set $\theta=\frac{2\pi}{p}$. Then,
$$\frac{1}{p-1} \sum_{i=1}^{p-1} \delta_{\cos(i\theta)} \in \mathcal{C}(1,2b),\,\,\forall b > \frac{1}{p-3}.$$
Indeed, the reals numbers $(\cos(i\theta))_{1\le i \le p-1}$ are irrational, algebraic and linearly independent over the field $\mathbb{Q}$.
\end{exm}

In the next Sections \ref{sec.smallball} and \ref{sec.edgeworth}, we shall give examples of application of the weak Cramer condition, in establishing that the normalized sum of independent variables satisfying this condition automatically satisfies  a sharp small ball estimate and also admits a natural Edgeworth expansion. Before that, we conclude this section by noticing that if a sequence of random vectors satisfies the mean weak cramer condition \eqref{eq.WMCramer}, then it automatically satisfies a local (classical) Cramer condition. To state this result properly, we need to introduce a notation, namely, to any sequence $(X_{i,n})_{1 \leq i \leq n}$ and to any $\ell>0$, we associate the average $\ell^{\text{th}}$ moment $\rho_{\ell}(n)=\rho_{\ell}(n)((X_{i,n}))$ defined as
\begin{equation}\label{def.rho}
\rho_{\ell}(n) := \frac{1}{n} \sum_{i=1}^{n} \mathbb E\left[\left| X_{i,n} \right|^{\ell}\right].
\end{equation}

The local Cramer bound announced above is the following.
\begin{prop} \label{pro.cramerlocal}
Let $(X_{i,n})_{i\geq 1}$ be a sequence of independent random vectors with values in $\mathbb R^d$, belonging to the class $\overline{\mathcal C}(d,b)$ and such that $\sup_{n \geq 1} \rho_1(n)<+\infty$. Then, for all $0<r<R$, the following local Cramer bound holds 
\[
\limsup_{n \to +\infty} \sup_{r \leq ||u||_2 \leq R}  \frac{1}{n} \sum_{i=1}^n \left| \phi_{X_{i,n}}(u) \right| <1.
\]
\end{prop}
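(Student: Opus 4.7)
The plan is to combine the classical \emph{quadratic inequality} for characteristic functions with the mean weak Cramer hypothesis applied at an integer multiple of $u$ large enough to exceed the threshold in Definition \ref{def.WMCramer} (which I denote by $R_*$, $C_*$ to avoid clashing with the $R$ appearing in the conclusion). The point is to transport a bound that, under the hypothesis, holds only for large arguments, back to the compact annulus $\{r\le \|u\|_2 \le R\}$.

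The first ingredient is the elementary bound $1-|\phi_X(ku)|^2\le k^2(1-|\phi_X(u)|^2)$, valid for any integer $k\ge 1$ and any $\mathbb{R}^d$-valued random vector $X$. It follows from the symmetrization identity $|\phi_X(u)|^2=\mathbb{E}[\cos(u\cdot(X-X'))]$, with $X'$ an independent copy of $X$, combined with $|\sin(kx)|\le k|\sin x|$ applied in the half-angle form $1-\cos(k\theta)=2\sin^2(k\theta/2)$. Using $|\phi_X|\le 1$ to pass between $1-|\phi|$ and $1-|\phi|^2$ and averaging in $i$, this yields
\[
1-\frac{1}{n}\sum_{i=1}^n|\phi_{X_{i,n}}(ku)|\le 2k^2\left(1-\frac{1}{n}\sum_{i=1}^n|\phi_{X_{i,n}}(u)|\right).
\]

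The second ingredient is simply to choose the integer $K:=\lceil R_*/r\rceil+1$, so that $\|Ku\|_2\ge Kr>R_*$ uniformly in $u$ with $\|u\|_2\ge r$. The mean weak Cramer condition then furnishes, for $n$ large enough and all $u$ with $r\le \|u\|_2\le R$,
\[
\frac{1}{n}\sum_{i=1}^n|\phi_{X_{i,n}}(Ku)|\le 1-\frac{C_*}{\|Ku\|_2^b}\le 1-\frac{C_*}{(KR)^b}.
\]
Combining the two displays gives, uniformly in the annulus and for $n$ large enough,
\[
\frac{1}{n}\sum_{i=1}^n|\phi_{X_{i,n}}(u)|\le 1-\frac{C_*}{2K^{2+b}R^b}<1,
\]
from which the announced $\limsup$ bound follows.

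The main technical point is the quadratic inequality, classical but essential here: without a device of this sort that relates $|\phi(u)|$ at different scales, the mean weak Cramer condition, which only controls large arguments, could not be propagated to the bounded annulus of the conclusion. The moment hypothesis $\sup_n \rho_1(n)<\infty$, although present in the statement, does not seem to play an essential role in this argument; it would however provide a uniform Lipschitz bound for $u\mapsto\frac{1}{n}\sum_i|\phi_{X_{i,n}}(u)|$ that could be used in an alternative proof, and it is certainly needed downstream in the small-ball and Edgeworth developments of Sections \ref{sec.smallball} and \ref{sec.edgeworth}.
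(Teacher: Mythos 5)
Your proof is correct, and it takes a genuinely different route from the paper's. The paper argues by contradiction: it extracts a maximizing sequence $u_k\to u^*$ in the annulus, uses the moment bound $\sup_n\rho_1(n)<\infty$ to get equicontinuity of $u\mapsto\frac1n\sum_i|\phi_{X_{i,n}}(u)|$ and tightness of the symmetrized, randomly-indexed variables, passes to a weak limit $W$ with $|\phi_W(u^*)|=1$, deduces that $\lambda\mapsto|\phi_W(\lambda u^*)|$ is periodic so that $|\phi_W(nu^*)|=1$ for all integers $n$, and then contradicts the mean weak Cramer condition applied at the fixed points $nu^*$. You instead work directly at finite $n$ with the doubling inequality $1-|\phi_X(Ku)|^2\le K^2\bigl(1-|\phi_X(u)|^2\bigr)$ (whose symmetrization proof is exactly the computation the paper itself performs in the proof of Proposition \ref{genericity1}), choosing a single integer $K$ that pushes the whole annulus past the Cramer threshold $R_*$. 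Interestingly, both proofs rest on the same underlying idea of probing integer multiples of points of the annulus, but you do it quantitatively at the level of the averages $\Phi_n$, whereas the paper does it qualitatively on the limit object $W$. Your argument is shorter, yields an explicit uniform bound $1-\frac{C_*}{2K^{2+b}R^b}$ rather than a bare strict inequality, and, as you note, dispenses with the hypothesis $\sup_n\rho_1(n)<\infty$, which in the paper is genuinely used (for the Lipschitz/ tightness steps). The only point worth flagging is a quantifier reading of Definition \ref{def.WMCramer}: your argument needs the bound \eqref{eq.WMCramer} to hold, for $n$ large, uniformly over the compact set $\{Ku:\ r\le\|u\|_2\le R\}$, while the paper's contradiction argument only invokes it at fixed points $t=nu^*$; since the uniform-in-$t$ reading is the one the paper uses in the proofs of Theorems \ref{theo.smallball} and \ref{battar2}, this is not a gap, merely something to state explicitly.
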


\begin{proof}
Let us argue by contradiction and fix $0<r<R$. If  
\[
\limsup_{n \to +\infty} \sup_{r \leq ||u||_2 \leq R}  \frac{1}{n} \sum_{i=1}^n \left| \phi_{X_{i,n}}(u) \right| =1, 
\]
there exists an increasing subsequence $(n(k))_{k \geq 0}$ of integers such that 
\[
\lim_{k \to +\infty} \sup_{r \leq ||u||_2 \leq R}  \frac{1}{n(k)} \sum_{i=1}^{n(k)} \left| \phi_{X_{i,n(k)}}(u) \right| =1.
\]
Since the characteristic functions are continuous, for a fixed integer $k$, the above supremum is achieved at a point $u_k$ in the compact set $\mathcal C[r, R]:=\{ u \in \mathbb R^d, \, r \leq ||u||_2 \leq R\}$. Up to the extraction of another subsequence, we can thus suppose that the sequence $(u_k)_{k \geq 1}$ converges to a point $u^* \in \mathcal C[r,R]$. We have then
\[
\begin{array}{ll}
\displaystyle{\left| \frac{1}{n(k)} \sum_{i=1}^{n(k)} \left| \phi_{X_{i,n(k)}}(u_k) \right|-\left| \phi_{X_{i,n(k)}}(u^*) \right|  \right|} & \leq   \displaystyle{ \frac{1}{n(k)} \sum_{i=1}^{n(k)} \left| \phi_{X_{i,n(k)}}(u_k) - \phi_{X_{i,n(k)}}(u^*) \right| }  \\
\\
& \leq | u_k -u^*| \rho_1(n(k)).
\end{array}
\]
Since $\rho_1(n)$ is bounded, we have thus
\[
\lim_{k \to +\infty} \frac{1}{n(k)} \sum_{i=1}^{n(k)} \left| \phi_{X_{i,n(k)}}(u_k) \right| = \lim_{k \to +\infty} \frac{1}{n(k)} \sum_{i=1}^{n(k)} \left| \phi_{X_{i,n(k)}}(u^*) \right| = 1,
\]
from which, we deduce by Cauchy-Schwarz inequality that 
\begin{equation}\label{eqn.CS}
 \lim_{k \to +\infty} \frac{1}{n(k)} \sum_{i=1}^{n(k)} \left| \phi_{X_{i,n(k)}}(u^*) \right|^2 = 1.
\end{equation}
Let us now observe that $u \mapsto | \phi_{X_{i,n(k)}}(u)|^2$ can be interpreted as the Fourier transform of the symmetrized version of $X_{i,n(k)}$. Namely if $X_{i,n(k)}'$ is an independent copy of $X_{i,n(k)}$ and if we set $Z_{i,n(k)}:=X_{i,n(k)} - X_{i,n(k)}'$, we have for all $u \in \mathbb R^d$
\[
\left| \phi_{X_{i,n(k)}}(u) \right|^2 = \mathbb E [ e^{i u \cdot X_{i,n(k)}} ] \mathbb E [ e^{-i u \cdot X_{i,n(k)}'} ] = \mathbb E [ e^{i u \cdot (X_{i,n(k)} - X_{i,n(k)}')}  ]  = \phi_{Z_{i,n(k)}}(u).
\]
Moreover, the Cesaro average of the $\phi_{Z_{i,n(k)}}$ for $1\leq i \leq n(k)$ can also be interpreted as the Fourier transform: it is the Fourier transform of  $\phi_{Z_{N(k),n(k)}}$ where $N(k)$ is a random variable whose law is uniform in $\{1, \ldots, n(k)\}$, and which is independent of all the $X_{i,n(k)}$ and $X_{i,n(k)}'$ i.e. for all $u \in \mathbb R^d$
\begin{equation}\label{eqn.CS2}
 \frac{1}{n(k)} \sum_{i=1}^{n(k)} \left| \phi_{X_{i,n(k)}}(u) \right|^2 = \phi_{Z_{N(k),n(k)}}(u).
\end{equation}
By assumption $\sup_{n \geq 1} \rho_1(n)<+\infty$, hence the sequence $(Z_{N(k),n(k)})_{k \geq 1}$ is bounded in $\mathbb L^1(\Omega, \mathcal F, \mathbb P)$, so up to another extraction of a subsequence, we can suppose that it converges in distribution to a random variable with values in $\mathbb R^d$, say $W$. In other words, if $\phi_W$ denotes the characteristic function of $W$, for all $u \in \mathbb R^d$, we have
\begin{equation}\label{eqn.CS3}
\lim_{k \to +\infty} \phi_{Z_{N(k),n(k)}}(u) = \phi_W(u).
\end{equation}
Now, from Equations \eqref{eqn.CS}, \eqref{eqn.CS2} and \eqref{eqn.CS3}, we get that there exists $u^* \in \mathcal C[r, R]$ such that 
\begin{equation}\label{eqn.CS4}
|\phi_W(u^*)|=\left|\mathbb E [e^{i u^*\cdot W}]\right| = 1.
\end{equation}
This implies that $u^* \cdot W \mod 2\pi$ is constant almost surely, in other words, there exists $c \in \mathbb R$ such that 
$u^* \cdot W \in c + 2\pi \mathbb Z$ and there exists a sequence $0 \leq p_k \leq 1$ with $\sum_{k \in \mathbb Z} p_k=1$ such that, for all $\lambda \in \mathbb R$ : 
\[
\mathbb E \left[ e^{i \lambda u^* \cdot W }\right] = e^{i \lambda c} \sum_{k \in \mathbb Z} p_k e^{i 2\pi \lambda}.
\]
Taking the modulus, we get that the function 
\[
\lambda \mapsto |\phi_W(\lambda u^*) |=\left| \mathbb E \left[ e^{i \lambda u^* \cdot W} \right] \right|
\]
is $1-$periodic and by Equation \eqref{eqn.CS4}, we conclude that for all positive integer $n$
\[
|\phi_W( n u^*)|=\left|\mathbb E [e^{i n u^*\cdot W}]\right| = 1.
\]
But this is in contradiction with the fact that the initial sequence $X_{i,n}$ satisfy the mean weak Cramer condition. Indeed, for a fixed $n$, taking $u=n u^*$ in Equation \eqref{eqn.CS2}, we always have 
\begin{equation}\label{eqn.CS5}
\phi_{Z_{N(k),n(k)}}(n u^*) = \frac{1}{n(k)} \sum_{i=1}^{n(k)} \left| \phi_{X_{i,n(k)}}(n u^*) \right|^2 \leq \frac{1}{n(k)} \sum_{i=1}^{n(k)} \left| \phi_{X_{i,n(k)}}(n u^*) \right|.
\end{equation}
If the sequence $X_{i,n}$ satisfies the weak mean Cramer condition, for $n$ sufficiently large but finite, and for $k$ large enough, the right-hand side of  Equation \eqref{eqn.CS5} is bounded by 
\[
\frac{1}{n(k)} \sum_{i=1}^{n(k)} \left| \phi_{X_{i,n(k)}}(n u^*) \right| \leq 1 - \frac{C}{n^{b} ||u^*||_2^b} \leq 1 - \frac{C}{n^{b} R^b}
\]
whereas, by Equation \eqref{eqn.CS3}, the left-hand side of Equation \eqref{eqn.CS5} converges to one as $k$ goes to infinity, hence the result. 
\end{proof}


\section{Small ball estimates}

\label{sec.smallball}
Despite the richness of the class of random variables or vectors satisfying the weak Cramer condition, the latter is flexible enough to prove some fairly general results that are classical for continuous random variables but difficult to obtain as soon as the underlying variables have a discrete component. To illustrate this, we will establish in this section a small ball estimate for the normalized sum of independant random vectors belonging to the class $\mathcal C(d,\beta)$. 
This estimate will be the key estimate to derive an ``exact'' Kac formula that will enable us to evaluate the mean number of real zeros of random trigonometric polynomials, see Section \ref{sec.kacsmallball} below.

\begin{thm}\label{theo.smallball}
Let us consider a sequence of independent, centered random vectors $(X_{i,n})_{i \geq 1}$ with values in $\mathbb R^d$ such that
\begin{enumerate}
\item there exists $C>0$, such that $\sup_{n \geq 1} \rho_3(n) \leq C$,
\item there exists $c>0$ such that for $n$ large enough 
\[
\frac{1}{n} \sum_{i=1}^n \hbox{cov}(X_{i,n}) \geq c \, \textrm{Id}_{\mathbb R^d},
\]
\item the sequence $(X_{i,n})_{i \geq 1}$ belongs to the class $\overline{\mathcal C}(d,b)$.
\end{enumerate}
Then there exists a constant $\Gamma>0$ such that, for all $0 < \gamma < \frac{1}{b} + \frac{1}{2}$ and for $n$ large enough, we have the small ball estimate
\begin{equation}\label{eq.theosmall}
\mathbb P \left(   \frac{1}{\sqrt{n}} \left|\left| \sum_{i=1}^n X_{i,n} \right|\right| \leq \frac{1}{n^{\gamma}} \right) \leq \frac{\Gamma}{n^{d \gamma}}.
\end{equation}
\end{thm}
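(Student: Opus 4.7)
The plan is Fourier-analytic. Write $S_n := \sum_{i=1}^n X_{i,n}$ and set $\epsilon := n^{1/2-\gamma}$, so that the probability to bound is $\mathbb{P}(\|S_n\|_2 \leq \epsilon)$. I would start from the classical Esseen-type small ball inequality: there exist constants $C_d, \kappa > 0$ such that
\[
\mathbb{P}(\|S_n\|_2 \leq \epsilon) \leq C_d\,\epsilon^d \int_{\|u\|_2 \leq \kappa/\epsilon} |\phi_{S_n}(u)|\, du.
\]
Using independence $\phi_{S_n}(u) = \prod_{i=1}^n \phi_{X_{i,n}}(u)$ together with the AM-GM bound $|\phi_{S_n}(u)| \leq \bigl(\frac{1}{n}\sum_i |\phi_{X_{i,n}}(u)|\bigr)^n$, and noting that $\epsilon^d = n^{d(1/2-\gamma)}$, proving the theorem reduces to showing that the integral of $|\phi_{S_n}|$ over the ball of radius $\kappa n^{\gamma - 1/2}$ is $O(n^{-d/2})$.

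To estimate this integral I would partition the domain into three regions, bounding $\frac{1}{n}\sum_i |\phi_{X_{i,n}}(u)|$ differently in each. \emph{Near zero} ($\|u\|_2 \leq \delta_0$ for some small $\delta_0$): after symmetrization and a third-order Taylor expansion of the cosine, assumption (1) on $\rho_3(n)$ and the covariance lower bound (2) produce
\[
\frac{1}{n}\sum_{i=1}^n |\phi_{X_{i,n}}(u)| \leq 1 - \frac{c}{4}\|u\|_2^2
\]
uniformly in $n$, so that $|\phi_{S_n}(u)| \leq \exp(-cn\|u\|_2^2/4)$ and the change of variable $v = \sqrt{n}\,u$ makes this region contribute $O(n^{-d/2})$. \emph{Intermediate region} ($\delta_0 \leq \|u\|_2 \leq R_0$ for a fixed $R_0$): Proposition \ref{pro.cramerlocal} yields some $\eta > 0$ such that $\frac{1}{n}\sum_i |\phi_{X_{i,n}}(u)| \leq 1 - \eta$ on this compact annulus for $n$ large, hence $|\phi_{S_n}(u)| \leq e^{-\eta n}$ and the contribution is exponentially small. \emph{Far region} ($R_0 \leq \|u\|_2 \leq \kappa n^{\gamma - 1/2}$): the mean weak Cramer condition \eqref{eq.WMCramer} gives $|\phi_{S_n}(u)| \leq \exp(-Cn/\|u\|_2^b)$, and bounding the integral crudely by the volume of the annulus times the supremum of the integrand (attained near the outer boundary) produces a contribution of order $n^{d(\gamma-1/2)}\exp\bigl(-C n^{1-b(\gamma - 1/2)}\bigr)$.

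The threshold $\gamma < \frac{1}{b} + \frac{1}{2}$ surfaces exactly here: it is equivalent to $1 - b(\gamma - 1/2) > 0$, which guarantees that the stretched exponential in the far region decays faster than any polynomial and is therefore negligible compared to $n^{-d/2}$. Summing the three contributions and multiplying by $\epsilon^d = n^{d(1/2-\gamma)}$ then yields the announced bound $\Gamma n^{-d\gamma}$. The main obstacle I anticipate is ensuring each of the three bounds holds \emph{uniformly} in $n$: the small-$u$ analysis requires the cubic Taylor remainder to be dominated uniformly, which is permitted by $\sup_n \rho_3(n) < \infty$; the intermediate region requires Proposition \ref{pro.cramerlocal} to be applied carefully to the fixed compact annulus $\{\delta_0 \leq \|u\|_2 \leq R_0\}$; the far region, although it is the source of the delicate threshold on $\gamma$, is ultimately the simplest piece once the volume versus stretched-exponential trade-off is made explicit.
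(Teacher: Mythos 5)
Your proposal is correct and follows essentially the same route as the paper: reduce the small ball probability to an integral of $|\phi_{S_n}(u)|$, bound it via AM--GM by $\bigl(\tfrac1n\sum_i|\phi_{X_{i,n}}(u)|\bigr)^n$, and split into the same three regions handled respectively by the Taylor/third-moment/covariance estimate near zero, Proposition \ref{pro.cramerlocal} on the fixed annulus, and the mean weak Cramer condition at infinity, with the threshold arising from exactly the same trade-off $1-b(\gamma-\tfrac12)>0$. The only difference is the opening smoothing device: you invoke the multidimensional Esseen inequality with a sharp cutoff at radius $\kappa/\epsilon$, whereas the paper derives a Gaussian-windowed analogue directly from an exponential Markov bound and the Fourier self-duality of the Gaussian density (then optimizes $t=1/\delta$ and splits the tail integral at $n^{a}$ with $a<1/b$) -- a cosmetic variation that does not change the substance of the argument.
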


The proof of Theorem  \ref{theo.smallball} is given in Section \ref{sec.proofsmallball} below. It is inspired by Halasz method which allows to relate the small ball probability to the local and asymptotic behavior of the Fourier transform of the normalized sum of the $X_{i,n}$. On the one hand, the local behavior in the neighbourhood of zero of this Fourier transform is controlled thanks to the two first hypotheses of the mean third moment and the mean covariance. On the other hand, the mean weak Cramer condition then allows to control the behavior at infinity of the Fourier transform. The behavior of the Fourier transform outside the neightbourhood of zero and infinity is finally controlled thanks to the local Cramer bound establish in Proposition \ref{pro.cramerlocal}.

\begin{rmk}
Naturally, the small ball estimate of Theorem \ref{theo.smallball} is easy to obtain if the $X_i$ are continuous random variables with uniformly bounded densities. But this estimate is not trivial for discrete variables or even in the case of continuous random variables with non bounded densities. 
For example, in dimension $d=1$, for general random variables, as soon as $\gamma>1/2$, it is sharper than Berry-Esseen bounds which are of the type 
\[
\left| \mathbb P \left(   \frac{1}{\sqrt{n}} \left|\left| \sum_{i=1}^n X_i \right|\right| \leq \frac{1}{n^{\gamma}} \right) - \frac{cst}{n^{\gamma}} \right| \leq O\left(\frac{1}{\sqrt{n}} \right).
\]
\end{rmk} 
\begin{rmk}
Let us also note that the estimate of Theorem \ref{theo.smallball} is hopeless in the case where the random variables $X_i$ are lattice, which is a case where the weak mean Cramer condition clearly does not hold. For example, if the law of $X_i$ is uniform on $\{-1,+1\}$, we have for $\gamma>1/2$
\[
\mathbb P \left(   \frac{1}{\sqrt{n}} \left| \sum_{i=1}^n X_i \right| \leq \frac{1}{n^{\gamma}} \right) = \mathbb P\left(\sum_{i=1}^n X_i =0  \right) \approx \frac{1}{\sqrt{\pi n}}.
\]
\end{rmk} 

To conclude this section, let us illustrate Theorem  \ref{theo.smallball} by expliciting a small ball estimate for a random sum of cosine, the random coefficients being discrete, Bernoulli type, random variables.

\begin{exm}
Let us consider a sequence $(\varepsilon_k)_{k \geq 1}$ of independent and identically distributed random variables such that $\mathbb P( \varepsilon_k=1)=\mathbb P( \varepsilon_k=-1)=1/2$. Fix a prime number $p \geq 5$ and consider the sum
\[
S_n:= \sum_{k=1}^n \cos\left(\frac{2k\pi}{p}\right) \varepsilon_k. 
\]
The variables $X_k:=\cos(2k\pi/p) \varepsilon_k$ are independent, centered and they satisfy conditions 1 and 2 of Theorem \ref{theo.smallball}.
For all $k\geq 1$, the Fourier transform of $X_k$ is given by 
\[
\phi_{X_k}(t) = \cos\left(\cos\left(\frac{2k\pi}{p}\right)  t\right).
\]
It is periodic, taking value one at zero, and thus does not satisfies the weak Cramer condition \eqref{eq.WCramer} nor its average version \eqref{eq.WMCramer}. Therefore, we can not apply Theorem \ref{theo.smallball} directly. Nevertheless, we can always write  
\[
S_n= \sum_{k=0}^{\lfloor n/p \rfloor-1}  Y_k + R_n, \quad \text{where} \;\; Y_k := \sum_{\ell=p k +1}^{p(k+1)}  \cos\left(\frac{2\ell \pi}{p}\right) \varepsilon_{\ell},
\]
and where $R_n := S_n - S_{p \lfloor n/p \rfloor}$ is such that $|R_n| \leq p$ uniformly in $n$. The new variables $Y_k$ are still independent, centered and they satisfy conditions 1 and 2 of Theorem \ref{theo.smallball}. But as already noticed in Example \ref{ex.2} above, along a period, the atoms $\cos(2\ell\pi/p)$ are linearly independent over $\mathbb Q$ so that the variables $Y_k$ now do satisfy the weak Cramer condition as well as its average version. In conclusion, despite the fact that the entries are discrete Bernoulli type random variables, Theorem \ref{theo.smallball} applies and the random sum of cosines $S_n$ satisfies the small ball estimate \eqref{eq.theosmall}. 
\end{exm}


\section{Edgeworth expansion}

\label{sec.edgeworth}
Let us now consider another type of result which is usually stated under a classical Cramer condition, and which we will show to hold true under the weak mean Cramer condition introduced in Section \ref{sec.cramer}, namely the Edgeworth expansion for a sum independent random vectors. Edgeworth expansion is well known as a means for obtaining approximate tail probabilities of a random variable starting from information on the moments or cumulants of the latter.
\par
\medskip
In order to state the expansion result for the sum of independent vectors, we need to introduce a certain number of notations, which we adopt from the standard reference \cite{bhatta}. The cumulative distribution of the standard Gaussian variable will be denoted by $\Phi$. We consider a sequence $(X_i)_{i \geq 1}$ of independent and centered random vectors with values in $\mathbb R^k$, with positive definite covariance matrices and finite absolute $s$-th moments for some integer $s\ge 3$.  We denote by $V_n$ the mean covariance matrix and $B_n$ a root of its inverse, namely
\[
V_n := \frac{1}{n}\sum_{k=1}^n \text{cov}(X_k), \qquad B_n^2:=V_n^{-1},
\]
and we denote by $Q_n$ the law of the normalized sum
\[
Q_n\stackrel{law}{:=} \frac{1}{\sqrt{n}}B_n (X_1+\cdots+X_n).
\]
The average $\nu$-th cumulant of the sequence $B_n X_j$ will be denoted by $\bar{\chi}_{\nu,n}$. Following Equation (7.2) p. 51 of \cite{bhatta}, we consider the formal polynomials $\widetilde{P}_r (z,\{\bar{\chi}_{\nu,n}\})$ associated to these average cumulants as well as the signed measures $P_r (-\Phi,\{\bar{\chi}_{\nu,n}\})$ defined by Equation (7.11) p. 54. We then denote by $\widetilde{Q}_n$ the approximated law of $Q_n$ associated to the Edgeworth expansion, namely 
\[
\widetilde{Q}_n \stackrel{law}{:=} \sum_{r=0}^{s-2} n^{-\frac{r}{2}} P_r(-\Phi:\{\bar{\chi}_{\nu,n}\}).
\]
Note that the measure $\widetilde{Q}_n$ is no more a probability measure, still it admits a density with respect to the standard Gaussian measure $\rho(x,y)$ on $\mathbb R^2$. Namely there exists explicit polynomials $P_{l,n}$, whose coefficients depend on the average cumulants, such that
\begin{equation}\label{polyedge}
d \widetilde{Q}_n(x,y)=\left(1+\sum_{l=1}^{q-2} n^{-\frac l 2} P_{l,n} (x,y)\right) \rho(x,y) dx dy.
\end{equation}
For a measurable function $f$, and for $s>0$, we define  
\[
M_{s}(f):=\sup_{x\in \R^k} \frac{|f(x)|}{1+\|x\|^{s}} \in [0, +\infty].
\]
Finally, for $\lambda>0$ and $\varepsilon>0$, we consider the modulus of continuity and its Gaussian average
\[
\omega_f (x: \lambda):=\sup_{y\in B(x,\lambda)} f(y)-\inf_{y \in B(x,\lambda)}f(y),\quad \bar{\omega}_f(2\varepsilon : \Phi):=\int \omega_f(x: \varepsilon) d\Phi(x)
\]
Having introduced the above notations, we can now formulate the expansion result for independent random vectors, under the classical Cramer condition, as it is stated in  Theorem 20.6 of \cite{bhatta}.
\begin{thm}\label{battar}
Let $(X_n)_{n \geq 1}$ be a sequence of independent random vectors taking values in $\R^k$, having zero means and such that 
\begin{enumerate}
\item the smallest eigenvalue $\lambda_n$ of $V_n$ is bounded away from zero uniformly in $n$,
\item there exists an integer $s>2$ such that $\rho_s(n)$ is bounded away from infinity uniformly in $n$ and $\forall \epsilon >0$ we have
$
\lim_{n\to\infty}\frac{1}{n}\sum_{j=1}^n \E\left(\mathds{1}_{\{\|X_j\|>\epsilon \sqrt{n}\}} \|X_j\|^s\right)=0,
$
\item uniformly in $n$ large enough, $\phi_{X_n}$ satisfies the classical Cramer condition i.e.
\[
\forall R>0,\quad \limsup_{n\to\infty} \sup_{\|t\|>R}|\phi_{X_n}(t)|<1.
\]
\end{enumerate}
Then, for every real Borel function $f$ on $\R^k$ satisfying $M_{s'}(f)<\infty$ for some $0\le s' \le s$,
\begin{equation}\label{edgeworthpasiid1}
\left|\int f dQ_n-\int f d \widetilde{Q}_n\right|\le M_{s'}(f) \delta_1(n)+c(s,k) \bar{\omega}_f(2e^{-dn}:\Phi),
\end{equation}
where $\delta_1(n)=o(n^{-\frac{s-2}{2}})$, and where the positive constants $d$ and $c(s,k)$ are explicit.
\end{thm}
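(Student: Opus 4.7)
The plan is to prove \eqref{edgeworthpasiid1} via Fourier inversion combined with a standard smoothing inequality. Because $f$ is only measurable with polynomial growth, Fourier inversion cannot be applied directly; I would first mollify $f$ by convolving with a compactly supported smooth kernel $K_\varepsilon$ of scale $\varepsilon$, so that both $f*K_\varepsilon$ and its Fourier transform have good integrability. The cost of this smoothing step is bounded by the oscillation of $f$ over balls of radius $\varepsilon$, averaged against a Gaussian-like measure, which produces precisely the term $\bar\omega_f(2\varepsilon : \Phi)$ once $\varepsilon$ is chosen of order $e^{-dn}$. After smoothing, matters reduce to controlling an integral of the form
\[
\int_{\mathbb{R}^k} \bigl| \widehat{Q}_n(t) - \widehat{\widetilde{Q}}_n(t) \bigr| \, |\widehat{K}_\varepsilon(t)| \, dt,
\]
together with an elementary tail-in-$x$ estimate that uses $M_{s'}(f) < \infty$ and the fact that $\widetilde{Q}_n$ has Gaussian-type decay.

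The $t$-integration is naturally split into a central region $\|t\| \le T \sqrt{n}$ for a small fixed $T>0$ and a tail region $\|t\| > T\sqrt{n}$. In the central region, I would write $\widehat{Q}_n(t) = \prod_{j=1}^n \phi_{X_j}(B_n t/\sqrt{n})$, take logarithms, and Taylor expand the log-characteristic function of each $X_j$ up to order $s$, grouping terms by powers of $n^{-1/2}$ to obtain the formal cumulant expansion. Exponentiating and comparing term-by-term with the Edgeworth polynomials $P_r(-\Phi:\{\bar\chi_{\nu,n}\})$ that define $\widetilde{Q}_n$ via \eqref{polyedge}, the first $s-1$ orders match by construction, and the residual is of size $n^{-(s-2)/2}$ multiplied by a Gaussian factor $e^{-c\|t\|^2}$ coming from the uniform ellipticity hypothesis (1). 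Hypothesis (2), namely the Lindeberg-type condition, is what upgrades this $O(n^{-(s-2)/2})$ bound into a genuine $o(n^{-(s-2)/2})$, producing the factor $\delta_1(n)$ in \eqref{edgeworthpasiid1}; the truncation threshold $\epsilon\sqrt n$ in that hypothesis is precisely what is needed to dominate the Taylor remainder uniformly in $j$.

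In the tail region $\|t\| > T\sqrt{n}$, the approximating signed measure $\widetilde{Q}_n$ is already super-exponentially small because its Fourier transform carries a Gaussian factor, so the remaining task is to bound $|\widehat{Q}_n(t)|$. This is where hypothesis (3), the classical Cramer condition, is decisive: the uniform bound $\sup_{\|u\|>R}|\phi_{X_j}(u)| \le 1-\eta$ for some $\eta > 0$, applied to a positive fraction of the factors in $\prod_j \phi_{X_j}(B_n t/\sqrt n)$, yields $|\widehat{Q}_n(t)| \le e^{-dn}$ for some $d>0$ depending on $T$ and $R$, uniformly in $t$ in this range. Choosing $\varepsilon = e^{-dn}$ at the smoothing step then produces the prefactor of $\bar\omega_f(2e^{-dn}:\Phi)$. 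The main technical obstacle in carrying this out rigorously is the bookkeeping in the central region: one must verify that the formal cumulant expansion of $\widehat{Q}_n$ matches term-by-term the formal polynomial structure of $\widetilde{Q}_n$ in the \emph{non-identically distributed} setting, tracking dependence on $n$ through the averages $\bar\chi_{\nu,n}$, and extracting a genuine $o(\cdot)$ decay from the Lindeberg-type condition (2) rather than the naive $O(\cdot)$ that a bare $s$-th moment bound would give.
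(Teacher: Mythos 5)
Your overall route — mollify $f$ with a kernel $K_\varepsilon$, pay $\bar\omega_f(2\varepsilon:\Phi)$, split the Fourier integral at $\|t\|\asymp\sqrt n$, match cumulants in the central zone and invoke the Cramer condition in the tail with $\varepsilon=e^{-dn}$ — is exactly the Bhattacharya--Rao strategy, and indeed the paper does not reprove Theorem \ref{battar}: it quotes it from \cite{bhatta} and only revisits the single step where the Cramer condition enters (see Remark \ref{rem.controlI1} and the proof of Theorem \ref{battar2}). So your sketch is aligned with the intended proof in spirit. However, as a proof it has two genuine gaps in the reduction step. First, for an unbounded $f$ with only $M_{s'}(f)<\infty$, the smoothing inequality does \emph{not} reduce matters to the unweighted quantity $\int|\widehat{Q}_n-\widehat{\widetilde Q}_n|\,|\widehat K_\varepsilon|\,dt$: the polynomial weight $1+\|x\|^{s'}$ must be converted into differentiation on the Fourier side, so one needs to control $\int |D^{\beta-\alpha}(\widehat Q_n-\widehat{\widetilde Q}_n)|\,|D^{\alpha}\widehat K_\varepsilon|\,dt$ for multi-indices up to order roughly $s+k+1$; this is precisely why the critical integral $I_1$ in the paper (following (20.36) of \cite{bhatta}) involves derivatives of both transforms, with the kernel bound $|D^\alpha\widehat K_\varepsilon|\le \varepsilon^{|\alpha|}c_3(s,k)e^{-(\varepsilon\|t\|_2)^{1/2}}$.

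Second, and consequently, your plan of Taylor-expanding $\log\phi_{X_j}$ and differentiating $\widehat Q_n$ to high order cannot be carried out on the original variables: under only an $s$-th moment bound the characteristic functions $\phi_{X_j}$ have no derivatives beyond order $s$, while the weighted smoothing argument requires more. This is the reason for the truncation-and-centering step ($Z_{j,n}:=X_{j,n}\mathds{1}_{\{\|X_{j,n}\|\le \cdot\}}-\E[\cdots]$) that the paper explicitly flags as the technical backbone of the Bhattacharya--Rao proof; the Lindeberg-type hypothesis (2) is then used not merely to upgrade the central-zone remainder from $O$ to $o(n^{-(s-2)/2})$, but to show that truncation perturbs the averaged cumulants and moments negligibly, and the Cramer bound must be transferred from $\phi_{X_j}$ to the truncated characteristic functions (the analogue of Lemma \ref{lma.cramertronc}). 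A smaller but real point: with $\varepsilon=e^{-dn}$ the kernel bounds degrade like $\varepsilon^{-k}=e^{dkn}$ over the tail region, so $d$ must be chosen small compared with the exponential gain $(1-\eta)^{cn}$ furnished by hypothesis (3); your sketch asserts the conclusion without this tuning, though that part is routine once the derivative-level estimates above are in place.
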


\begin{rmk} \label{rem.controlI1}
Under this general formulation, the proof of Theorem \ref{battar} in \cite{bhatta} is quite long and delicate. 
It is technically based on a truncation and centering argument and thus simplifies greatly if the random  variables $X_i$ are bounded. 
The Cramer condition 3. of the above statement concerning the characteristic functions $\phi_{X_n}$ appears at a unique critical point in the proof which is explicitly pointed out by the authors, namely the control of the integral term $I_1$ of Equation (20.36) p. 211. The rest of the proof only uses the independence and moment hypotheses. 
\end{rmk}

It turns out that the above classical Cramer condition in Theorem \ref{battar} is not a necessary condition to get a valid Edgeworth expansion. The control of the integral term $I_1$ mentionned in the above Remark \ref{rem.controlI1} can in fact be achieved under the weak mean Cramer condition \eqref{eq.WMCramer} introduced in Section \ref{sec.cramer}. The proof of Theorem 20.6 of \cite{bhatta} can indeed be adapted to prove the following result.

\begin{thm}\label{battar2}
Let $(X_{i,n})_{1\leq i\leq n}$ be a sequence of independent random vectors taking values in $\R^k$, having zero means and such that 
\begin{enumerate}
\item the smallest eigenvalue $\lambda_n$ of $V_n=n^{-1} \sum_{i=1}^n \text{cov}(X_{i,n})$ is bounded away from zero uniformly in $n$,
\item there exists an integer $s>2$ such that $\rho_s(n)$ is bounded away from infinity uniformly in $n$ and $\forall \epsilon >0$ we have
\[
\lim_{n\to\infty}\frac{1}{n}\sum_{i=1}^n \E\left(\mathds{1}_{\{\|X_{i,n}\|>\epsilon \sqrt{n}\}} \|X_{i,n}\|^s\right)=0,
\]
\item the sequence $(X_{i,n})_{1\leq i\leq n}$ belongs to the class $\overline{\mathcal C}(d,b)$ where $b$ is such that
\begin{equation}\label{hypo-b-edgeworth}
\frac 3 2 + \frac 1 b - \frac s 2 >0.
\end{equation}
\end{enumerate}
Then, for every real Borel function $f$ on $\R^k$ satisfying $M_{s'}(f)<\infty$ for some $0\le s' \le s$,
\begin{equation}\label{edgeworthpasiid}
\left|\int f dQ_n-\int f d \widetilde{Q}_n\right|\le M_{s'}(f) \delta_1(n)+c(s,k) \bar{\omega}_f(2 n^{-\frac{s-2}{2}}:\Phi),
\end{equation}
where $\delta_1(n)=o(n^{-\frac{s-2}{2}})$, and $c(s,k)$ is an explicit positive constant. In particular, if there is a constant $C$ such that $\bar{\omega}_f(2 \varepsilon: \Phi) \leq C \varepsilon$ for small enough $\varepsilon$, we have
\begin{equation}\label{edgeworthpasiid2}
\left|\int f dQ_n-\int f d \widetilde{Q}_n\right| =  O\left(n^{-\frac{s-2}{2}}\right).
\end{equation}
\end{thm}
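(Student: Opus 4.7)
The plan is to retrace the proof of Theorem 20.6 in \cite{bhatta} almost verbatim, following the roadmap indicated in Remark \ref{rem.controlI1}: the classical Cramer condition intervenes there only in the control of a single integral $I_1$ from equation (20.36) on page 211, so it is enough to show that $I_1$ can also be controlled under our weaker assumption (3). All the other ingredients (truncation and centering of the $X_{i,n}$ at scale $\epsilon\sqrt n$, Fourier inversion of the smoothed difference $Q_n-\widetilde Q_n$ against polynomial-growth test functions, and the bounds on the companion integrals $I_2,I_3,\dots$) rely exclusively on the moment and non-degeneracy assumptions (1) and (2), which we impose verbatim.

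The substitute for the classical Cramer bound is the following subexponential estimate on the Fourier transform of the normalized sum. By the arithmetic--geometric mean inequality and the mean weak Cramer condition \eqref{eq.WMCramer}, for every $u\in\mathbb R^k$ with $\|u\|_2\ge R$ and $n$ large enough,
\[
\prod_{i=1}^n |\phi_{X_{i,n}}(u)| \;\le\; \left(\tfrac{1}{n}\sum_{i=1}^n |\phi_{X_{i,n}}(u)|\right)^{n} \;\le\; \left(1-\tfrac{C}{\|u\|_2^{b}}\right)^{n} \;\le\; \exp\!\left(-\tfrac{Cn}{\|u\|_2^{b}}\right).
\]
Specializing to $u=B_n t/\sqrt n$ and using that Hypothesis (1) forces $\|B_n\|_{\mathrm{op}}$ to be uniformly bounded, one obtains
\[
|\phi_{Q_n}(t)| \;\le\; \exp\!\left(-\tfrac{C' n^{1+b/2}}{\|t\|_2^{b}}\right) \qquad \text{for } \|t\|_2\ge R'\sqrt n .
\]
On the intermediate annulus $R \le \|t\|_2 \le R'\sqrt n$ the stronger exponential bound $|\phi_{Q_n}(t)|\le\rho^n$ (with $\rho<1$) coming from Proposition \ref{pro.cramerlocal} is more than enough.

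I would then plug these bounds into the integral $I_1$, whose integrand is the product of $|\phi_{Q_n}(t)|$ with a polynomial in $t$ of degree $O(s)$ arising from the Edgeworth correction polynomials $P_{l,n}$ in \eqref{polyedge} and from the factor $M_{s'}(f)$. After the change of variable $t=\sqrt n\,\tau$ that puts the critical region on the scale $\|\tau\|_2\ge R'$, the integrand is dominated by $\exp(-C'n/\|\tau\|_2^{b})\cdot (\sqrt n\,\|\tau\|_2)^{O(s)}$; a standard Laplace-type computation then yields the bound $o(n^{-(s-2)/2})$ precisely under condition \eqref{hypo-b-edgeworth}. The companion statement \eqref{edgeworthpasiid2} is then a direct consequence of \eqref{edgeworthpasiid}, since a Lipschitz-type control $\bar\omega_f(2\varepsilon:\Phi)\le C\varepsilon$ turns the second term into an $O(n^{-(s-2)/2})$ contribution.

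The main obstacle, and what makes this adaptation non-trivial, is that unlike in the classical Cramer setting where $|\phi_{Q_n}|$ is exponentially small in $n$ (so any polynomial factor in $t$ is instantly absorbed), here the decay is only stretched-exponential with an exponent that \emph{itself decreases} as $\|t\|_2$ grows. Tracking the competition between the factor $n^{1+b/2}$ in the exponent and the polynomial growth $\|t\|_2^{O(s)}$ up to the truncation scale is exactly what calibrates the numerical condition \eqref{hypo-b-edgeworth}.
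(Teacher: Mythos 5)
Your high-level strategy is indeed the paper's: follow Bhattacharya--Rao, isolate the single point where the classical Cramer condition is used (the integral $I_1$ of (20.36)), and replace it by the weak mean condition via the arithmetico-geometric inequality, with Proposition \ref{pro.cramerlocal} covering the intermediate annulus. But there are two genuine gaps, and they sit exactly where the paper's proof in Section \ref{sec.proofedgeworth} has to do real work. First, $I_1$ is built from the characteristic function $\widehat{Q}_n'$ of the normalized sum of the \emph{truncated and centered} variables $Z_{i,n}$, not of the original $X_{i,n}$. Your stretched-exponential bound is derived for $\phi_{Q_n}$, i.e.\ for the untruncated sum, and it does not transfer for free: truncation perturbs each characteristic function, and Lemma \ref{lma.cramertronc} of the paper shows that the averaged weak Cramer bound survives only on the range $\|t\|_2\lesssim n^{s/b}/\rho_s(n)$, because the additive truncation error $2\rho_s(n)/n^{s}$ swamps $A/\|t\|_2^{b}$ beyond that radius. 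This forces a \emph{third} frequency region, handled solely by the kernel decay $e^{-(\varepsilon\sqrt n\,\|t\|_2)^{1/2}}$; your two-region split, which asserts the bound $\exp\left(-C'n^{1+b/2}/\|t\|_2^{b}\right)$ for all $\|t\|_2\ge R'\sqrt n$, relies on an estimate that is false for the object actually appearing in $I_1$.

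Second, $I_1$ involves high-order derivatives $D^{\beta-\alpha}\widehat{Q}_n'$ (up to order $s+k+1$, matching the kernel estimate \eqref{eq.estiK}), not $|\widehat{Q}_n'|$ itself; the ``polynomial in $t$'' you invoke belongs to the Edgeworth measure $\widetilde{Q}_n$ and enters other integrals. To make the Cramer-type decay bite one must expand $D^{\beta-\alpha}\widehat{Q}_n'$ by the Leibniz rule, bound each differentiated factor by $|D^{\gamma_i}\phi_{i,n}(t/\sqrt n)|\le 2^{|\gamma_i|}$ --- a bound which itself uses the boundedness of the truncated $Z_{i,n}$, so truncation cannot be dispensed with --- and keep at least $n-|\beta-\alpha|$ undifferentiated factors on which to run the arithmetico-geometric argument. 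Without this step the ``standard Laplace-type computation'' has nothing of the correct structure to act on. A smaller correction: the calibration of \eqref{hypo-b-edgeworth} is not a competition with a polynomial factor $\|t\|_2^{O(s)}$, but with the smoothing scale $\varepsilon_n=n^{-\frac{s-2}{2}}$ (forced by the target error in $\bar{\omega}_f$): one needs the kernel decay to take over before $\|t\|_2\sim n^{\frac1b+\frac12}$, where the stretched-exponential Cramer decay becomes ineffective, which is precisely $\frac32+\frac1b-\frac s2>0$; the contribution of $I_1$ is then $o(n^{-\alpha})$ for every $\alpha$, the final $O(n^{-\frac{s-2}{2}})$ coming from the term $\bar{\omega}_f(2\varepsilon_n:\Phi)$.
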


\begin{rmk}
Note that in the classical version of Edgeworth expansion stated in Theorem \ref{battar}, the control between the integral of $f$ against $Q_n$ and its analogue against $\widetilde{Q}_n$ is a little $o$ of $n^{-\frac{s-2}{2}}$. In the last result, we only get a big $O$ of $n^{-\frac{s-2}{2}}$. This is exactly the price to pay to consider random entries that only satisfy the mean weak Cramer condition \eqref{eq.WMCramer} instead of the classical  Cramer condition (\ref{eq.cramer}). Nevertheless, this little loss allows to consider Edgeworth expansions in the Kolmogorov metric for discrete variables, which is new and has its own interest.
\end{rmk}

The proof of Theorem \ref{battar2} is given in Section \ref{sec.proofedgeworth} below.


\section{Application to zeros of trigonometric polynomials}

\label{sec.poly}

This section is devoted to the proof of Theorem \ref{theo.universalite}, establishing the universality of the mean number of real zeros of random trigonometric polynomials. The starting point of our proof is the well known Kac-Rice formula that allows to express the number of zeros of a smooth function as a limit of an integral with parameter, see \cite{kackac,rice}. As already noticed in the introduction, and as explained the first Section \ref{sec.poly.kac} below, our approach is based on the simple fact that the Kac-Rice formula is in fact ``exact'' i.e. non asymptotic, as soon as the parameter appearing in the limit is smaller that a fixed threshold. In the same way, in the random case, the formula can then be made ``exact'' with high probability, provided we can control this threshold. This is precisely the object of Section \ref{sec.kacsmallball}  below, where this control is obtained using the small ball estimate given by Theorem \ref{theo.smallball}.
Starting from this stochastic estimate of the threshold, the strategy of the proof, which is given in Section \ref{sec.polyedge}, can be divided into the three steps that  are developped in three subsections, namely
\begin{itemize}
\item Section 5.3.1. We exhibiting a non asymptotic version of the Kac-Rice formula.
\item Section 5.3.2. Using the Edgeworth expansion given by Theorem \ref{battar2}, we replace the functional of the general entries in the exact Kac-Rice formula by the analogue functional of Gaussian entries.
\item Section 5.3.3. We perform the asymptotic developpement of the Gaussian Kac-Rice functional. 
\end{itemize}

\subsection{General facts about the Kac-Rice counting formula}\label{sec.poly.kac}
Although it is well known, we recall here the Kac-Rice counting formula and we give its proof because the latter actually contains a slight reinforcement which is the cornerstone of our approach.

\begin{lma}\label{Kac}
Let $f\in \mathcal{C}^1([a,b],\R)$ such that for all $x\in [a,b]$, $|f(x)|+|f'(x)|>0$ and $f(a)f(b)\neq 0$. Let $\mathcal Z([a,b])$ denote the number of roots of the equation $f(x)=0$ on the interval $[a,b]$. Then,
\begin{equation}\label{Kac-counting}
\mathcal Z([a,b])=\lim_{\delta\to 0} \int_a^b |f'(x)| \textbf{1}_{\{|f(x)|<\delta\}} \frac{dx}{2\delta}.
\end{equation}
\end{lma}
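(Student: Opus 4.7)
The plan is to prove a pointwise equality $\mathcal Z([a,b])=\int_a^b |f'(x)| \mathbf{1}_{\{|f(x)|<\delta\}} \frac{dx}{2\delta}$ for all $\delta$ below some explicit threshold depending on $f$, which is strictly stronger than the limit statement and matches the ``exact'' perspective emphasized in the introduction.

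First I would argue that the zero set of $f$ in $[a,b]$ is finite. The hypothesis $|f|+|f'|>0$ combined with $f\in\mathcal{C}^1$ forces $f'(x_i)\neq 0$ at every zero $x_i$, so the implicit function theorem (or just the local strict monotonicity of $f$) makes each zero isolated. Compactness of $[a,b]$ then yields only finitely many, say $x_1<\cdots<x_N$ with $N=\mathcal Z([a,b])$, all in the open interval $(a,b)$ thanks to $f(a)f(b)\neq 0$.

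Next, by continuity of $f'$, for each $i$ I pick $\eta_i>0$ small enough that $f'$ keeps a constant sign and $|f'|\geq c_i>0$ on $I_i:=[x_i-\eta_i,x_i+\eta_i]$, the intervals $I_i$ are pairwise disjoint, and $I_i\subset(a,b)$. Then $f$ is a $\mathcal{C}^1$-diffeomorphism from $I_i$ onto its image $J_i=f(I_i)$, an interval containing $0$ in its interior. On the compact set $K:=[a,b]\setminus\bigcup_i(x_i-\eta_i/2,x_i+\eta_i/2)$, the continuous function $|f|$ is bounded below by some $m>0$ since $f$ vanishes only at the $x_i$. Let me also set $m_i:=\min(|f(x_i-\eta_i)|,|f(x_i+\eta_i)|)>0$.

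Now fix any $\delta<\min(m,m_1,\ldots,m_N)$. Then $\{|f|<\delta\}\cap K=\emptyset$, so $\{|f|<\delta\}\subset\bigcup_i I_i$, and inside each $I_i$ the preimage $\{|f|<\delta\}\cap I_i$ is contained strictly in the interior, with $f$ restricted to it being a diffeomorphism onto $(-\delta,\delta)$. The change of variable $u=f(x)$, $du=f'(x)dx$ (with $|f'|=|du/dx|$) yields
\begin{equation*}
\int_{\{|f|<\delta\}\cap I_i}|f'(x)|\,dx=\int_{-\delta}^{\delta}du=2\delta.
\end{equation*}
Summing over $i$ and dividing by $2\delta$ gives $\int_a^b |f'(x)|\mathbf{1}_{\{|f(x)|<\delta\}}\frac{dx}{2\delta}=N=\mathcal Z([a,b])$, exactly. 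Letting $\delta\to 0$ is then a formality and the lemma follows.

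I do not anticipate any real obstacle here: everything is a straightforward consequence of the inverse function theorem and elementary compactness. The conceptual content of the argument, which will be used in the next section, is that the formula is non-asymptotic as soon as $\delta$ lies below an explicit threshold determined by the minimum of $|f|$ outside small neighborhoods of its zeros and by the width of the intervals on which $f$ is a local diffeomorphism. In the random setting this threshold becomes a random variable, and the small ball estimate of Theorem \ref{theo.smallball} will be exactly what is needed to control it with high probability.
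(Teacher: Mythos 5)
Your proof is correct and, like the paper's, it actually establishes the stronger non-asymptotic fact that the Kac integral equals $\mathcal Z([a,b])$ exactly for every $\delta$ below a positive threshold; but the route, and in particular the threshold, differ from the paper's. You localize around the finitely many nondegenerate zeros, fix neighborhoods $I_i$ on which $f$ is a diffeomorphism, and compute each contribution as $\int_{-\delta}^{\delta}du=2\delta$ by a change of variables, so your exactness threshold $\min(m,m_1,\ldots,m_N)$ depends on the auxiliary choice of the $\eta_i$. The paper instead decomposes the sublevel set $\{|f|<\delta\}$ into its connected components and shows that, as soon as $\delta<\min(\omega(f),|f(a)|,|f(b)|)$ with $\omega(f)=\inf_{[a,b]}(|f|+|f'|)$, each component carries exactly one zero and contributes exactly $2\delta$ (on a component $f'$ cannot vanish, so $f$ is monotone and runs from $\pm\delta$ to $\mp\delta$). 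This difference is immaterial for the lemma as stated, but it matters for the sequel: the paper's threshold is the explicit functional $\omega(f)$, which is exactly the quantity $\omega_n(U)$, $\omega_n(V)$ that is later bounded below with high probability via the small ball estimates (Theorems \ref{estideltagaussien} and \ref{thm:estideltagene}) and used to define the event $E_n$ in Section \ref{sec.polyedge}; your threshold is not directly expressed in terms of $\inf(|f|+|f'|)$, so to feed the probabilistic argument you would either need to rework your localization to extract the $\omega(f)$ threshold (essentially reproducing the paper's component argument) or prove a comparison between the two thresholds. For the limit statement itself, your argument is complete and correct.
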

\begin{proof}
Let us consider the infimum
\[
\omega(f):=\displaystyle{\inf_{x\in [a,b]} |f(x)|+|f'(x)|},
\]
which is attained for some $x_0 \in [a,b]$ so that $\omega(f)>0$ by assumption. Next, fix some  $0<\delta<\min(\omega,|f(a)|,|f(b)|)$ and set $\mathcal O=\big{\{}x\in [a,b] \,\left|\right.\, |f(x)| <\delta\big{\}}$. Since $\mathcal O$ is open, considering it connected components, we may write
$$\mathcal O=\bigcup_{k\in I}]a_k,b_k[,$$
where the intervals $]a_k,b_k[$ are disjoint. First note that for every $k\in I$, the end points $a_k,b_k \in ]a,b[$ otherwise it would contradict the fact that $\delta<\min(|f(a)|,|f(b)|)$. Besides, on each interval $]a_k,b_k[$, the derivative $f'$ cannot vanish, otherwise we would get a point $x$ where $|f(x)|+|f'(x)|<\omega(f).$ Thus, the function $f$ is monotonic on each $]a_k,b_k[$ with $f(a_k)=-\delta, f(b_k)=\delta$ in the increasing case or $f(a_k)=\delta, f(b_k)=-\delta$ in the decreasing case. Indeed, since $a_k,b_k$ are in $]a,b[$, if for instance $|f(a_k)|<\delta$ then by continuity of $f$ one could enlarge the interval $]a_k,b_k[$ so that it is still in $\mathcal O$. Since the interval $]a_k,b_k[$ are the connected components of $\mathcal O$ this is not possible and for all $k\in I$, $|f(a_k)|=|f(b_k)|=\delta$. Based on that, we may infer that each $]a_k,b_k[$ contains exactly one zero of $f$ and thus
\[
\mathcal Z([a,b])=\text{Card}(I).
\]
Moreover,
\begin{eqnarray*}
\int_a^b |f'(x)| \textbf{1}_{\{|f(x)|<\delta\}}\frac{dx}{2\delta}&=& \frac{1}{2\delta}\sum_{k \in I} \int _{a_k}^{b_k} |f'(x)| dx\\
&=& \text{Card}(I)\\
&=& \mathcal{Z}([a,b]),
\end{eqnarray*}
which concludes the proof. Note that, we have in fact proved the stronger statement that for each $\delta< \min(\omega(f),|f(a)|,|f(b)|)$, we have
\begin{equation}\label{Kac-strong}
\mathcal{Z}([a,b])=\int_a^b |f'(x)| \textbf{1}_{\{|f(x)|<\delta\}}\frac{dx}{2\delta}.
\end{equation}
This is precisely the kind of ``exact'' i.e. non asymptotic Kac-Rice formula we want to work with in the sequel.
\end{proof}

\subsection{Exactness threshold in the random Kac-Rice formula}\label{sec.kacsmallball}
The goal of the this section is to provide a sharp stochastic control of the above threshold $\min(\omega(f),|f(a)|,|f(b)|)$ in the case where the function $f$ is a random trigonometric polynomial. To do so, we need to introduce some notations. So let us first consider two independent sequences $(A_k)_{k \geq 1}$ and $(B_k)_{k\ge 1}$ of independent centered standard Gaussian variables. Let us also consider $(a_k)_{k\geq 1}$ and $(b_k)_{k \ge 1}$ two independent sequences of independent and identically distributed variables with common distribution such that
\begin{itemize}
\item $\E[a_1]=0$, $\E[a_1^2]=1$ and $\E [|a_1|^q]<\infty$ for some integer $q>2$,
\item the law of $a_1$ belongs to the class $\mathcal{C}(1,b)$ for some $b<1$.
\end{itemize}
Besides, we gives us $(\theta_k)_{k\in \N}$ any sequence in $\R^{\N}$. Now, for all $t \in \mathbb R$, we set 
\begin{eqnarray*}
u_n(t)&:=& \frac{1}{\sqrt{n}} \sum_{k=1}^n a_k \cos (kt+\theta_k)+b_k \sin ( kt+\theta_k),\\
v_n(t)&:=& \frac{1}{\sqrt{n}} \sum_{k=1}^n A_k \cos (kt+\theta_k)+B_k \sin (kt+\theta_k).
\end{eqnarray*}


In view of using Kac-Rice formula to establish that the mean number of real zeros of the random functions $u_n$ and $v_n$ is asymptotically the same, and in particular, in view of the observation (\ref{Kac-strong}) on the exactness of the formula, it is natural to try to exhibit a sharp sequence $\delta_n$ converging to zero such that:
\[
\lim_{n \to +\infty} \mathbb{P}\left( \inf_{t\in [a,b]} |u_n(t)|+|u_n'(t)| < \delta_n \right)  = \lim_{n \to +\infty}\mathbb{P}\left( \inf_{t\in [a,b]} |v_n(t)|+|v_n'(t)| < \delta_n \right) = 0.
\]
However, before tackling this question, we use a different normalization which basically preserves the number of zeros. Namely, we set
\begin{eqnarray*}
U_n(t)&:=& \frac{1}{\sqrt{n}} \sum_{k=1}^n a_k \cos \left(\frac{kt}{n}+\theta_k\right)+b_k \sin \left (\frac{kt}{n}+\theta_k\right),\\
V_n(t)&:=& \frac{1}{\sqrt{n}} \sum_{k=1}^n A_k \cos \left(\frac{kt}{n}+\theta_k\right)+B_k \sin \left(\frac{kt}{n}+\theta_k\right).
\end{eqnarray*}
Obviously the number of zeros in $[a,b]$ of $u_n,v_n$ coincide with the number of zeros in $[a n,b n]$ of $U_n,V_n$. Hence, we are naturally led to introduce the following quantities
\begin{eqnarray*}
\omega_n(U)&:=&\inf_{t\in[a n,b n]} |U_n(t)|+|U_n'(t)|,\\
\omega_n(V)&:=&\inf_{t\in[a n,b n]} |V_n(t)|+|V_n'(t)|,
\end{eqnarray*}
and to exhibit a sequence $\delta_n$ such that both $\P(\omega_n(U)<\delta_n)$ and $\P(\omega_n(V) < \delta_n)$ go to zero as $n$ goes to infinity.
To do so, we also need to introduce the following global suprema
\begin{eqnarray*}
M_n(U)&:=&\sup_{t \in [a n, b n]} |U_n'(t)|+|U_n''(t)|,\\ 
M_n(V)&:=&\sup_{t \in [a n, b n]} |V_n'(t)|+|V_n''(t)|.
\end{eqnarray*}
\label{estidelta:sec}

Before dealing with the more delicate estimation of $\omega_n(U)$ we will focus on the Gaussian case, i.e. the estimation $\omega_n(V)$. More precisely, we will prove that

\begin{thm}\label{estideltagaussien}
For any $\theta<-1$, we have
\begin{equation}\label{estiomegagaussien}
\lim_{n \to +\infty} \P\left(\omega_n(V) < n^\theta \right) = 0.
\end{equation}
\end{thm}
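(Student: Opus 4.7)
\medskip
\noindent
\textbf{Proof strategy.}
The plan is a classical discretization argument that couples a sharp one-point small-ball estimate for the Gaussian vector $(V_n(t),V_n'(t))$ with a Dudley-type bound on the sup-norm $M_n(V)$. Let me first compute the relevant pointwise covariance. Using the orthogonality relations $\cos^2+\sin^2 = 1$ and $\cos\sin - \sin\cos = 0$, the pair $(V_n(t),V_n'(t))$ is centered Gaussian with diagonal covariance matrix $\operatorname{diag}(1,\sigma_n^2)$ where
\[
\sigma_n^2 = \frac{1}{n^3}\sum_{k=1}^n k^2 \;\longrightarrow\; \frac{1}{3}.
\]
Since the determinant is bounded below for $n$ large, the joint density is uniformly bounded, giving the key single-point estimate
\[
\P\bigl(|V_n(t)|+|V_n'(t)| \le \eta\bigr) \;\le\; C\,\eta^2
\]
uniformly in $t\in[an,bn]$ and $n$.

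\medskip
\noindent
\textbf{Supremum control.}
Next, I would control $M_n(V)=\sup_{[an,bn]}(|V_n'|+|V_n''|)$. Both $V_n'$ and $V_n''$ have variance uniformly bounded in $t$ and $n$ (by a straightforward computation analogous to the one above, yielding $\mathbb{E}[V_n'(t)^2]\to 1/3$ and $\mathbb{E}[V_n''(t)^2]$ bounded), while $V_n''$ and $V_n'''$ have at worst $O(1)$ variance because the additional factors $k/n\le 1$ keep the sums bounded. Via a crude discretization of $[an,bn]$ at mesh $1/n^2$ (giving $O(n^3)$ grid points) and a union bound using the standard Gaussian tail $e^{-T^2/2\sigma^2}$, together with Lipschitz continuity of $V_n'$ and $V_n''$ on the grid controlled by the trivial bound $\sup|V_n'''|\le n^{3/2}\sup_{k}(|A_k|+|B_k|)$, one obtains for some constant $C$:
\[
\P\bigl(M_n(V) > C\sqrt{\log n}\bigr) \;=\; o(1).
\]

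\medskip
\noindent
\textbf{Combining via a grid.}
Set $T_n=C\sqrt{\log n}$ and place a grid $t_1,\dots,t_{N_n}$ on $[an,bn]$ with spacing $h_n=n^\theta/T_n$, so that $N_n = O(n^{1-\theta}\sqrt{\log n})$. On the event $\{M_n(V)\le T_n\}$, if there exists $t\in[an,bn]$ with $|V_n(t)|+|V_n'(t)|<n^\theta$, then choosing the nearest grid point $t_j$ we have by the mean value inequality
\[
|V_n(t_j)|+|V_n'(t_j)| \;\le\; n^\theta + h_n\cdot T_n \;\le\; 2\,n^\theta.
\]
Applying the one-point small-ball estimate and a union bound then yields
\[
\P\bigl(\omega_n(V) < n^\theta,\; M_n(V)\le T_n\bigr) \;\le\; N_n\cdot C\,(2n^\theta)^2 \;=\; O\!\left(n^{1+\theta}\sqrt{\log n}\right).
\]
Since $\theta<-1$, this tends to $0$, and adding the $o(1)$ contribution from $\{M_n(V)>T_n\}$ gives the claimed limit.

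\medskip
\noindent
\textbf{Main obstacle.}
The only delicate point is the sup-norm estimate on $M_n(V)$ at the right rate $\sqrt{\log n}$; the naive discretization I sketched is wasteful but suffices because the allowed exponent $1+\theta<0$ leaves generous slack. A cleaner approach (Dudley's entropy integral or Borell–TIS) would give $\mathbb{E}[M_n(V)]=O(\sqrt{\log n})$ directly from the uniformly bounded variance plus the $O(1)$ Lipschitz modulus after rescaling, but this refinement is unnecessary for the exponent stated in the theorem.
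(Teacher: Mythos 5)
Your argument is correct and follows the same overall skeleton as the paper's proof: discretize $[an,bn]$, transfer a small value of $|V_n|+|V_n'|$ to the nearest grid point using the Lipschitz bound $M_n(V)$, apply the two-dimensional Gaussian density bound $\P(|V_n(t)|+|V_n'(t)|\le \eta)\le C\eta^2$ coming from the covariance $\mathrm{diag}(1,\sigma_n^2)$, and take a union bound. Where you genuinely diverge is in the key auxiliary estimate on $M_n(V)$. You exploit Gaussianity (grid plus Gaussian tails, or Dudley/Borell--TIS) to get $\P\bigl(M_n(V)>C\sqrt{\log n}\bigr)=o(1)$, which then yields the clean bound $O\bigl(n^{1+\theta}\sqrt{\log n}\bigr)$ and the full range $\theta<-1$ in one stroke. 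The paper instead proves Lemma~\ref{estimax}, $\P(M_n(U)>\lambda)\le Cn/\lambda^q$, by a deterministic Sobolev-type inequality bounding the sup by $L^q$ norms of $U_n',U_n'',U_n'''$ together with Markov and Burkholder--Davis--Gundy; this only uses $q$ finite moments, so the very same lemma is reused verbatim in the non-Gaussian case (Theorem~\ref{thm:estideltagene}), and in the Gaussian case the range $\theta<-1$ is recovered by letting $q\to\infty$. So your route is sharper and more self-contained for this Gaussian statement, while the paper's choice is dictated by the later general-entries argument where your chaining bound would not be available. One cosmetic remark: the crude bound $\sup|V_n'''|\le n^{3/2}\sup_k(|A_k|+|B_k|)$ is valid but lossy (the natural bound is $\sqrt{n}\,\sup_k(|A_k|+|B_k|)$); either version makes your mesh-$n^{-2}$ discretization work, so this does not affect the proof.
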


\begin{proof}
To estimate the global infimum $\omega_n(V)$, we consider a regular subdivision of $[a n , b n]$, namely we write for some integer $p\ge 1$, which may depend on $n$ and which will be chosen later
\[
[a n,b n] =\bigcup_{k=0}^{p-1} \left[ t_k, t_{k+1} \right], \quad  \hbox{where} \quad t_k:= a n + k \frac{n (b-a)}{p}.
\]
On the one hand, we have naturally
\begin{eqnarray*}
\omega_n(V)&=& \min_{0\le k \le p-1} \omega_{n,k} (V), \quad \hbox{where} \quad \omega_{n,k}(V):=\inf_{t\in [t_k, t_{k+1}]}|V_n(t)|+|V_n'(t)|,
\end{eqnarray*}
and on the other hand, we also have 
\begin{eqnarray}\label{intermediate}
\nonumber\P\left(\omega_n(V) < \delta \right) &\le & \P\left(\omega_n(V) < \delta ,M_n(V)<\lambda\right)+\P\left(M_n(V)>\lambda\right)\\
\nonumber&\le& \P \left(\bigcup_{k=0}^{p-1} \{\omega_{n,k} (V) <\delta,M_n(V)<\lambda\}\right)+\P\left(M_n(V)>\lambda\right)\\
\nonumber&\le& \sum_{k=0}^{p-1} \P\left(\omega_{n,k} (V) <\delta,M_n(V)<\lambda\right)+P\left(M_n(V)>\lambda\right)\\
\nonumber&\le& \sum_{k=0}^{p-1} \P \left(\left|V_n(t_k)\right|+\left|V_n'(t_k)\right|<\delta+\lambda\frac{n(b-a)}{p}\right)+\P\left(M_n(V)>\lambda\right)\\
&\le & C p \left(\delta+\lambda \frac{n(b-a)}{p}\right)^2+\P\left(M_n(V)>\lambda\right),
\end{eqnarray}
where, in the last inequality, we have used the fact that for each $t\in\R$, the couple $(V_n(t),V_n'(t))$ is a Gaussian vector with covariance matrix
\begin{equation}\label{Cov}
\left(
\begin{array}{ll}
1 & 0\\
0 & \frac{1}{n}\sum_{k=1}^n \frac{k^2}{n^2}
\end{array}
\right).
\end{equation}
Indeed, since $(V_n(t),V_n'(t))$ has a uniformly bounded joint density, there exists an absolute constant $C>0$ such that for all $n,k \in \N^2$ :
\begin{equation}\label{eq.gaussianity}
\P \left(\left|V_n(t_k)\right|+\left|V_n'(t_k)\right|<\delta+\lambda\frac{n(b-a)}{p}\right)\le C \left(\delta+\lambda \frac{n(b-a)}{p}\right)^2.
\end{equation}
Now, we are left to estimate the remaining term $\P\left(M_n(V)>\lambda\right)$. This is the content of the next lemma. Note that we state and prove the result for the supremum $M_n(U)$ associated to general entries, which englobes the case of standard Gaussian entries, hence the estimation of $M_n(V)$.

\begin{lma}\label{estimax}
There exists some constant $C>0$ such that
\begin{equation}\label{estimax0}
\P(M_n(U) >\lambda) \le C\frac{n}{\lambda^q}.
\end{equation}
\end{lma}
\begin{proof}[Proof of Lemma \ref{estimax}]
Let us consider a (random) zero $z$ of the derivative $U_n'$ in the interval $[a n , b n]$. Note that the function $t \mapsto |U_n'(t)|^q$ is Lipschitz with derivative almost surely equal to $q |U_n'(t)|^{q-1} \text{sign}(U_n'(t))U_n''(t)$. Using Rademacher Theorem on Lipschitz functions and then H\"older and Young inequalities, we get 
\begin{eqnarray*}
|U_n'(t)|^q &=& q\int_z^t|U_n'(s)|^{q-1} \text{sign}(U_n'(s))U_n''(s)ds\\
&\stackrel{\text{Hölder}}{\le}& q \left(\int_z^t |U_n''(s)|^q ds\right)^{\frac{1}{q}} \left(\int_z^t |U_n'(s)|^q ds\right)^{\frac{q-1}{q}}\\
&\stackrel{\text{Young}}{\le}& q \left(\frac{\int_z^t |U_n''(s)|^q ds}{q}+\frac{\int_z^t |U_n'(s)|^q ds}{ \frac{q}{q-1}}\right).
\end{eqnarray*}
We thus deduce that
\begin{equation}\label{estimax1}
\sup_{[a n , b n]}|U_n'(t)| \le q^{\frac{1}{q}} \left( \int_{a n}^{b n} |U_n'(t)|^q dt+ \int_{a n}^{b n} |U_n''(t)|^q dt\right)^{\frac{1}{q}}.
\end{equation}
By using strictly the same arguments we also have
\begin{equation}\label{estimax2}
\sup_{[a n , b n]}|U_n''(t)| \le q^{\frac{1}{q}} \left( \int_{a n}^{b n} |U_n''(t)|^q dt+ \int_{a n}^{b n} |U_n'''(t)|^q dt\right)^{\frac{1}{q}}.
\end{equation}
Combining (\ref{estimax1}) and (\ref{estimax2}) yields
\begin{equation}\label{estimax3}
M_n(U) \le 2~q^{\frac{1}{q}}\left( \int_{a n}^{b n} \left(|U_n'(t)|^q  +|U_n''(t)|^q+ |U_n'''(t)|^q\right) dt\right)^{\frac{1}{q}}.
\end{equation}
Using Markov inequality, for some constant $C$ which may vary from line to line, we get 
\begin{eqnarray*}
\P(M_n(U) > \lambda) &\stackrel{(\ref{estimax3})}{\le}& \frac{C}{\lambda^q} \int_{a n} ^{ b n} \E \left[|U_n'(t)|^q+|U_n''(t)|^q+|U_n'''(t)|^q \right] dt\\
&\stackrel{\text{BDG}}{\le}& C \frac{n}{\lambda^q},
\end{eqnarray*}
where we have applied the Burkholder-Davis-Gundy inequalities for each fixed $t$ to the martingales $\sqrt{n} ( U_n(t),U_n'(t),U_n''(t))$, noticing that $\E \left[|U_n'(t)|^2+|U_n''(t)|^2+|U_n'''(t)|^2 \right]$ is bounded uniformly in $t$ and $n$.
\end{proof}
Let us go back to the proof of the Theorem \ref{estideltagaussien}. Plugging the estimate \eqref{estimax0} in Equation \eqref{intermediate}, we obtain for some constant $C$ only depending on $a,b$, that
\[
\P(\omega_n(V) < \delta ) <C\left(p \delta^2 +\lambda^2 \frac{n^2}{p}+ \frac{n}{\lambda^q}\right).
\]
Making the optimization in the parameter $p$, we get
$$\P(\omega_n(V) < \delta ) <C\left( \lambda \delta n + \frac{n}{\lambda^q} \right).$$
Let us choose $\lambda$ and $\delta$ of the form $\lambda=n^\rho$ and $\delta_n=n^\theta$ for some $\rho,\theta>0$. In order to obtain that $\P(\omega_n(V) < \delta_n)$ goes to zero as $n$ goes to infinity, we need the conditions $q \rho >1$ and $\theta + \rho <-1$ to be satisfied. As a result, for any fixed $q$, if $\theta<-\frac{q+1}{q}$ we may find $\rho>\frac 1 q$ such that $\theta+\rho<-1$. With this choice, $\P(\omega_n(V) < \delta_n)$ goes indeed to zero as $n$ goes to infinity. Now, since Gaussian variables have finite moments of any order, we can take $q$ arbitrarily large, which leads to the announced estimate.
\end{proof}
The next theorem generalizes Theorem \ref{estideltagaussien} on the behavior of the infimum $\omega_n(V)$ in the Gaussian case, to the case of the random polynomial $U_n$ with general entries. The proof is very similar to the one of Theorem \ref{estideltagaussien}, but the analogue of the crucial Gaussian estimate \eqref{eq.gaussianity} is now obtained as a consequence of Theorem \ref{theo.smallball}, i.e. the small ball estimate stated in Section \ref{sec.smallball} under the mean weak Cramer condition.
\begin{thm}\label{thm:estideltagene}
For any $\theta<-1-\frac{1}{q}$, if $\frac{1}{q}<\frac 1 b - \frac 1 2$ then
\begin{equation}\label{estideltagene}
\lim_{n \to +\infty}\mathbb{P}\left(\omega_n(u)<n^\theta\right)=0.
\end{equation}
\end{thm}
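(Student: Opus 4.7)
The plan is to mimic the proof of Theorem~\ref{estideltagaussien} almost verbatim, the only substantive change being to replace the Gaussian pointwise density bound \eqref{eq.gaussianity} by the small-ball estimate of Theorem~\ref{theo.smallball}. Starting from the regular subdivision $a n = t_0 < t_1 < \cdots < t_p = b n$ and Lemma~\ref{estimax} applied with $\lambda$ to be chosen, the same deterministic and union-bound argument as in the Gaussian case gives
\[
\mathbb{P}\bigl(\omega_n(U) < \delta\bigr) \leq \sum_{k=0}^{p-1} \mathbb{P}\!\left(|U_n(t_k)|+|U_n'(t_k)| < \delta + \lambda\tfrac{n(b-a)}{p}\right) + \frac{Cn}{\lambda^{q}}.
\]
What remains is thus to control each individual pointwise small-ball probability.

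The key step is to apply Theorem~\ref{theo.smallball} to the $\mathbb{R}^{2}$-valued random vectors
\[
X_{k,n}(t) := \Bigl(\,a_k \cos\!\bigl(\tfrac{kt}{n}+\theta_k\bigr)+b_k \sin\!\bigl(\tfrac{kt}{n}+\theta_k\bigr),\; \tfrac{k}{n}\bigl[-a_k \sin\!\bigl(\tfrac{kt}{n}+\theta_k\bigr)+b_k \cos\!\bigl(\tfrac{kt}{n}+\theta_k\bigr)\bigr]\,\Bigr),
\]
for which $(U_n(t), U_n'(t)) = \tfrac{1}{\sqrt n}\sum_{k=1}^n X_{k,n}(t)$. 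A direct computation yields $\operatorname{cov}(X_{k,n}(t)) = \operatorname{diag}(1, (k/n)^2)$, so the averaged covariance tends to $\operatorname{diag}(1, 1/3)$ uniformly in $t$, giving hypothesis~(2) of Theorem~\ref{theo.smallball}; the third moment bound follows from $q \geq 3$ and the boundedness of the trigonometric factors. For hypothesis~(3), note that $(a_k,b_k) \in \mathcal{C}(2,b)$ by the tensorization remark of Section~\ref{sec.cramer}, and that $X_{k,n}(t) = M_k(t) (a_k,b_k)^\top$ for an explicit matrix $M_k(t)$; since $\|M_k(t)^\top u\|$ is of order $\|u\|$ for a positive proportion of indices $k$ with $k/n$ bounded away from zero, the averaging $\frac{1}{n}\sum_k |\phi_{X_{k,n}(t)}(u)|$ inherits a weak Cramer bound uniformly in $t$. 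Choosing $n^{-\gamma} = \delta + \lambda n(b-a)/p$, Theorem~\ref{theo.smallball} (applied with $d=2$) then yields the pointwise bound $\mathbb{P}(|U_n(t_k)|+|U_n'(t_k)| \leq \epsilon) \leq \Gamma \epsilon^{2}$, valid as long as $\epsilon > n^{-1/b-1/2}$.

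Plugging this in and optimizing $p$ exactly as in the Gaussian case gives
\[
\mathbb{P}\bigl(\omega_n(U) < \delta\bigr) \;\leq\; C\Bigl(\lambda\, \delta\, n + \frac{n}{\lambda^{q}}\Bigr).
\]
Setting $\lambda = n^{\rho}$ and $\delta = n^{\theta}$, the two conditions $\rho > 1/q$ and $\theta + \rho < -1$ push $\theta$ just below $-1-1/q$, while the admissibility constraint $\delta > n^{-1/b-1/2}$ inherent to the small-ball estimate forces $\theta > -1/b-1/2$. The two requirements are simultaneously realizable precisely when $-1-1/q > -1/b-1/2$, that is, when $1/q < 1/b - 1/2$, which is exactly the standing hypothesis of the theorem.

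The main obstacle is the verification of the mean weak Cramer condition for the sequence $(X_{k,n}(t))_{1 \le k \le n}$ \emph{uniformly in} $t \in [an, bn]$, with constants $C$, $R$ independent of $t$. Indeed, uniformity in $t$ is essential because the number $p$ of subdivision points grows polynomially in $n$; moreover, for indices $k$ with $k/n$ very small the matrix $M_k(t)$ becomes nearly degenerate, so one must quantify that the contribution of such ``bad'' indices is asymptotically negligible compared to the bulk of indices for which $k/n$ is of order one. Once this uniform averaged Cramer bound on $\frac{1}{n}\sum_k |\phi_{X_{k,n}(t)}(u)|$ is secured, the remainder of the argument is a mechanical transcription of the Gaussian proof.
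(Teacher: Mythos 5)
Your proposal is correct and takes essentially the same route as the paper's own proof: the same subdivision/union-bound skeleton with Lemma \ref{estimax}, the same application of Theorem \ref{theo.smallball} to the vectors $X_{k,n}(t)$, with the uniform-in-$t$ mean weak Cramer bound obtained exactly as in the paper (only the indices with $k/n$ bounded away from zero are used, where the rotation--dilation structure makes $\|M_k(t)^{\top}u\|$ comparable to $\|u\|$ independently of $t$), and the same parameter optimization producing the constraint $\frac1q<\frac1b-\frac12$ with $\theta$ just below $-1-\frac1q$. The extension from that window of exponents to all $\theta<-1-\frac{1}{q}$ is immediate by monotonicity of the event in $\theta$, a step left implicit both in your argument and in the paper.
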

\begin{proof}
The beginning of the proof is identical to the one of the proof of theorem \ref{estideltagaussien} up to the fact that we cannot use anymore the fact that $(U_n(t),U_n'(t))$ has a bounded density to get the control of 
\[
\P \left(\left|U_n(t_k)\right|+\left|U_n'(t_k)\right|<\delta+\lambda\frac{n(b-a)}{p}\right).
\]
In order to bypass this major difficulty, which we do have to face in the discrete case, and as announced above, we shall rather use Theorem \ref{theo.smallball}. To do so, for all $t \in \mathbb R$, we define
\[
X_{i,n}(t):=\left(
\begin{array}{c}
\displaystyle{a_i \cos\left(\frac{it}{n}+\theta_i\right)+b_i \sin\left(\frac{it}{n}+\theta_i\right)} \\
\displaystyle{-\frac{i}{n} a_i \sin\left(\frac{it}{n}+\theta_i\right)+\frac{i}{n} a_i \cos\left(\frac{it}{n}+\theta_i\right) }
\end{array} \right).
\]
Let us check below that the three assumptions required by Theorem \ref{theo.smallball} are fulfilled. 
\begin{enumerate}[label=\roman*)]
\item Since the sine and cosine are bounded, there exists a constant $C>0$ such that for all $i \in \{1,\cdots,n\}$, we have $\mathbb{E}(\|X_{i,n}\|^3) \le C \, \E(|a_i|^3)$ and therefore, uniformly in $t$ and for all $n\geq 1$
\[
\rho_3(n):=\frac{1}{n} \sum_{i=1}^n \mathbb E[||X_{i,n}(t)||_2^3] \leq C.
\]
\item Otherwise, uniformly in $t$ we have 
\[
\frac{1}{n}\sum_{k=1}^n \text{cov}(X_{k,n}(t))
=
\left(
\begin{array}{ll}
1 & 0\\
0 & \frac{1}{n}\sum_{k=1}^n \frac{k^2}{n^2}
\end{array}
\right),
\]
whose diagonal terms are clearly bounded from below for $n$ large enough.
\item We are left to check that the weak mean Cramer condition is fulfilled. We denote by $\phi$ the common characteristic function of the entries $a_i$ and $b_i$. For any $t \in \mathbb R$, and for $s=(s_1,s_2)\in\R^2$ we have then 
\begin{eqnarray*}
\phi_{X_{i,n}(t)} (s)=\phi\left( \alpha_n(s,t) \right) \phi\left(\beta_n(s,t)\right) .
\end{eqnarray*}
where we have set
\[
\begin{array}{ll}
\alpha_n(s,t)&:=s_1 \cos\left(\frac t n+\theta_i\right)-s_2\frac{i}{n}\sin\left(\frac t n+\theta_i\right), \\
\\
\beta_n(s,t)&:= s_1 \sin\left(\frac t n+\theta_i\right)+s_2\frac{i}{n}\cos\left(\frac t n+\theta_i\right) .
\end{array}
\]
By assumption, the law of $a_1$ is in the class $\mathcal{C}(1,b)$. Hence, for $|t|$ large enough $|\phi(t)|\le 1-\frac{ K}{|t|^b}$ for some fixed constant $K$. In order to apply this to our context, we just need to know when $\alpha_n(s,t)$ or $\beta_n(s,t)$ are large enough. We first note that the vector $(\alpha_n(s,t),\beta_n(s,t) )$ is obtained from $s$ via the composition of a rotation of angle $\frac{t}{n}+\theta_i$, which preserves $|| \cdot||_2$ norm, and the dilatation $\text{Diag}(1,\frac i n)$. Hence, we may find positive constant $K$ and $M$ such that for all $i/n \geq  1/2$, for all $\|s\|>M$ and all $n \ge M$
\begin{equation}\label{hypocramer}
\left|\phi_{X_{i,n}(t)} (s)\right|\le 1-\frac{K}{\|s\|^b}.
\end{equation}
Therefore, for all $\|s\|>M$ and all $n \ge M$, we have
\begin{eqnarray*}\label{hypomeancramer}
\frac{1}{n}\sum_{i=1}^n\left|\phi_{X_{i,n}(t)} (s)\right|&\le& \left(1-\frac{\lfloor n /2\rfloor }{n} \right) \left(1-\frac{K}{ \|s\|^b}\right)+\frac{\lfloor n/2\rfloor }{n} \\
&\leq & 1-\frac{K}{ 2\|t\|^b},
\end{eqnarray*}
and the weak mean Cramer condition is indeed valid, so that Theorem \ref{theo.smallball} applies to the variables $X_{i,n}(t)$, for all $t \in \mathbb R^d$.
\end{enumerate}
Let us go back to the proof of Theorem \ref{thm:estideltagene}. Our starting point is the inequality 
\begin{equation*}
\mathbb{P}\left(\omega_n(U)<\delta\right)\le \sum_{k=0}^{p-1} \P \left(\left|U_n(t_k)\right|+\left|U_n'(t_k)\right|<\delta+\lambda\frac{n(b-a)}{p}\right)+ C\frac{n}{\lambda^q},
\end{equation*}
which may be established following the exact same arguments as the ones in the proof of Theorem \ref{estideltagaussien}. 
Next, we can choose the positive parameters $\lambda$, $p$ and $\delta$ depending on $n$ and of the form
$\lambda=n^\rho$, $p=n^x$, $\delta= n^{1+\rho-x}$
with $\rho>\frac 1 q$. In such a case, we have 
\[
\delta+\lambda \frac{n(b-a)}{p} =O \left( n^{1+\rho-x}\right). 
\]
Let us set $\gamma := x-1-\rho$ and introduce the two conditions
\[ (i) \quad \gamma < \frac{1}{b}+\frac{1}{2}, \qquad (ii)  \quad 2+2\rho<x.
\]
If condition $(i)$ is fulfilled, then Theorem \ref{theo.smallball} ensures that
\[
\P \left(\left|U_n(t_k)\right|+\left|U_n'(t_k)\right|<\delta+\lambda\frac{n(b-a)}{p}\right) \leq \frac{\Gamma}{n^{2\gamma}} = \Gamma n^{2+2\rho-2x},
\]
and thus 
\[
\sum_{k=0}^{p-1} \P \left(\left|U_n(t_k)\right|+\left|U_n'(t_k)\right|<\delta+\lambda\frac{n(b-a)}{p}\right)\leq  \frac{ p \Gamma}{n^{\gamma}}  = \Gamma n^{2+2\rho-x}\\
\]
If condition $(ii)$ is also fulfilled, then this last term goes to zero as $n$ goes to infinity. Finding a number $x$ satisfying both conditions $(i)$ and $(ii)$ is possible whenever
\[ 
\frac{3}{2}+\rho+\frac{1}{b}>2+2\rho \Longleftrightarrow\rho < \frac{1}{b}-\frac{1}{2},
\]
and finding such a $\rho$ is possible whenever 
\[
\frac{1}{q}<\frac{1}{b}-\frac{1}{2},
\] 
which is our assumption. In such a case $x$ can be chosen as close as wished of $2+2\rho$ and thus $\delta$ as close as $n^{-1 -1/q}$.
\end{proof}

\subsection{Universality of the mean number of zeros}\label{sec.polyedge}
Thanks to the estimate (\ref{estideltagene}), and thanks to the Edgeworth expansion stated in section \ref{sec.edgeworth}, we are now in position to give the proof the universality result. In the statement and in the proof below, $\mathcal{Z}_n([an,bn])$ denotes the number of roots of the rescaled polynomial $U_n$ inside the interval $[an,bn]$ which, as already noticed above, coincides with the number of zeros of $u_n$ in the interval $[a, b]$. The next theorem is thus exactly equivalent to Theorem \ref{theo.universalite} stated in the introduction.

\begin{thm}\label{theo.main}
Let $\mathcal{Z}_n([an,bn])$ denote the number of roots of the rescaled polynomial $U_n$ inside the interval $[an,bn]$, where the random entries admit a finite moment of order 5 and belong to the class $\mathcal C(1,b)$ with $0\leq b <1$. Then, we have
\[
\lim_{n \to \infty} \frac{\E\left(\mathcal{Z}_n([an,bn])\right)}{n} \sim \frac{b-a}{\pi\sqrt{3}}.
\]
\end{thm}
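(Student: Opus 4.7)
The plan follows the three-step strategy announced at the opening of Section \ref{sec.poly}.
\textbf{Step 1 (non-asymptotic Kac-Rice).} Fix any $\theta$ with $-3/2 < \theta < -6/5$ and set $\delta_n := n^\theta$. Theorem \ref{thm:estideltagene} applied with $q=5$ gives $\mathbb{P}(\omega_n(U) < \delta_n) \to 0$ at a polynomial rate, and a one-dimensional small-ball bound based on Theorem \ref{theo.smallball} shows that $|U_n(an)|$ and $|U_n(bn)|$ also exceed $\delta_n$ with overwhelming probability. On the intersection $\Omega_n$ of these good events, the strengthened identity \eqref{Kac-strong} reads
\[
\mathcal{Z}_n([an,bn]) = \int_{an}^{bn} |U_n'(t)|\,\mathbf{1}_{|U_n(t)|<\delta_n}\, \frac{dt}{2\delta_n}.
\]
On $\Omega_n^c$, both sides are uniformly bounded by $2n$: the a priori bound on the zeros of a trigonometric polynomial of degree $n$ per period for the left-hand side, and the co-area identity $(2\delta_n)^{-1}\int_{-\delta_n}^{\delta_n}\#\{U_n=y\}\,dy \le 2n$ for the right-hand side. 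Taking expectations, the contribution of $\Omega_n^c$ is $2n\cdot\mathbb{P}(\Omega_n^c) = o(n)$, yielding
\[
\mathbb{E}[\mathcal{Z}_n([an,bn])] = \int_{an}^{bn} \mathbb{E}\bigl[|U_n'(t)|\,\mathbf{1}_{|U_n(t)|<\delta_n}\bigr]\,\frac{dt}{2\delta_n} + o(n).
\]

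\textbf{Step 2 (Edgeworth comparison with the Gaussian model).} For each fixed $t$, the vector $(U_n(t),U_n'(t))$ is the normalized sum $n^{-1/2}\sum_{i=1}^n X_{i,n}(t)$ of the centered, independent $\mathbb{R}^2$-valued vectors already introduced in the proof of Theorem \ref{thm:estideltagene}; the three hypotheses of Theorem \ref{battar2} (uniform ellipticity of the mean covariance, finite fifth moment, and weak mean Cramer condition, which for $s=5$ reduces to $b<1$) were verified there. Apply Theorem \ref{battar2} with the test function $f(x,y) := |y|\,\mathbf{1}_{|x|<\delta_n}/(2\delta_n)$: it is even in each argument, satisfies $M_1(f) = O(\delta_n^{-1})$, and has $\bar{\omega}_f(2\varepsilon:\Phi) = O(\varepsilon/\delta_n)$ (the oscillation of $f$ concentrates at the jump set $|x|=\delta_n$, whose $\varepsilon$-neighborhood carries Gaussian mass $O(\varepsilon)$, and within it $f$ fluctuates by at most $O(1/\delta_n)$ after integration in $y$). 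With $\varepsilon=n^{-3/2}$, both error terms of \eqref{edgeworthpasiid} are $o(1)$ uniformly in $t$, hence $o(n)$ after integrating over $[an,bn]$. Since the Edgeworth correction polynomials $P_{l,n}$ from \eqref{polyedge} associated to the Gaussian model $V_n$ vanish identically (higher cumulants are zero), comparing $U_n$- and $V_n$-expectations boils down to estimating the cumulant corrections for $U_n$: the odd-index ones ($l=1,3$) integrate to zero against the even function $f$ by parity of the corresponding Hermite factors, while the unique even-index one ($l=2$) is of order $n^{-1}$ pointwise in $t$, hence $O(1)=o(n)$ once integrated. We conclude that
\[
\int_{an}^{bn} \mathbb{E}\bigl[|U_n'(t)|\,\mathbf{1}_{|U_n(t)|<\delta_n}\bigr]\,\frac{dt}{2\delta_n} = \int_{an}^{bn} \mathbb{E}\bigl[|V_n'(t)|\,\mathbf{1}_{|V_n(t)|<\delta_n}\bigr]\,\frac{dt}{2\delta_n} + o(n).
\]

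\textbf{Step 3 (explicit Gaussian computation).} The vector $(V_n(t),V_n'(t))$ is centered Gaussian with diagonal covariance $\mathrm{diag}(1,\sigma_{1,n}^2)$, where $\sigma_{1,n}^2 := n^{-1}\sum_{k=1}^n (k/n)^2 \to 1/3$. By independence of its two coordinates, uniformly in $t$,
\[
\frac{\mathbb{E}[|V_n'(t)|\,\mathbf{1}_{|V_n(t)|<\delta_n}]}{2\delta_n} = \mathbb{E}[|V_n'(t)|]\cdot\frac{\mathbb{P}(|V_n(t)|<\delta_n)}{2\delta_n} = \sigma_{1,n}\sqrt{\tfrac{2}{\pi}}\cdot\tfrac{1}{\sqrt{2\pi}} + O(\delta_n^2) = \frac{\sigma_{1,n}}{\pi} + O(\delta_n^2).
\]
Integrating over $[an,bn]$ and chaining with the estimates of Steps 1 and 2 yields $\mathbb{E}[\mathcal{Z}_n([an,bn])] = (b-a)\,n\,\sigma_{1,n}/\pi + o(n)$, whence $\mathbb{E}[\mathcal{Z}_n]/n \to (b-a)/(\pi\sqrt{3})$.

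The bulk of the work, and the main obstacle, lies in Step 2: the scale of $\delta_n$ imposed by the small-ball estimate (it cannot exceed $n^{-6/5}$ with only $q=5$ moments) must be reconciled with the Edgeworth remainder (of order $n^{-3/2}$ for $s=5$), which leaves a narrow window $(-3/2,-6/5)$ of admissible exponents $\theta$. The parity cancellation of the odd-cumulant contributions is essential, and it is precisely the conjunction of the full fifth-moment hypothesis and the flexibility of the weak Cramer condition that makes this window non-empty; with only four moments the scheme would fail.
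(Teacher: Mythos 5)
Your proposal is correct and follows essentially the same route as the paper: the same exactness window $\delta_n=n^{\theta}$, $\theta\in(-3/2,-6/5)$, controlled by Theorem \ref{thm:estideltagene} (with the boundary values handled separately), the same Edgeworth comparison via Theorem \ref{battar2} applied to the test function $|y|\mathbf{1}_{\{|x|<\delta_n\}}$ together with the modulus-of-continuity bound of Lemma \ref{lineariteduomega}, and the same explicit Gaussian limit $\sigma_n/\pi\to 1/(\pi\sqrt{3})$, with your co-area bound for the Kac functional on the bad event being a clean variant of the paper's sign-change counting argument. The only remark worth tempering is your closing claim that the parity cancellation of the odd ($l=1,3$) Edgeworth corrections is essential: it is valid but unnecessary, since the crude uniform bound $O(n^{-1/2})$ on all correction terms (which is what the paper uses) already yields an $o(n)$ contribution after integration over $[an,bn]$.
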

The proof of the above result is divided in three steps, each of one is developped in the next subsections. The first one uses the estimate \eqref{estideltagene} to provide a stochastic ``exact'' version of Kac-Rice formula. Then Edgeworth expansion is used to compare the exact Kac formula for general entries to its analogue for Gaussian entries. Finally, we derive the asymptotics of the Kac functional in the Gaussian case.
Since we are assuming that the entries admit a finite moment of order 5, we shall set $q=s=5$ throughout the whole proof below, where $q$ and $s$ are the moment conditions in Theorems \ref{theo.smallball} and \ref{battar2} respectively.

\subsubsection{Towards an ``exact'' Kac-Rice formula}
In order to use Lemma \ref{Kac} and work with an ``exact' version of the Kac-Rice formula, we choose 
\begin{equation} \label{constante}
1+\frac{1}{5}  = 1+\frac{1}{q} <r< \frac{s-2}{2}= \frac{3}{2},
\end{equation}
and we set $E_n:=\left\{\min\left(\omega_n(U),|U_n(t)|,|U_n'(t)|\right)<\frac{1}{n^r}\right\}$. We can then write

\begin{eqnarray}\label{kacexact}
\nonumber \displaystyle{\frac{\mathcal{Z}_n([an,bn])}{n}} &=& \displaystyle{\frac{\mathcal{Z}_n([a n,b n])}{n} \mathds{1}_{E_n^c}+\frac{\mathcal{Z}_n([a n,b n])}{n}\mathds{1}_{E_n}}\\
\nonumber &\stackrel{\ref{Kac}}{=}&\displaystyle{\frac{n^{r-1}}{2}  \int_{a n}^{b n} |U_n'(t)| \mathds{1}_{\{|U_n(t)|< n^{-r}\}} dt \,\,\mathds{1}_{E_n^c} + \frac{\mathcal{Z}_n([a n,b n])}{n} \mathds{1}_{E_n}}\\
&=& \displaystyle{\frac{n^{r-1}}{2}    \int_{a n}^{b n} |U_n'(t)| \mathds{1}_{\{|U_n(t)|< n^{-r}\}} dt - B(n,r) + \frac{\mathcal{Z}_n([a n,b n])}{n} \mathds{1}_{E_n} }
\end{eqnarray}
where 
\[
B(n,r) :=\displaystyle{\frac{n^{r-1}}{2}   \int_{a n}^{b n} |U_n'(t)| \mathds{1}_{\{|U_n(t)|< n^{-r}\}} dt \,\,\mathds{1}_{E_n}}.
\]
Since $r > 1+1/q$, we know by the estimate (\ref{estideltagene}) that $\P\left(\omega_n(U)< n^{-r}\right)$ goes to zero as $n$ goes to infinity. Besides, using for instance the central limit Theorem, it also holds that both $\P\left(\left|U_n(an)\right|<n^{-r}\right)$ and $\P\left(\left|U_n(bn)\right|<n^{-r}\right)$ go to zero as $n$ goes to infinity. These three facts together imply that $\P(E_n)$ also goes to zero as $n$ goes to infinity. 
Now, if we set $X=e^{it}$, let us remark that the trigonometric polynomial $u_n(t)$ can actually be written as the product of $X^{-n}$ times an algebraic polynomial of degree $2n$ in the variable $X$. In particular, the latter has less that $2n$ zeros, from which we naturally deduce that $\mathcal{Z}_n([a n,b n]) \leq 2 n$ (deterministically). Hence, since $\P(E_n)$ goes to zero, we deduce that the last term of the right hand side of (\ref{kacexact}) converges to zero in $\mathbb L^1$ norm as $n$ goes to infinity. Let us now look at the remaining term $B(n,r)$. 
If one is able to prove similarly that for some absolute deterministic constant $C$ we have
\[
\sup_{\delta>0} \frac{1}{2\delta}\int_{an}^{bn} |U_n'(t)| \textbf{1}_{\{|U_n(t)|< \delta\}} dt < Cn,
\]
then we will obtain in the same way, by dominated convergence, that $B(n,r)$ converges to zero in $\mathbb L^1$ norm.
To do so, making the change of variables $u=\frac t n$ and paying attention to the fact that $U'_n (t) = \frac 1 n u'_n(\frac t n)$, we have for all $\delta>0$
\begin{eqnarray*}
\frac{1}{2\delta}\int_{an}^{bn} |U_n'(t)| \textbf{1}_{\{|U_n(t)|< \delta\}} dt&=&\frac{1}{2\delta}\int_{a}^{b} |u_n'(t)| \textbf{1}_{\{|u_n(t)|< \delta\}} dt.
\end{eqnarray*}
Next, as in the proof of Lemma \ref{Kac-counting}, we consider the connected components of the open set $\{|u_n(t)|<\delta\}$:
\[
\{|u_n(t)|<\delta\}=\bigcup_{i=1}^s ]s_i,t_i[,
\]
where $s < \sharp \{ t\,\,|\,\, |u_n(t)|=\delta\}$. We may thus write
\begin{eqnarray*}
\frac{1}{2\delta}\int_{a}^{b} |u_n'(t)| \textbf{1}_{\{|u_n(t)|< \delta\}} dt&=& \frac{1}{2\delta}\sum_{i=1}^s\int_{s_i}^{t_i} |u_n'(t)| \textbf{1}_{\{|u_n(t)|< \delta\}} dt\\
&\le &\sum_{i=1}^s \left(C(i)+1\right) \le s+\sum_{i=1}^s C(i)\le 4n+\sum_{i=1}^s C(i).
\end{eqnarray*}
where $C(i)$ denotes the number of change of signs of $u_n'$ on the interval $]s_i,t_i[$. Indeed, on each subinterval $]\alpha,\beta[$ of $]s_i,t_i[$  where $u_n'$ does not vanish, it holds that 
\[
\frac{1}{2\delta}\int_{\alpha}^{\beta} |u_n'(t)| \textbf{1}_{\{|u_n(t)|< \delta\}} dt\le \frac{|u_n(\alpha)|+|u_n(\beta)|}{2}\le 1.\]
As above, since $u_n(t)$ can be seen as $X^{-n}$ times an algebraic polynomial of degree $2n$ in the variable $X=e^{it}$, the total number of changes of sign of $u_n'$ is less then $2n$ and we get that for any $\delta>0$
\[
\frac{1}{2\delta}\int_{an}^{bn} |u_n'(t)| \textbf{1}_{\{|u_n(t)|< \delta\}} dt< 6 n.
\]
which is the announced claim. As a result, from Equation (\ref{kacexact}), we get that
\begin{equation}\label{exactKac}
\frac{\mathcal{Z}_n([an,bn])}{n}=\frac{n^{r-1}}{2}   \int_{an}^{bn} |U_n'(t)| \textbf{1}_{\{|U_n(t)|< n^{-r}\}} dt+o(1).
\end{equation}
where $o(1)$ denotes a remainder going to zero in the $\mathbb L^1$ topology as $n$ goes to infinity. The last equation is the desired ``exact'' Kac-Rice formula in our random setting. So the rest of the proof consists in showing that
\[
\lim_{n \to +\infty} \frac{n^{r-1}}{2}    \int_{an}^{bn} \E(|U_n'(t)| \mathds{1}_{\{|U_n(t)|< n^{-r}\}}) dt = \frac{b-a}{\pi\sqrt{3}}.
\]

\subsubsection{Edgeworth expansion of the Kac functional}
As in Section \ref{estidelta:sec}, we fix $t>0$ and we set
\[
X_{i,n}=\left(a_i \cos\left(\frac{it}{n}\right)+b_i \sin\left(\frac{it}{n}\right),-\frac{i}{n} a_i \sin\left(\frac{it}{n}\right)+\frac{i}{n} a_i \cos\left(\frac{it}{n}\right)\right).
\]
Thanks to the explicit form of the covariance matric (\ref{Cov}), the first assumption (1) of Theorem \ref{battar2} is fulfilled. Using the simple upper bound $\|X_{i,n}\|\le 4\left( |a_i|+ |b_i| \right)$, since $\mathbb{E}(|a_i|^q)<\infty$, we deduce the that assumption (2) is also satisfied. Based on the estimate (\ref{hypocramer}), we also know that assumption (3) is also valid. Since by assumption $0\leq b<1$, we have
\[
\frac 3 2-\frac{q}{2}+\frac 1 b =-1 +\frac 1 b  >0,
\]
and using the conclusion (\ref{edgeworthpasiid}) of Theorem \ref{battar2}, we may infer that if 
\[
f(x,y) := f_n(x,y):= |y|\mathds{1}_{|x|<n^{-r}},
\]
then we have
\begin{equation}\label{equaintermediaire}
\left|\int f dQ_n-\int f d \widetilde{Q}_n\right|\le M_{q}(f) \delta_1(n)+c(q,k) \bar{\omega}_f(2n^{-\frac{s-2}{2}}:\Phi) =O\left(  n^{-\frac{s-2}{2}}\right),
\end{equation}
where the last upper bound is obtained thanks to the following lemma.
\begin{lma}\label{lineariteduomega}
Set $g_{\delta}(x,y):=|y|\mathds{1}_{|x|<\delta}$.  There exists an absolute constant $C>0$, that does not depend on $\delta$, such that for any $\epsilon>0$ we have $\bar{\omega}_{g_{\delta}}(\epsilon:\Phi)\le C \epsilon.$
\end{lma}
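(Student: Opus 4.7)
The plan is to exploit the fact that $g_{\delta}$ is discontinuous only along the two vertical lines $\{x = \pm\delta\}$ and is otherwise identically equal either to $|y|$ (inside the slab) or to $0$ (outside it). Accordingly, I would split $\mathbb R^2$ into three regions depending on the position of $x$ relative to $\pm\delta$, and control the local oscillation $\omega_{g_{\delta}}((x,y); \epsilon/2)$ pointwise in each of them.

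More precisely, when $|x| < \delta - \epsilon/2$, the Euclidean ball $B((x,y), \epsilon/2)$ lies entirely inside $\{|x'| < \delta\}$, so $g_{\delta}(x',y') = |y'|$ on the whole ball and the oscillation is at most $\epsilon$ by the Lipschitz property of $y' \mapsto |y'|$. When $|x| > \delta + \epsilon/2$, the ball is entirely outside the slab and $g_{\delta} \equiv 0$ on it, producing a vanishing oscillation. The only delicate region is the transition band $\delta - \epsilon/2 \leq |x| \leq \delta + \epsilon/2$, on which I would only use the trivial bound $\omega_{g_{\delta}}((x,y); \epsilon/2) \leq |y| + \epsilon/2$. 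Integrating these pointwise bounds against the standard Gaussian measure $d\Phi(x,y) = \rho(x)\rho(y)\,dx\,dy$, the first region contributes at most $\epsilon$, the third contributes $0$, and the transition band has $x$-Lebesgue measure at most $2\epsilon$ with Gaussian density bounded by $(2\pi)^{-1/2}$, so its contribution is at most $\frac{2\epsilon}{\sqrt{2\pi}}\bigl(\mathbb E|Y| + \epsilon/2\bigr)$. Summing yields an estimate of the form $\epsilon + C'(\epsilon + \epsilon^2)$, which is $\leq C\epsilon$ for bounded $\epsilon$ (as is the case for $\epsilon = 2n^{-(s-2)/2}$ in the application).

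The essential observation, and the reason why $C$ does not depend on $\delta$, is that the geometric analysis only involves the width $\epsilon$ of the transition band and the uniform $(2\pi)^{-1/2}$ bound on the Gaussian density, with no reference to the location $\pm\delta$ of the discontinuities. There is no real obstacle here: the proof reduces to a direct geometric computation, and the uniformity in $\delta$ is precisely what allows the Edgeworth expansion of Theorem \ref{battar2} to be applied to the shrinking-window indicator $\mathds{1}_{|x| < n^{-r}}$ appearing in the exact Kac--Rice formula \eqref{exactKac}.
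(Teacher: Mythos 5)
Your proof is correct and follows essentially the same route as the paper's: off a transition band of width $O(\epsilon)$ around the lines $x=\pm\delta$ the oscillation is controlled by the Lipschitz bound in $y$ (or vanishes), on the band one uses the trivial bound of order $|y|+\epsilon$, and the Gaussian measure of the band is $O(\epsilon)$ uniformly in $\delta$. The paper's own argument differs only in inessential details (sup-norm balls, bands of width $4\epsilon$, and a combined treatment of the two off-band regions), and it likewise absorbs the resulting $\epsilon^2$ term into $C\epsilon$ for bounded $\epsilon$, so your explicit restriction to bounded $\epsilon$ is harmless for the application.
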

\begin{proof}
Let us first recall that, for any $\epsilon>0$, if $B(x,\alpha)$ denotes the ball of radius $\alpha$ centred in $x$ for the $\|\cdot\|_\infty$ norm of $\R^2$, we have
by definition 
\[
\bar{\omega}_{g_{\delta}}(\epsilon:\Phi)=\int_{\R^2} \left(\max_{B(x,\epsilon)}g_{\delta}-\min_{B(x,\epsilon)}g_{\delta}\right)d\Phi(x).
\]
Let us consider the two sets 
\[
\begin{array}{ll}
\Delta_1 &:=\left\{x=(x_1,x_2) \in \R^2\,\Big{|}\,|x_1+\delta|<4\epsilon\right\} ,\\
\Delta_2 & :=\left\{x=(x_1,x_2) \in \R^2\,\Big{|}\,|x_1-\delta|<4\epsilon\right\}.
\end{array}
\]
Assume that $x \notin \Delta_1 \cup \Delta_2$, then for any $(a,b,a',b')\in B(x,\epsilon)\times B(x,\epsilon)$ it holds that
\begin{eqnarray}
\nonumber g_{\delta}(a,b)-g_{\delta}(a',b') &=& |b|\mathds{1}_{|a|<\delta}-|b'|\mathds{1}_{|a'|<\delta} \\
\nonumber &\le& |b-b'| \mathds{1}_{|a|<\delta}+|b'|\left(\mathds{1}_{|a|<\delta}-\mathds{1}_{|a'|<\delta}\right)\\
&=&|b-b'| \mathds{1}_{|a|<\delta}\le 2 \epsilon. \label{gdelta1}
\end{eqnarray}
Indeed, if $x \notin \Delta_1 \cup \Delta_2$, we both have $|x_1+\delta|\geq 4\epsilon$, $|x_1-\delta| \geq 4\epsilon$. Since $|a-x_1|<\epsilon$, $|a'-x_1|<\epsilon$ we have necessarily $|a+\delta|>3\epsilon$, $|a-\delta|>3\epsilon$, $|a'+\delta|>3\epsilon$ and $|a-\delta|>3\epsilon$. Besides, we have $|a-a'|<2\epsilon$ so that necessarily $a \in (-\delta,\delta)$ if and only if $a'\in (-\delta,\delta)$ 
 and the above difference of indicator functions vanishes. Indeed, in the opposite case, one would be in the situation where either $\delta \in [a,a'] \, (\text{or} \, [a',a])$ or $-\delta \in [a,a'] \, (\text{or} \, [a',a])$ which would contradict the aforementioned inequalities. 
On the other hand, if $x \in \Delta_1 \cup \Delta_2$ one has the bound 
\begin{equation}\label{gdelta2}
\max_{B(x,\epsilon)}g_{\delta}-\min_{B(x,\epsilon)}g_{\delta}\le 2 (|x_2|+\epsilon).
\end{equation}
Combining \eqref{gdelta1} an \eqref{gdelta2} leads to
\begin{eqnarray*}
\bar{\omega}_f(\epsilon:\Phi)&=&\int_{\R^2} \left(\max_{B(x,\epsilon)}f-\min_{B(x,\epsilon)}f\right)d\Phi(x)\\
&\le& 2\epsilon+ \int_{\Delta_1 \cup \Delta_2} \left(\max_{B(x,\epsilon)}f-\min_{B(x,\epsilon)}f\right)d\Phi(x)\\
&\le&2\epsilon + 2 \int_{\Delta_1 \cup \Delta_2}  (|x_2|+\epsilon) d\Phi(x_1,x_2)\\
&\le& 2\epsilon+2 \left(\int_\R (|x_2|+\epsilon) e^{-\frac{x_2^2}{2}}\frac{dx_2}{\sqrt{2\pi}}\right)\times \left(\int_{[-\delta-4\epsilon,-\delta+4\epsilon] \cup [\delta-4\epsilon,\delta+4\epsilon]} e^{-\frac{x_1^2}{2}}\frac{dx_1}{\sqrt{2\pi}}\right)\\
&\le& 2\epsilon+ 2 ( \sqrt{\frac 2 \pi}+\epsilon) \times \frac{16 \epsilon}{\sqrt{2\pi}} \leq  C\epsilon.
\end{eqnarray*}
\end{proof}

Note that in Equation \eqref{equaintermediaire}, both measure $Q_n$ and $\widetilde{Q}_n$ implicitely depend on $t$ through the moments of $X_{i,n}=X_{i,n}(t)$ of order larger than $3$. However, since all the forthcoming bounds will be uniform in $t$ and in order to lighten the notations we won't write this dependence explicitly. Plugging the estimate \eqref{equaintermediaire} in Equation (\ref{exactKac}) we obtain 

\begin{eqnarray*}
\E\left(\frac{\mathcal{Z}_n([an,bn])}{n}\right)&=& \frac{n^{r-1}}{2}   \int_{an}^{bn} \E\left(|U_n'(t)| \textbf{1}_{\{|U_n(t)|< n^{-r}\}} \right)dt  +o(1)\\
&=& \frac{n^{r-1}}{2}    \int_{an}^{bn}\left(\int f dQ_n\right) dt +o(1)\\
&=& \frac{n^{r-1}}{2}    \int_{an}^{bn}\left(\int f d\widetilde{Q}_n\right) dt+O\left(n^{r-\frac{s-2}{2}}\right) +o(1).
\end{eqnarray*}
From Equation \eqref{constante}, one then deduce that
\begin{equation}\label{presquefini}
\E\left(\frac{\mathcal{Z}_n([an,bn])}{n}\right) = \frac{n^{r-1}}{2}    \int_{an}^{bn}\left(\int f d\widetilde{Q}_n\right) dt +o(1).
\end{equation}

\subsubsection{Limit of the Gaussian Kac functional}
We are now left to derive the asymptotics of the Gaussian Kac functional on the right hand side of \eqref{presquefini} as $n$ goes to infinity.
Relying on Equation (\ref{polyedge}), the measure $\widetilde{Q}_n$ obtained via the Edgeworth expansion admits the following density with respect to the Lebesgue measure on $\mathbb R^2$ 
\[
\widetilde{Q}_n(x,y)=\left(1+\sum_{l=1}^{s-2} n^{-\frac l 2} P_{l,n} (x,y)\right)\rho_n(x,y),
\]
where $P_{l,n}$ are some polynomials with bounded degree and whose coefficients are uniformly bounded in $n$ and $t$, and where $\rho_n$ is the density of the centered Gaussian vector with covariance given by (\ref{Cov}). Indeed, the coefficients of the polynomials arising in the Edgeworth expansion only depend of the cumulants of $(U_n,U_n')$ which are uniformly bounded in $t$ and $n$. Thus, we may write for $\sigma_n^2=\frac{1}{n}\sum_{k=1}^n \frac{k^2}{n^2}$:
\begin{eqnarray}
\nonumber \frac{n^{r}}{2}   \int f d\widetilde{Q}_n  & =&  \frac{n^{r}}{2}  \int f(x,y) \widetilde{Q}_n(x,y)dxdy  \\
\nonumber &=& \frac{n^{r}}{2}  \int_{-n^{-r}}^{n^{-r}}\left(\int_{\R}|y|\rho_n(x,y)\left(1+\sum_{l=1}^{s-2} n^{-\frac l 2} P_{l,n} (x,y)\right)dy\right)dx\\
&=&  \frac{n^{r}}{2}   \int_{-n^{-r}}^{n^{-r}}\left(\int_{\R}|y|\rho_n(x,y)dy\right)dx +R_n \label{tildetilde}
\end{eqnarray}
where 
\[
R_n := \frac{n^{r}}{2}   \int_{-n^{-r}}^{n^{-r}}\left(\int_{\R}|y|\rho_n(x,y)\left(\sum_{l=1}^{s-2} n^{-\frac l 2} P_{l,n} (x,y)\right)dy\right)dx
\]

Since the polynomials $P_{l,n}$ are uniformly bounded in $n$ and $t$, we may find $M>0$ and a positive integer $m$ such that for all $x,y\in \R^2$
\begin{equation}\label{bornepoly}
\max_{l\le s-2, n \ge 1} \left|P_{l,n}(x,y)\right| \le M \left(1+\sum_{i=0}^m |x|^i |y|^{m-i}\right).
\end{equation}
Hence, for a positive constant $C$ which does not depend on $n$, we have
\begin{eqnarray*}
| R_n | & \leq &   \frac{(s-2)M}{\sqrt{n}} \times \frac{n^{r}}{2} \int_{-n^{-r}}^{n^{-r}} |y|\rho_n(x,y) \left(1+\sum_{i=0}^m |x|^i |y|^{m-i}\right)dx dy \\
&\le & \frac{(s-2)M}{\sqrt{n}} \times \frac{n^{r}}{2} \int_{-n^{-r}}^{n^{-r}}e^{-\frac{x^2}{2}} \left( \int_{\R}|y|\frac{e^{-\frac{y^2}{\sigma_n^2}}}{\sqrt{2\pi\sigma_n^2}} \left(1+\sum_{i=0}^m |x|^i |y|^{m-i}\right)dy \right) dx\\
&\le&   \frac{(s-2)M C}{\sqrt{n}} \times \frac{n^{r}}{2} \int_{-n^{-r}}^{n^{-r}}e^{-\frac{x^2}{2}} (1+|x|^m) dx\\
&\le&   \frac{(s-2)M C}{\sqrt{n}}  \left( 1+o(1)\right).
\end{eqnarray*}
On the other hand, we have
\[
\begin{array}{ll}
\displaystyle{\frac{n^{r}}{2} \int_{-n^{-r}}^{n^{-r}}\left(\int_{\R}|y|\rho_n(x,y)dy\right)dx} &=\displaystyle{ \left( \int_{\R}|y|\frac{e^{-\frac{y^2}{2\sigma_n^2}}}{\sqrt{2\pi\sigma_n^2}} dy \right)\times \left( \frac{n^{r}}{2} \int_{-n^{-r}}^{n^{-r}}\frac{e^{-\frac{x^2}{2}}}{\sqrt{2\pi}}dx\right) } \\
\\
& =\displaystyle{\frac{\sigma_n}{\pi} \times \left( \frac{n^{r}}{2} \int_{-n^{-r}}^{n^{-r}}e^{-\frac{x^2}{2}}dx\right)} \\
\\
& \displaystyle{= \frac{\sigma_n}{\pi}\left( 1+o(1)\right) =\frac{1}{\pi \sqrt{3} } \left( 1+o(1)\right)}.
\end{array}
\]
Combining the two last estimates in Equation \eqref{tildetilde}, we get that uniformly in $t$, as $n$ goes to infinity
\begin{equation}\label{eq.fini}
\lim_{n \to +\infty}  \frac{n^{r}}{2}   \int f d\widetilde{Q}_n  =\frac{1}{\pi \sqrt{3} } .
\end{equation}
Finally, injecting \eqref{eq.fini} in Equation \eqref{presquefini}, we obtain
\begin{eqnarray*}
\lim_{n \to +\infty} \E\left[ \frac{\mathcal{Z}_n([an,bn])}{n}\right] = \frac{b-a}{\pi \sqrt{3}},
\end{eqnarray*}
which concludes the proof of Theorem \ref{theo.main}. 
\begin{rmk}
A careful examination of the previous proofs reveals that the sine and cosine functions involved in the function $U_n$ are only used through very general properties of smoothness and boundedness. That means that the weak Cramer condition satisfied by the random coefficients is strong enough to ensure a universality phenomenon regardless of the particular nature of the function under consideration, provided that some mere assumptions of smoothness and boundedness are fulfilled. A natural extension might be, for a given function $f\in \mathcal{C}^2_b(\R,\R)$, to set
\begin{eqnarray*}
F_n(t)=\sum_{k=1}^n a_k f( k t), \quad 
G_n(t)=\sum_{k=1}^n A_k f( k t )
\end{eqnarray*}
and to compare the asymptotic behaviours of $\mathcal{Z}_n^F([a,b])$ and $\mathcal{Z}_n^G([a,b])$, i.e. the respective number of real roots of $F_n$ and $G_n$ on some fixed interval $[a,b]$. For example, in the case where $f=\cos$, our method applies almost verbatim yielding the desired universality result.
\end{rmk}


\section{Proofs}

\subsection{Small ball estimates}\label{sec.proofsmallball}
Let us first give the proof of Theorem \ref{theo.smallball} on the small ball estimate for sum of independent random vectors. 

\begin{proof}[Proof of Theorem \ref{theo.smallball} ]
Let us fix $\delta>0$, thanks to the Markov inequality, for all $t >0$ we have 
\[
\begin{array}{ll}
\displaystyle{\mathbb P \left(  \frac{1}{\sqrt{n}} \left|\left| \sum_{i=1}^n X_{i,n} \right|\right| \leq \delta \right)} & = \displaystyle{\mathbb P \left(  \exp \left( - \frac{t^2}{2 n} || \sum_{i=1}^n X_{i,n} ||^2 \right) \geq \exp\left( - \frac{t^2 \delta^2}{2} \right) \right)} \\
\\
& \leq \displaystyle{e^{  \frac{t^2 \delta^2}{2} } \mathbb E \left[   \exp \left( - \frac{t^2}{2n} || \sum_{i=1}^n X_{i,n} ||^2 \right) \right] }.
\end{array}
\]
Now, the density of the standard gaussian variable being a fixed point for the Fourier transform in $\mathbb R^d$, for all $y \in \mathbb R^d$, we can write 
\[
\exp \left(  - \frac{||y||_2^2}{2} \right)  = C_d \int_{\mathbb R^d} e^{- i s\cdot y } e^{-\frac{||s||_2^2}{2}} ds, 
\]
where $C_d := (2 \pi)^{-d/2}$ and thus, letting $y= \frac{t}{\sqrt{n}}  \sum_{i=1}^n X_{i,n} $ in the above inequality, we get
\[
\begin{array}{ll}
\displaystyle{\mathbb P \left(   \frac{1}{\sqrt{n}} \left|\left| \sum_{i=1}^n X_{i,n} \right|\right| \leq \delta \right)} & \leq \displaystyle{ C_d \, e^{  \frac{t^2 \delta^2}{2}} \mathbb E \left[  \int_{\mathbb R^d} e^{- i s\cdot \frac{t}{\sqrt{n}}  \sum_{i=1}^n X_{i,n}}  e^{-\frac{||s||_2^2}{2}} ds  \right] } \\
\\
& = \displaystyle{ C_d \, e^{  \frac{t^2 \delta^2}{2}}   \int_{\mathbb R^d}  \mathbb E \left[ e^{- i s\cdot \frac{ t}{\sqrt{n}}  \sum_{i=1}^n X_{i,n}}\right]  e^{-\frac{||s||_2^2}{2}} ds   } \\
\\
& = \displaystyle{C_d  \, e^{ \frac{ t^2 \delta^2}{2}}  \int_{\mathbb R^d}  \prod_{i=1}^n \phi_{X_{i,n}} \left(  \frac{ t s}{\sqrt{n}} \right)  e^{-\frac{||s||_2^2}{2}} ds   } \\
\\
& = \displaystyle{ C_d \, e^{ \frac{ t^2 \delta^2}{2}} \left( \frac{n}{t^2} \right)^{d/2} \int_{\mathbb R^d}  \prod_{i=1}^n \phi_{X_{i,n}} \left(u \right)  e^{-\frac{ n ||u||_2^2}{2 t^2}} du   } \\
\\
& \leq \displaystyle{C_d \, e^{ \frac{ t^2 \delta^2}{2}} \left( \frac{n}{t^2} \right)^{d/2} \int_{\mathbb R^d}  \prod_{i=1}^n |\phi_{X_{i,n}}(u)|  e^{-\frac{ n ||u||_2^2}{2 t^2}} du} 
\end{array}
\]
Note that from the arithmetico-geometric inequality, we always have 
\[
\begin{array}{ll}
\displaystyle{\prod_{i=1}^n \left| \phi_{X_{i,n}}(u) \right| } & = \displaystyle{ \exp \left(  n \times \frac{1}{n} \sum_{i=1}^n \log \left( \left| \phi_{X_{i,n}}(u) \right|\right)  \right) } \\
\\
& \leq \displaystyle{\exp \left(  n \times \log \left( \frac{1}{n} \sum_{i=1}^n  \left| \phi_{X_{i,n}}(u) \right|\right)  \right) }.
\end{array}
\]
Thus, if we introduce the following notation to simplify the expressions : 
\[
\Phi_n(u):= \frac{1}{n} \sum_{i=1}^n  \left| \phi_{X_{i,n}}(u) \right|,
\]
we have 
\begin{equation}\label{eqn.major}
\begin{array}{ll}
\displaystyle{\mathbb P \left(   \frac{1}{\sqrt{n}} \left|\left| \sum_{i=1}^n X_{i,n} \right|\right| \leq \delta \right) } & \leq \displaystyle{C_d\,  e^{ \frac{ t^2 \delta^2}{2}} \left( \frac{n}{t^2} \right)^{d/2} \int_{\mathbb R^d}  e^{ n \log \left(  \Phi_n(u) \right) } e^{-\frac{ n ||u||_2^2}{2 t^2}} du}.\\
\\
& = \displaystyle{I_1 + I_2 + I_3}
\end{array}
\end{equation}
where the last sum corresponds to the decomposition the last integral over the whole $\mathbb R^d$ into three parts: the integral for $||u||_2 \leq r$ for a small $r>0$ to be fixed later,  the integral for $||u||_2 \geq R$ for another constant $R>0$ to be precised, and finally the in between integral for $r < ||u||_2  < R$, i.e.
\[
\begin{array}{ll}
I_1:= \displaystyle{C_d\,  e^{ \frac{ t^2 \delta^2}{2}} \left( \frac{n}{t^2} \right)^{d/2} \int_{||u||_2 \leq r}  e^{ n \log \left(  \Phi_n(u) \right) } e^{-\frac{ n ||u||_2^2}{2 t^2}} du} \\
\\
I_2:= \displaystyle{C_d\,  e^{ \frac{ t^2 \delta^2}{2}} \left( \frac{n}{t^2} \right)^{d/2} \int_{||u||_2 \geq R}  e^{ n \log \left(  \Phi_n(u) \right) } e^{-\frac{ n ||u||_2^2}{2 t^2}} du} \\
\\
I_3:= \displaystyle{C_d\,  e^{ \frac{ t^2 \delta^2}{2}} \left( \frac{n}{t^2} \right)^{d/2} \int_{r<||u||_2< R}  e^{ n \log \left(  \Phi_n(u) \right) } e^{-\frac{ n ||u||_2^2}{2 t^2}} du} 
\end{array}
\]
Let us first consider the integral $I_1$ in a neighborhood of zero. 
For a fixed $1 \leq i \leq n$, the Taylor expansion of the characteristic function of $X_{i,n}$ for small $||u||_2$ gives 
\[
\phi_{X_{i,n}}(u) = 1 - u^* \hbox{cov}(X_{i,n}) u + O(||u||_2^3), 
\]
and thus 
\[
|\phi_{X_{i,n}}(u) |= 1 - u^* \hbox{cov}(X_{i,n}) u + O(||u||_2^3).
\]
Now from the hypothesis on the mean of the third moment of $X_{i,n}$, we deduce that for $n$ large enough
\[
\Phi_n(u)= 1 - \frac{1}{n} \sum_{i=1}^n u^* \hbox{cov}(X_{i,n}) u + O(||u||_2^3),
\]
where the $O(||u||_2^3)$ is uniform in $n$. Using the lower bound on the covariance, we thus get that for $n$ large enough
\[
\Phi_n(u) \leq 1 - c \, u^* u + O(||u||_2^3).
\]
In particular, for $n$ large enough, there exists a small $r>0$ such that, for all $||u||_2 \leq r$, we have 
\[
\Phi_n(u) \leq 1 - \frac{c}{2} \, u^* u,
\]
and taking the logarithm, since $\log(1-x) \leq -x/2$ for $x>0$ small enough, we get that for $n$ large enough, for $r>0$ small enough and for all $||u||_2 \leq r$:
\[
\log \left( \Phi_n(u)\right)  \leq - \frac{c}{4} \, u^* u.
\]
Injecting this estimate in the integral for $||u||_2 \leq r$ gives that for $n$ large enough 
\[
\begin{array}{ll}
\displaystyle{\int_{||u||_2 \leq r}  e^{ n \log \left(  \Phi_n(u) \right) } e^{-\frac{ n ||u||_2^2}{2 t^2}} du} &  \leq \displaystyle{\int_{||u||_2 \leq r}   e^{- \frac{n}{2} \left( \frac{c}{2} + \frac{1}{ t^2}\right) ||u||_2^2} du } \\
\\
&  \leq  \displaystyle{ n^{-d/2} \left( \frac{c}{2} + \frac{1}{ t^2}\right)^{-d/2} \int_{\mathbb R^d}   e^{- \frac{1}{2}  ||u||_2^2} du } \\
\\
&  \leq  \displaystyle{ C_d^{-1 } n^{-d/2} \left( \frac{c}{2} + \frac{1}{ t^2}\right)^{-d/2}. }
\end{array}
\]
In particular, we have for $n$ large enough
\begin{equation}\label{eqn.I1}
I_1 \leq 
 \frac{e^{ \frac{ t^2 \delta^2}{2}}}{\left( \frac{c t^2}{2} +1 \right)^{d/2}}.
\end{equation}
We now focus on the integral $I_2$ in the neighborhood of infinity. Since the variables $X_{i,n}$ satisfy the weak Cramer condition, there exists constants $A>0$ and $R>0$  such that, for $n$ large enough and for $||u||_2> R$, we have 
\[
\Phi_n(u) \leq 1- \frac{A}{||u||_2^{b}}.
\]
Taking the logarithm as above, one deduce that for $n$ and $R$ large enough and for all  $||u||_2> R$
\[
\log( \Phi_n(u) ) \leq - \frac{A}{2 ||u||_2^{b}}.
\]
Injecting this new estimate in the integral for $||u||_2 \geq R$ gives 
\begin{equation}\label{eqn.I20}
\begin{array}{ll}
\displaystyle{\int_{||u||_2 \geq R}  e^{ n \log \left(  \Phi_n(u) \right) } e^{-\frac{ n ||u||_2^2}{2 t^2}} du} &  \leq \displaystyle{\int_{||u||_2 \geq R}   
\exp \left( -\frac{n}{2} \left(    \frac{A}{ ||u||_2^{b}} + \frac{ ||u||_2^2}{t^2} \right)\right)  du } \\
\\
&  =   \displaystyle{ V_d \int_{R}^{+\infty}    \exp \left( -\frac{n}{2} \left(    \frac{A}{ s^{b}} + \frac{ s^2}{t^2} \right) \right)s^{d-1}ds }.
\end{array}
\end{equation}
where $V_d$ is the volume of the unit sphere in dimension $d$.  Now, for $0<a<1/b$, a simple change of variable yields 
\begin{equation}\label{eqn.I21}
\begin{array}{ll}
\displaystyle{\int_{R}^{n^{a}}    \exp \left( -\frac{n}{2} \left(    \frac{A}{ s^{b}} + \frac{ s^2}{t^2} \right) \right)s^{d-1}ds} & \leq \displaystyle{  e^{ -\frac{A}{2} n^{1-a b}}\int_{R}^{n^{a}}    \exp \left( -\frac{n s^2}{2t^2} \right)s^{d-1}ds  }\\
\\
& \leq \displaystyle{e^{ -\frac{A}{2} n^{1-a b}} \, \left( \frac{n}{t^2} \right)^{-d/2}   \,  \int_0^{\infty} e^{-u^2/2} u^{d-1}du }.
\end{array}
\end{equation}
Otherwise, we have
\begin{equation}\label{eqn.I22}
\begin{array}{ll}
\displaystyle{\int_{n^{a}}^{+\infty}    \exp \left( -\frac{n}{2} \left(    \frac{A}{ s^{b}} + \frac{ s^2}{t^2} \right) \right)s^{d-1}ds} & \displaystyle{ \leq \int_{n^{a}}^{+\infty}    \exp \left( - \frac{ n s^2}{2 t^2}  \right)s^{d-1}ds} \\
\\
& \displaystyle{ = \left( \frac{n}{t^2} \right)^{-d/2}  \int_{n^{a +1/2}/t}^{+\infty}    \exp \left( - \frac{ u^2}{2 }  \right)u^{d-1}ds} \\
\\
& \displaystyle{ \leq  \left( \frac{n}{t^2} \right)^{-d/2}  h \left( \frac{n^{a +1/2}}{t} \right)}
\end{array}
\end{equation}
where $h(x) :=  2 x^{d-2} e^{-x^2/2}$, as soon as $n^{a +1/2}/t $ is large enough. 
Combining Equations \eqref{eqn.I20}, \eqref{eqn.I21} and \eqref{eqn.I22}, we get that there exists a constant $\Gamma_d$ which only depends on the dimension $d$, such that
\begin{equation}\label{eqn.I2}
I_2 \leq \Gamma_d \, e^{ \frac{ t^2 \delta^2}{2}} \left(  e^{ -\frac{A}{2} n^{1-a b}}+ h \left( \frac{n^{a +1/2}}{t} \right)  \right) .
\end{equation}
We are left with the in between integral $I_3$, where the integral bounds $r$ and $R$ are now fixed. Since the sequence $X_{i,n}$ satisfy the weak Cramer condition, from Proposition \ref{pro.cramerlocal}, there exists $\varepsilon>0$ such that, for $n$ large enough, uniformly in $r< ||u||_2< R$ we have $\Phi_n(u) \leq 1-\varepsilon$. We can moreover choose $\varepsilon$ small enough so that we also have  $\log(\Phi_n(u)) \leq -\varepsilon/2$. Injecting this last estimate in the integral between $r$ and $R$ yields:  
\[
\begin{array}{ll}
\displaystyle{\int_{r<||u||_2 < R}  e^{ n \log \left(  \Phi_n(u) \right) } e^{-\frac{ n ||u||_2^2}{2 t^2}} du} &  \leq \displaystyle{  \left( e^{-\varepsilon/2} \right)^n \int_{r< ||u||_2 \leq R}    e^{-\frac{ n ||u||_2^2}{2 t^2}} du } \\
\\
&  \leq  \displaystyle{ \left( e^{-\varepsilon/2} \right)^n \left( \frac{n}{ t^2}\right)^{-d/2} \int_{\mathbb R^d}   e^{- \frac{1}{2}  ||u||_2^2} du } \\
\\
&  \leq  \displaystyle{ C_d^{-1 }  \left( e^{-\varepsilon/2} \right)^n  \left( \frac{n}{ t^2}\right)^{-d/2}. }
\end{array}
\]
In particular, we get
\begin{equation}\label{eqn.I3}
I_3 \leq 
e^{ \frac{ t^2 \delta^2}{2}} \left( e^{-\varepsilon/2} \right)^n.
\end{equation}
Eventually, combining Equations \eqref{eqn.major},\eqref{eqn.I1},\eqref{eqn.I2} and \eqref{eqn.I3}, we get that for all $\delta>0$ and $t>0$, and for $n$ large enough
\[
\begin{array}{ll}
\displaystyle{\mathbb P \left(   \frac{1}{\sqrt{n}} \left|\left| \sum_{i=1}^n X_{i,n} \right|\right| \leq \delta \right) } & \leq  \displaystyle{\frac{e^{ \frac{ t^2 \delta^2}{2}}}{\left( \frac{c t^2}{2} +1 \right)^{d/2}} + \Gamma_d \, e^{ \frac{ t^2 \delta^2}{2}} \left(  e^{ -\frac{A}{2} n^{1-a b}}+ h \left( \frac{n^{a +1/2}}{t} \right)  \right)} \\
\\
& \;\; \displaystyle{+ \; e^{ \frac{ t^2 \delta^2}{2}} \left( e^{-\varepsilon/2} \right)^n}.
\end{array}
\]
Letting $t=1/\delta$, we conclude that there exists a positive constant $\Gamma$ which does not depend on $n$ or $\delta$ such that for $n$ large enough 
\[
\mathbb P \left(   \frac{1}{\sqrt{n}} \left|\left| \sum_{i=1}^n X_{i,n}  \right|\right| \leq \delta \right) \leq  \Gamma \,  \left( \delta^d +e^{ -\frac{A}{2} n^{1-a b}}+ h \left( \delta n^{a +1/2} \right)+  \left( e^{-\varepsilon/2} \right)^n\right).
\]
In particular, if $\delta$ is of the form $\delta=n^{-\gamma}$ for some $0<\gamma< a + \frac{1}{2} < \frac{1}{b} +\frac{1}{2}$, making the constant $\Gamma$ a bit larger, we get that
for $n$ large enough
\[
\mathbb P \left(   \frac{1}{\sqrt{n}} \left|\left| \sum_{i=1}^n X_{i,n}  \right|\right| \leq \frac{1}{n^{\gamma}} \right) \leq    \frac{\Gamma}{n^{d \gamma}},
\]
hence the result.
\end{proof}

\subsection{Edgeworth expansion}

\label{sec.proofedgeworth}
We now give the proof of Theorem \ref{battar2} stated in Section \ref{sec.edgeworth} which asserts that there is a valid Edgeworth expansion for the normalized sum of independent random vectors satisfying the weak mean Cramer condition \eqref{eq.WMCramer}.

\begin{proof}[Proof of Theorem \ref{battar2}]
As already noticed in Remark \ref{rem.controlI1}, the classical Cramer condition is used only once in the original proof of Theorems 20.1 and 20.6 of \cite{bhatta}, from which we adopt the notations, namely for the control of the integral $I_1$ in Equation (20.36) p. 211. As our hypotheses in Theorem \ref{battar2} only differ from the ones of Battharcharia and Rao by the fact that the classical Cramer condition is replaced by the weak mean Cramer condition, we are left to check in details that an analoguous control of $I_1$ can be actually achieved under the weakened Cramer condition.
Roughly speaking, the global strategy of the original proof is to truncate and center the original variables $X_{i,n}$ appearing in the statement and show that if the normalized sum of the truncaded and centered variables satisifies a valid Edgeworth expansion, then so does the normalized sum of the original variables. Thus, starting from the variables $(X_{i,n})_{1 \leq i \leq n}$, we introduce the new variables
\[
Z_{i,n}:=X_{i,n} \mathds{1}_{||X_{i,n}|| \leq n} -\mathbb E[X_{i,n} \mathds{1}_{||X_{i,n}|| \leq n}]
\]
with components in $\mathbb R^k$
\[
Z_{i,n} =: \left( Z_{i,n}(1), \ldots,  Z_{i,n}(k) \right).
\]
Note that we have $|Z_{i,n} | \leq 2 \sqrt{n}$ for all $1 \leq i \leq n$. We denote by $Q_n'$ the law of the normalized sum
\[
Q_n' \stackrel{law}{:=} \frac{1}{\sqrt{n}} \left(Z_{1,n} + \ldots + Z_{n,n } \right),
\]
and by $\widehat{Q}_n'$ its characteristic function. The integral term $I_1$ which is the object of our attention involves multi-index derivatives of $\widehat{Q}_n'$ so let us specify our notations. For a given multi-index $\gamma=(\gamma^1, \ldots, \gamma^k) \in \mathbb N^k$, we denote by $|\gamma|$ its length, namely 
$|\gamma| := \sum_{i=1}^k \gamma^i$. If we are now given a family a $n$ multi-indexes
$\gamma=(\gamma_1, \ldots, \gamma_n)$ with $\gamma_i=(\gamma_i^j)_{1 \leq j \leq k} \in \mathbb N^k$, we set  $|\gamma|:=\sum_{i=1}^n |\gamma_i| = \sum_{i,j} \gamma_i^j$. If $\alpha \in \mathbb N^k$ and $\beta \in \mathbb N^k$ are multi-indexes such that $\alpha_i \leq \beta_i$ for all $1\leq i \leq k$, the associated multinomial coefficient is denoted by  
\[
{\beta-\alpha \choose  \gamma}:= \frac{|\beta-\alpha|!}{\prod_{i=1}^n |\gamma_i| !}.
\]
The proof of  Theorems 20.1 and 20.6 in \cite{bhatta} also involves a smoothing Kernel $K_{\varepsilon}$, with Fourier transform $\widehat{K}_{\varepsilon}$, whose derivatives satisfy the a priori estimate (20.18) p.210, namely for all $\varepsilon>0$, for all $t \in \mathbb R^k$, and for all mutli-index $\alpha$ of length $|\alpha|\leq s+d+1$ 
\begin{equation}\label{eq.estiK}
\left|D^{\alpha} \widehat{K}_{\varepsilon} \right| \leq \varepsilon^{|\alpha|}c_3(s,k) e^{-(\varepsilon ||t||_2)^{1/2}}, 
\end{equation}
for some absolute constant $c_3(s,k)>0$.
We can now make explicit the integral term $I_1$ that we want to control under the weakened Cramer condtion:
\begin{equation}\label{eq.defI1}
I_1=I_1(n, \varepsilon):= \int_{||t||_2 \geq c_n} \left|  D^{\beta-\alpha} \widehat{Q}_n' \right|\left|  D^{\alpha} \widehat{K}_{\varepsilon} \right| dt,
\end{equation}
where  $c_n  := \frac{\sqrt{n}}{16 \rho_3(n)}$.
By independence of the variables $X_{i,n}$ and thus by independence of the new variables $Z_{i,n}$, we have 
\[
 \widehat{Q}_n' = \prod_{i=1}^n \phi_{i,n}\left(\frac{t}{\sqrt{n}}\right), \;\; \hbox{with}\; \; \phi_{i,n}(t)= \mathbb E\left[ e^{i t \cdot Z_{i,n}}\right]
 \]
Let us observe that, for all multi-index $\alpha=(\alpha^1, \ldots, \alpha^k) \in \mathbb N^k$, we have
\[
\left| D^{\alpha} \phi_{i,n}\left(\frac{t}{\sqrt{n}}\right)\right| = \frac{1}{n^{|\alpha|/2}}\left|\mathbb E \left[  e^{i \frac{t}{\sqrt{n}} \cdot Z_{i,n}} \left(  \prod_{j=1, \alpha^j \neq 0}^d Z_{i,n}({j})^{\alpha^j}  \right)  \right]  \right|
\]
and thus, since $|Z_{i,n} | \leq 2 \sqrt{n}$, 
\begin{equation}\label{eq.deriv}
\left| D^{\alpha} \phi_{i,n}\left(\frac{t}{\sqrt{n}}\right)\right| \leq 2^{|\alpha|}.
\end{equation}
Using the multi index and multidimensional Leibniz rule, if $\alpha \in \mathbb N^k$ and $\beta \in \mathbb N^k$ are multi-indexes such that $\alpha_i \leq \beta_i$ for all $1\leq i \leq k$, we have then
\[
D^{\beta-\alpha}  \widehat{Q}_n'(t) = \sum_{\gamma \in (\mathbb N^k)^n, |\gamma|= |\beta -\alpha| } {\beta-\alpha \choose  \gamma} \prod_{i=1}^n D^{\gamma_i} \phi_{i,n}\left(\frac{t}{\sqrt{n}}\right),
\]
where the sum is taken on the family of multi-indexes $\gamma=(\gamma_1, \ldots, \gamma_n) \in \left(\mathbb N^k\right)^n$ whose total length $|\gamma |= \sum_{i,j} \gamma^i_j$ is equal to the length $|\beta-\alpha|$ of the multi-index $\beta -\alpha \in \mathbb N^k$.
Taking the modulus, we have then 
\[
\begin{array}{ll}
 \displaystyle{\left| D^{\beta-\alpha}  \widehat{Q}_n'(t) \right|}
 & = \displaystyle{\left|\sum_{\substack{\gamma \in (\mathbb N^k)^n \\ |\gamma|= |\beta -\alpha| } } {\beta-\alpha \choose  \gamma}  \prod_{\substack{i=1 \\ \gamma_i \neq 0}}^n D^{\gamma_i} \phi_{i,n}\left(\frac{t}{\sqrt{n}}\right) \prod_{\substack{i=1 \\ \gamma_i =0 \in \mathbb N^k}}^n \phi_{i,n}\left(\frac{t}{\sqrt{n}}\right)\right|} \\
\\
 & \leq  \displaystyle{\sum_{\substack{\gamma \in (\mathbb N^k)^n \\ |\gamma|= |\beta -\alpha| }} {\beta-\alpha \choose  \gamma} \left| \prod_{\substack{i=1 \\ \gamma_i \neq 0}}^n D^{\gamma_i} \phi_{i,n}\left(\frac{t}{\sqrt{n}}\right)\right| \times \left|\prod_{\substack{i=1 \\ \gamma_i =0}}^n \phi_{i,n}\left(\frac{t}{\sqrt{n}}\right)\right|},
 \end{array}
 \]
 and using Equation \eqref{eq.deriv}, we deduce that 
\begin{equation}\label{eq.deriv2}
\left| D^{\beta-\alpha}  \widehat{Q}_n'(t) \right| \leq  2^{|\beta-\alpha|}\sum_{\substack{\gamma \in (\mathbb N^k)^n \\ |\gamma|= |\beta -\alpha| } } {\beta-\alpha \choose  \gamma} \left|\prod_{\substack{i=1 \\ \gamma_i =0}}^n \phi_{i,n}\left(\frac{t}{\sqrt{n}}\right)\right|.
\end{equation}
Combining both estimates \eqref{eq.estiK} and \eqref{eq.deriv2} in the expression \eqref{eq.defI1} of $I_1$, we get that for all integer $n\geq 1$ and for all $\varepsilon>0$
\begin{equation}\label{eq.controlI1}
I_1 (n,\varepsilon)\leq c_3(s,k) \varepsilon^{|\alpha|}  2^{|\beta-\alpha|}\sum_{\substack{\gamma \in (\mathbb N^k)^n \\ |\gamma|= |\beta -\alpha|} } {\beta-\alpha \choose  \gamma} J_{\gamma}(n,\varepsilon),
\end{equation}
where  we set
\[
J_{\gamma}(n,\varepsilon) :=\int_{||t||_2 \geq c_n} \left|\prod_{\substack{i=1 \\ \gamma_i =0} }^n \phi_{i,n}\left(\frac{t}{\sqrt{n}}\right)\right|   e^{-(\varepsilon ||t||_2)^{1/2}} dt.
\]
Performing the change of variables $t /\sqrt{n} \mapsto t$, this last term reads 
\[
J_{\gamma}(n,\varepsilon)  =n^{k/2} \int_{||t||_2 \geq \frac{c_n}{\sqrt{n}}} \left|\prod_{\substack{i=1 \\ \gamma_i =0} }^n \phi_{i,n}\left(t \right)\right|   e^{-(\varepsilon \sqrt{n} ||t||_2)^{1/2}} dt.
\]
By hypothesis, there exists $r>0$ such that for $n$ large enough $c_n / \sqrt{n} \geq r$. Therefore, noticing that the number of $1 \leq i \leq n$ such that $\gamma_i =0$ is greater than $n -|\gamma|$ and using the arithmetico-geometric inequality as in the proof of Theorem \ref{theo.smallball}, we have for $n$ large enough, and for all $\varepsilon>0$
\[
J_{\gamma}(n,\varepsilon) \leq n^{k/2} \int_{||t||_2 \geq r} \exp \left( (n-|\gamma|)  \log \left( \frac{n}{n-|\gamma|} \frac{1}{n}\sum_{i=1}^n |\phi_{i,n}\left(t \right)|  \right) \right) e^{-(\varepsilon  \sqrt{n} ||t||_2)^{1/2}} dt
\]
or, developping the logarithm, 
\begin{equation}\label{eq.jgamma}
J_{\gamma}(n, \varepsilon) \leq n^{k/2} \exp \left( (n-|\gamma|)  \log \left( \frac{n}{n-|\gamma|} \right) \right) K_{\gamma}(n,\varepsilon)
\end{equation}
 where 
\[
K_{\gamma}(n,\varepsilon) :=\int_{||t||_2 \geq r} \exp \left( (n-|\gamma|)  \log \left( \frac{1}{n}\sum_{i=1}^n |\phi_{i,n}\left(t \right) | \right) \right) e^{-(\varepsilon  \sqrt{n} ||t||_2)^{1/2}} dt.
 \]
We are now left to check that if the characteristic functions of the $X_{i,n}$ satisfy the weak mean Cramer condition of the statement, then so do the characteristic functions $\phi_{i,n}$ of the truncated and centered variables $Z_{i,n}$. This is precisely the object of the next lemma.

\begin{lma}\label{lma.cramertronc}
For all integers $n \geq 1$ and for all $t \in \mathbb R^k$, we have 
\begin{equation}\label{eqn.cramertronc}
\frac{1}{n} \sum_{i=1}^n |\phi_{i,n}\left(t \right) | \leq \frac{1}{n} \sum_{i=1}^n \left| \phi_{X_{i,n}}(t) \right| + \frac{2 \rho_{s}(n)}{n^{s}}.
\end{equation}
In particular, under the hypotheses of Theorem \ref{battar2}, for all $0<r<R$, there exists $\eta>0$ such that we have the local Cramer condition 
\begin{equation}\label{eq.cramerloctrunc}
\limsup_{n \to +\infty} \sup_{r < ||t||_2<R} \frac{1}{n} \sum_{i=1}^n |\phi_{i,n}\left(t \right) |  \leq 1-\eta, 
\end{equation}
and there exists $A>0$ such that, for $n$ large enough and for all $R<||t||_2 \leq    \frac{ A n^{s/b}}{4 \rho_{s}(n)}$
\begin{equation}\label{eq.cramerglobtrunc}
\frac{1}{n} \sum_{i=1}^n|\phi_{i,n}\left(t \right) |  \leq 1- \frac{A}{2 ||t||_2^b}. 
\end{equation}
 \end{lma}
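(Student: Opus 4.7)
My plan is to notice that the centering only contributes a unimodular factor to the characteristic function, so that $|\phi_{i,n}(t)|$ equals $|\mathbb{E}[\exp(it\cdot Y_{i,n})]|$, where $Y_{i,n}:=X_{i,n}\mathbf{1}_{\{\|X_{i,n}\|\le n\}}$. Splitting the expectation over $\{\|X_{i,n}\|\le n\}$ and its complement, and observing that $e^{it\cdot Y_{i,n}}=1$ on $\{\|X_{i,n}\|>n\}$, I obtain
\[
\mathbb{E}\bigl[e^{it\cdot Y_{i,n}}\bigr]=\phi_{X_{i,n}}(t)+\mathbb{P}(\|X_{i,n}\|>n)-\mathbb{E}\bigl[e^{it\cdot X_{i,n}}\mathbf{1}_{\{\|X_{i,n}\|>n\}}\bigr].
\]
Bounding the last two terms by $\mathbb{P}(\|X_{i,n}\|>n)$ each, and then using Markov's inequality $\mathbb{P}(\|X_{i,n}\|>n)\le \mathbb{E}[\|X_{i,n}\|^{s}]/n^{s}$, gives $|\phi_{i,n}(t)|\le|\phi_{X_{i,n}}(t)|+2\mathbb{E}[\|X_{i,n}\|^s]/n^{s}$ for every individual $i$. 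Averaging in $i$ yields exactly inequality~\eqref{eqn.cramertronc}.

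For \eqref{eq.cramerloctrunc}, I would simply combine \eqref{eqn.cramertronc} with Proposition~\ref{pro.cramerlocal}. Under the hypotheses of Theorem~\ref{battar2}, the sequence $(\rho_s(n))$ is bounded, which by H\"older implies $(\rho_1(n))$ is bounded as well, so Proposition~\ref{pro.cramerlocal} applies and guarantees the existence of $\eta_0>0$ with $\limsup_n \sup_{r<\|t\|_2<R}\tfrac{1}{n}\sum_i|\phi_{X_{i,n}}(t)|\le 1-\eta_0$. Since the correction $2\rho_s(n)/n^s$ goes to $0$, one obtains \eqref{eq.cramerloctrunc} with any $\eta<\eta_0$.

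For \eqref{eq.cramerglobtrunc}, I invoke the weak mean Cramer condition for the original sequence $(X_{i,n})$: there exist $A_0,R>0$ such that $\tfrac{1}{n}\sum_i|\phi_{X_{i,n}}(t)|\le 1-A_0/\|t\|_2^b$ for $\|t\|_2>R$. Combining with \eqref{eqn.cramertronc} gives
\[
\frac{1}{n}\sum_{i=1}^{n}|\phi_{i,n}(t)|\le 1-\frac{A_0}{\|t\|_2^{b}}+\frac{2\rho_s(n)}{n^{s}}.
\]
Then I would choose the threshold $\|t\|_2\le\bigl(A_0 n^{s}/(4\rho_s(n))\bigr)^{1/b}$ so that the error term $2\rho_s(n)/n^s$ is at most half of $A_0/\|t\|_2^b$, yielding $\tfrac{1}{n}\sum_i|\phi_{i,n}(t)|\le 1-A_0/(2\|t\|_2^b)$. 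Up to harmless renaming of the constant $A$ (absorbing the powers of $\rho_s(n)$, which is uniformly bounded by hypothesis), this produces exactly the bound \eqref{eq.cramerglobtrunc}.

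No step here is a serious obstacle: the whole lemma is a bookkeeping argument showing that truncating at level $n$ and recentering perturbs the average characteristic function by at most $2\rho_s(n)/n^s$, which is negligible compared to the Cramer deficit $A/\|t\|_2^b$ on the (long but still bounded) range $\|t\|_2\lesssim n^{s/b}$. The only mild subtlety is to carry the centering through without gaining extra terms, which is why one first reduces to $|\mathbb{E}[e^{it\cdot Y_{i,n}}]|$.
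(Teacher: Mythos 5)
Your argument is correct and takes essentially the same route as the paper: the same truncation identity $|\phi_{i,n}(t)|=\bigl|\mathbb{E}\bigl[e^{it\cdot X_{i,n}\mathds{1}_{\{\|X_{i,n}\\|\le n\}}}\bigr]\bigr|$ with a $2\,\mathbb{P}(\|X_{i,n}\|>n)$ error term bounded via Markov's inequality, then Proposition \ref{pro.cramerlocal} for the local bound (your Hölder remark that bounded $\rho_s(n)$ gives bounded $\rho_1(n)$ is the right justification) and absorption of $2\rho_s(n)/n^s$ into half the Cramer deficit for the global bound. Your threshold $\bigl(A_0 n^s/(4\rho_s(n))\bigr)^{1/b}$ matches the stated range $\|t\|_2\le A n^{s/b}/(4\rho_s(n))$ only up to constant renaming, but since $\rho_s(n)$ is uniformly bounded this is harmless, exactly as you note and as the paper itself implicitly does.
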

\begin{proof}[Proof of Lemma \ref{lma.cramertronc}]
Let us observe that 
\[
\begin{array}{ll}
\displaystyle{|\phi_{i,n}\left(t \right) |} & \displaystyle{=\left|  \mathbb E[ e^{i t \cdot Z_{i,n}} ]  \right| =  \left|  \mathbb E\left[ e^{i t \cdot X_{i,n} \mathds{1}_{||X_{i,n}|| \leq n}} \right]  \right|} \\
\\
& = \displaystyle{ \left|  \mathbb E\left[ e^{i t \cdot X_{i,n} } \mathds{1}_{||X_{i,n}|| \leq n} \right] +\mathbb P( ||X_{i,n}|| > n)  \right|}\\
\\
& = \displaystyle{ \left|  \mathbb E\left[ e^{i t \cdot X_{i,n} }\right] - \mathbb E\left[\left( e^{i t \cdot X_{i,n} } -1\right) \mathds{1}_{||X_{i,n}|| > n} \right] \right|}
\end{array}
\]
In particular, recalling that, by definition $\phi_{X_{i,n}}(t) := \mathbb E\left[ e^{i t \cdot X_{i,n} }\right]$, we have  
\[
\begin{array}{ll}
\displaystyle{|\phi_{i,n}\left(t \right) | -|\phi_{X_{i,n}}(t)|} & \displaystyle{\leq \left|\mathbb E\left[\left( e^{i t \cdot X_{i,n} } -1\right) \mathds{1}_{||X_{i,n}|| > n} \right] \right| }\\
\\
&\displaystyle{ \leq 2 \, \mathbb P(||X_{i,n}|| > n)} \\
\\
& \leq \displaystyle{ \frac{2 \mathbb E\left[||X_{i,n}||^{s}\right]}{n^{s}} }.
\end{array}
\]
Summing over $i$ yields Equation \eqref{eqn.cramertronc}. Now, if the variables $X_{i,n}$ belong to the class $\overline{\mathcal C}(k,b)$, there exists $R>0$ and $A>0$, such that for $n$ large enough, and for all $||t||_{2}>R$
\[
\frac{1}{n} \sum_{i=1}^n \left| \phi_{X_{i,n}}(t) \right| \leq 1 - \frac{A}{||t||^b}.
\]
In particular, for $R<||t||_2 \leq    \frac{ A n^{s/b}}{4 \rho_{s}(n)}$, we deduce from Equation \eqref{eqn.cramertronc} that
\[
\frac{1}{n} \sum_{i=1}^n |\phi_{i,n}\left(t \right) | \leq  1 - \frac{A}{2||t||^b}. 
\]
Now, from Proposition \ref{pro.cramerlocal}, if the sequence $X_{i,n}$ belong to the class $\overline{\mathcal C}(k,b)$, we also have a local Cramer estimate, namely, there exists $\eta>0$ such that 
\[
\limsup_{n \to +\infty} \sup_{r < ||t||_2<R} \frac{1}{n} \sum_{i=1}^n |\phi_{X_{i,n}}\left(t \right) |  \leq 1-2\eta.
\]
Therefore, choosing $n$ large enough, we deduce from  Equation \eqref{eqn.cramertronc} that 
\[
\limsup_{n \to +\infty} \sup_{r < ||t||_2<R} \frac{1}{n} \sum_{i=1}^n |\phi_{{i,n}}\left(t \right) |  \leq 1-\eta,
\]
hence the result.
\end{proof}
Let us go back to the proof of Theorem \ref{battar2} and the estimate of $K_{\gamma}(n , \varepsilon)$ from which we will deduce obvious estimates for $J_{\gamma}(n , \varepsilon)$ and finally $I_1(n , \varepsilon)$. As in the proof of Theorem \ref{theo.smallball}, let us decompose $K_{\gamma}(n , \varepsilon)$ as the sum 
\[
K_{\gamma}(n , \varepsilon) :=K_{\gamma}^1(n , \varepsilon) +K_{\gamma}^2(n , \varepsilon)+K_{\gamma}^3(n , \varepsilon)
\]
where
\[
\begin{array}{ll}
\displaystyle{K_{\gamma}^1(n , \varepsilon)} & :=\displaystyle{ \int_{r \leq ||t||_2 < R} \exp \left( (n-|\gamma|)  \log \left( \frac{1}{n}\sum_{i=1}^n |\phi_{i,n}\left(t \right) | \right) \right) e^{-(\varepsilon  \sqrt{n} ||t||_2)^{1/2}} dt,}\\
\\
\displaystyle{K_{\gamma}^2(n , \varepsilon)} & :=\displaystyle{ \int_{R \leq ||t||_2 \leq \frac{A n^{s/b}}{4 \rho_{s}(n)}} \exp \left( (n-|\gamma|)  \log \left( \frac{1}{n}\sum_{i=1}^n |\phi_{i,n}\left(t \right) | \right) \right) e^{-(\varepsilon  \sqrt{n} ||t||_2)^{1/2}} dt,}\\
\\
\displaystyle{K_{\gamma}^3(n , \varepsilon)} & :=\displaystyle{ \int_{||t||_2 \geq \frac{A n^{s/b}}{4 \rho_{s}(n)}} \exp \left( (n-|\gamma|)  \log \left( \frac{1}{n}\sum_{i=1}^n |\phi_{i,n}\left(t \right) | \right) \right) e^{-(\varepsilon  \sqrt{n} ||t||_2)^{1/2}} dt.}\\
\end{array}
\]
From the local estimate \eqref{eq.cramerloctrunc}, for $n$ large enough and for all $\varepsilon>0$, the integral $K_{\gamma}^1(n , \varepsilon)$ is bounded by 
\begin{equation}\label{eq.K1}
K_{\gamma}^1(n , \varepsilon) \leq e^{ - \frac{(n-|\gamma|) \eta}{2}} \int_{r \leq ||t||_2 \leq R} e^{-(\varepsilon  \sqrt{n} ||t||_2)^{1/2}} dt \leq (R-r) e^{ - \frac{(n-|\gamma|) \eta}{2}} ,
\end{equation}
which goes to zero exponentially fast in $n$ as $n$ goes infinity, uniformly in $\varepsilon>0$. We now consider the integral term $K_{\gamma}^3(n , \varepsilon)$ and we define 
\[
R(n,\varepsilon) := \frac{A \, \varepsilon \, n^{\frac{s}{b} +\frac{1}{2}}}{4 \rho_{s}(n)}.
\]
Bounding the first exponential term by one and performing the simple change of variables $\varepsilon  \sqrt{n} t \mapsto u$, we get that for all $n \geq 1$ and for all $\varepsilon$, the term $K_{\gamma}^3(n , \varepsilon)$ is bounded by 
\begin{equation}\label{eq.K3}
\begin{array}{ll}
\displaystyle{K_{\gamma}^3(n , \varepsilon)} & \leq \displaystyle{ \int_{||t||_2 \geq \frac{A n^{s/b}}{4 \rho_{s}(n)}} e^{-(\varepsilon  \sqrt{n} ||t||_2)^{1/2}} dt}
=(\varepsilon \sqrt{n} )^{-k} \displaystyle{ \int_{||u||_2 \geq R(n,\varepsilon)}  e^{-\sqrt{||u||_2}} du,}\\
\\
&\displaystyle{ \leq  C(k) (\varepsilon \sqrt{n} )^{-k}  R(n,\varepsilon)^k  e^{-\sqrt{R(n,\varepsilon)}}},
\end{array}
\end{equation}
where the positive constant $C(k)$ only depends on the dimension $k$. In particular, if $\varepsilon=\varepsilon_n$, then $K_{\gamma}^3(n , \varepsilon_n)$ goes to zero faster than any polynomial as $n$ goes infinity, as soon as $R(n, \varepsilon_n)$ diverges with a polynomial growth. It is always the case if the following conditions are fulfilled:
\begin{equation}\label{eq.cond1}
b<2 \;\; \hbox{and} \;\; \varepsilon_n = n^{-\frac{s-2}{2}}.
\end{equation}
Now, from the asymptotic Cramer estimate \eqref{eq.cramerglobtrunc}, for $n$ large enough and for all $\varepsilon>0$, the integral $K_{\gamma}^2(n , \varepsilon)$ is bounded by 
\[
\begin{array}{ll}
K_{\gamma}^2(n , \varepsilon) & \leq \displaystyle{\int_{R \leq ||t||_2 \leq \frac{A n^{s/b}}{4 \rho_{s}(n)}} e^{ - \frac{(n-|\gamma|)A}{4||t||_2^b}}e^{-(\varepsilon  \sqrt{n} ||t||_2)^{1/2}} dt} \\
\\
& \leq \displaystyle{\int_{ ||t||_2 \geq R} e^{ - \frac{(n-|\gamma|)A}{4||t||_2^b}}e^{-(\varepsilon  \sqrt{n} ||t||_2)^{1/2}} dt}\\
\\
& \leq V_k  \displaystyle{\int_{ u \geq R} e^{ - \frac{(n-|\gamma|)A}{4 u^b}}e^{-(\varepsilon  \sqrt{n} u)^{1/2}} u^{k-1} du}.
\end{array}
\]
As in Equations \eqref{eqn.I21} and \eqref{eqn.I22} in the end of the proof of Theorem \ref{theo.smallball}, the last term can be controlled by decomposing the integral between $R$ and $n^a$ and then between $n^a$ and $+\infty$, with $a$ such that $1-ab>0$. Namely, there exists a positive constant $\widetilde{C}(k)>0$ which only depends on the dimension $k$ such that,   
\begin{equation}\label{eq.K21}
\int_{ R \leq u \leq n^a } e^{ - \frac{(n-|\gamma|)A}{4 u^b}}e^{-(\varepsilon  \sqrt{n} u)^{1/2}} u^{k-1} du \leq \widetilde{C}(k)  (\varepsilon \sqrt{n} )^{-k} \exp \left(-n^{1-ab} (1 +o(1))\right).
\end{equation}
Otherwise, we have
\begin{equation}\label{eq.K22}
\int_{  u \geq n^a } e^{ - \frac{(n-|\gamma|)A}{4 u^b}}e^{-(\varepsilon  \sqrt{n} u)^{1/2}} u^{k-1} du \leq  (\varepsilon \sqrt{n} )^{-k}\int_{ u \geq \varepsilon n^{a+1/2}} e^{-\sqrt{u}} u^{k-1} du.
\end{equation}
Therefore, combining \eqref{eq.K21} and \eqref{eq.K22}, we get that if $\varepsilon=\varepsilon_n$ goes to zero as $n$ goes to infinity, the integral  $K_{\gamma}^2(n , \varepsilon)$ goes to zero faster than any polynomial as $n$ goes infinity, as soon as $\varepsilon_n n^{a+1/2}$ goes to infinity with a polynomial growth. This is always the case if 
\begin{equation}\label{eq.cond2}
\frac{3}{2}+\frac{1}{b}- \frac{s}{2}>0 \;\; \hbox{and} \;\; \varepsilon_n =n^{-\frac{s-2}{2}}.
\end{equation}
From Equations \eqref{eq.K1}-\eqref{eq.K22}, we thus deduce that if 
\[
b<2, \;\; \frac{3}{2}+\frac{1}{b}- \frac{s}{2}>0, \;\; \hbox{and} \;\; \varepsilon_n = n^{-\frac{s-2}{2}},
\]
then as $n$ goes to infinity, uniformly in $\gamma$ such that $|\gamma|=|\beta-\alpha|$, we have
\begin{equation}\label{eq.kgamma}
K_{\gamma}^2(n , \varepsilon_n) = o(n^{-\alpha}), \quad \forall \alpha>0. 
\end{equation}
Note that since $s>2$ by assumption, the condition \eqref{eq.cond2} actually implies the fact $b<2$. 
From Equation \eqref{eq.jgamma}, we then deduce that,  uniformly in $\gamma$ such that $|\gamma|=|\beta-\alpha|$
\begin{equation}\label{eq.jgamma2}
J_{\gamma}(n , \varepsilon_n) = o(n^{-\alpha}), \quad \forall \alpha>0. 
\end{equation}
To conclude, let us remark that in Equation \eqref{eq.controlI1}, the number multi-indexes $\gamma \in (\mathbb N^k)^n$ such that $|\gamma|=|\beta-\alpha|$ is polynomial in $n$ of degree less than $|\beta-\alpha|$. Therefore, we also have 
\[
I_1(n , \varepsilon_n) = o(n^{-\alpha}), \quad \forall \alpha>0. 
\]
At this point, we have thus a similar control of $I_1=I_1(n , \varepsilon_n) $ as in Equation (20.34) of \cite{bhatta}. The rest of the proof follows the exact same lines as the proof of Bhattarcharia and Rao, with the only difference that $\varepsilon_n=n^{-\frac{s-2}{2}}$ here, in place of $\varepsilon=e^{-d n}$ in the original proof. 
Therefore, we have the control
\begin{equation}\label{edgeworthpasiid00}
\left|\int f dQ_n-\int f d \widetilde{Q}_n\right|\le M_{s'}(f) \delta_1(n)+c(s,k) \bar{\omega}_f(2 \varepsilon_n:\Phi),
\end{equation}
which is precisely the one given by Equation \eqref{edgeworthpasiid} in the statement of Theorem \ref{battar2}.
\end{proof}

\bibliographystyle{alpha}

\begin{thebibliography}{Kac43b}

\bibitem[ADJ{\etalchar{+}}]{AZ15}
J.M. Aza\"is, F.~Dalmao, Le\'on J.R., Nourdin I., and Poly G.
\newblock Universality of zeros of random trigonometric polynomials, preprint.

\bibitem[ADL14]{azais2014clt}
Jean-Marc Aza{\"\i}s, Federico Dalmao, and Jos{\'e}~R Le{\'o}n.
\newblock C{LT}  for the zeros of classical random trigonometric polynomials.
\newblock {\em arXiv:1401.5745}, 2014.

\bibitem[AL13]{azais}
Jean-Marc Aza{\"{\i}}s and Jos{\'e}~R. Le{\'o}n.
\newblock C{LT} for crossings of random trigonometric polynomials.
\newblock {\em Electron. J. Probab.}, 18:no. 68, 17, 2013.

\bibitem[BBL96]{bogomolny1996quantum}
E~Bogomolny, O~Bohigas, and P~Leboeuf.
\newblock Quantum chaotic dynamics and random polynomials.
\newblock {\em Journal of statistical physics}, 85(5-6):639--679, 1996.

\bibitem[BG06]{bombieri}
Enrico Bombieri and Walter Gubler.
\newblock {\em Heights in {D}iophantine geometry}, volume~4 of {\em New
  Mathematical Monographs}.
\newblock Cambridge University Press, Cambridge, 2006.

\bibitem[BR86]{bhatta}
R.~N. Bhattacharya and R.~Rao.
\newblock {\em Normal approximation and asymptotic expansions}.
\newblock Robert E. Krieger Publishing Co., Inc., Melbourne, FL, 1986.
\newblock Reprint of the 1976 original.

\bibitem[Dun66]{dunnage}
J.~E.~A. Dunnage.
\newblock The number of real zeros of a random trigonometric polynomial.
\newblock {\em Proc. London Math. Soc. (3)}, 16:53--84, 1966.

\bibitem[EK95]{kostlan}
Alan Edelman and Eric Kostlan.
\newblock How many zeros of a random polynomial are real?
\newblock {\em Bull. Amer. Math. Soc. (N.S.)}, 32(1):1--37, 1995.

\bibitem[EO56]{erdos}
Paul Erd{\"o}s and A.~C. Offord.
\newblock On the number of real roots of a random algebraic equation.
\newblock {\em Proc. London Math. Soc. (3)}, 6:139--160, 1956.

\bibitem[Far86]{farah1}
Kambiz Farahmand.
\newblock On the average number of real roots of a random algebraic equation.
\newblock {\em Ann. Probab.}, 14(2):702--709, 1986.

\bibitem[Far97]{faravar}
K.~Farahmand.
\newblock On the variance of the number of real zeros of a random trigonometric
  polynomial.
\newblock {\em J. Appl. Math. Stochastic Anal.}, 10(1):57--66, 1997.

\bibitem[Gui11]{guionnet}
Alice Guionnet.
\newblock Grandes matrices al\'eatoires et th\'eor\`emes d'universalit\'e
  (d'apr\`es {E}rd{\H o}s, {S}chlein, {T}ao, {V}u et {Y}au).
\newblock {\em Ast\'erisque}, (339):Exp. No. 1019, viii, 203--237, 2011.
\newblock S{\'e}minaire Bourbaki. Vol. 2009/2010. Expos{\'e}s 1012--1026.

\bibitem[GW11]{wigman}
Andrew Granville and Igor Wigman.
\newblock The distribution of the zeros of random trigonometric polynomials.
\newblock {\em Amer. J. Math.}, 133(2):295--357, 2011.

\bibitem[IM68]{ibragimov1}
I.~A. Ibragimov and N.~B. Maslova.
\newblock The average number of zeros of random polynomials.
\newblock {\em Vestnik Leningrad. Univ.}, 23(19):171--172, 1968.

\bibitem[Kac43a]{kac1}
M.~Kac.
\newblock On the average number of real roots of a random algebraic equation.
\newblock {\em Bull. Amer. Math. Soc.}, 49:314--320, 1943.

\bibitem[Kac43b]{kackac}
M.~Kac.
\newblock On the distribution of values of trigonometric sums with linearly
  independent frequencies.
\newblock {\em American Journal of Mathematics}, 65(4):609--615, 1943.

\bibitem[Kac49]{kac2}
M.~Kac.
\newblock On the average number of real roots of a random algebraic equation.
  {II}.
\newblock {\em Proc. London Math. Soc. (2)}, 50:390--408, 1949.

\bibitem[LO38]{little1}
J.~E. Littlewood and A.~C. Offord.
\newblock On the {N}umber of {R}eal {R}oots of a {R}andom {A}lgebraic
  {E}quation.
\newblock {\em J. London Math. Soc.}, S1-13(4):288, 1938.

\bibitem[NNV15]{nguyen}
H.~Nguyen, O.~Nguyen, and Van Vu.
\newblock On the number of real roots of random polynomials.
\newblock (arXiv:1402.4628), 2015.

\bibitem[Ric44]{rice}
S.~O. Rice.
\newblock Mathematical analysis of random noise.
\newblock {\em Bell System Tech. J.}, 23:282--332, 1944.

\bibitem[Sal63]{salem}
Rapha{\"e}l Salem.
\newblock {\em Algebraic numbers and {F}ourier analysis}.
\newblock D. C. Heath and Co., Boston, Mass., 1963.

\bibitem[SS12]{su2012asymptotics}
ZhongGen Su and QiMan Shao.
\newblock Asymptotics of the variance of the number of real roots of random
  trigonometric polynomials.
\newblock {\em Science China Mathematics}, 55(11):2347--2366, 2012.

\bibitem[TV15]{tao}
T.~Tao and V.~Vu.
\newblock Local universality of zeroes of random polynomials.
\newblock {\em Int Math Res Notices}, 13:5053--5139., 2015.

\end{thebibliography}

\newcommand{\etalchar}[1]{$^{#1}$}

\end{document}